\numberwithin{equation}{section} 
\titleformat{\subsection}[runin]{\normalsize\bfseries}{\thesubsection}{5pt}{}
\newcommand{\moment}{
\textup{\textbf{J}}
}
\author{Mathieu Molitor
\\
\small{\it{e-mail:}}\,\,\url{pergame.mathieu@gmail.com}}
\title{Geometric spectral theory for K\"{a}hler functions}
\date{}
\begin{document}

\theoremstyle{definition}
\newtheorem{lemma}{Lemma}[section]
\newtheorem{definition}[lemma]{Definition}
\newtheorem{proposition}[lemma]{Proposition}
\newtheorem{corollary}[lemma]{Corollary}
\newtheorem{theorem}[lemma]{Theorem}
\newtheorem{remark}[lemma]{Remark}
\newtheorem{example}[lemma]{Example}
\newtheorem{setting}[lemma]{Setting }

\bibliographystyle{alpha}

\maketitle 


\begin{abstract}
	We consider K\"{a}hler toric manifolds $N$ that are torifications of statistical manifolds $\mathcal{E}$ in the sense of 
	$\cite{molitor-toric}$, and prove a geometric analogue of the spectral decomposition theorem in which Hermitian matrices 
	are replaced by K\"{a}hler functions on $N$. The notion of ``spectrum" of a K\"{a}hler function is defined, 
	and examples are presented. This paper is motivated by the geometrization program 
	of Quantum Mechanics that we pursued in previous 
	works \cite{Molitor2012,Molitor-exponential, Molitor2014,Molitor-2015,molitor-toric,molitor-weyl}.
\end{abstract}




\section{Introduction}

%

	Let $\mathbb{P}_{n}(c)$ be the complex projective space of complex dimension $n$ and 
	holomorphic sectional curvature $c>0$, and let $\mathcal{P}_{n+1}$
	be the set of probability functions $p$ defined on a finite set $\Omega=\{x_{1},...,x_{n+1}\}$. (Thus a function $p:\Omega\to \mathbb{R}$ is 
	in $\mathcal{P}_{n+1}$ if and only if $p(x)\geq 0$ for all $x\in\Omega$ and $\sum_{x\in \Omega}p(x)=1$.) 
	Let $K:\mathbb{P}_{n}(c)\to \mathcal{P}_{n+1}$ be the function defined by 
	\begin{eqnarray}\label{nfeknkefnkn}
		K([z])(x_{k})=\dfrac{|z_{k}|^{2}}{|z_{1}|^{2}+...+|z_{n+1}|^{2}}, 
	\end{eqnarray}
	where $[z]=[z_{1},...,z_{n+1}]\in \mathbb{P}_{n}(c)$ (homogeneous coordinates) and $x_{k}\in \Omega$. 
	Recall that a real-valued function $f$ on a K\"{a}hler manifold is said to be \textit{K\"{a}hler} if its 
	Hamiltonian vector field $X_{f}$ is a Killing vector field. One of the main objectives of this paper is 
	to generalize the following result. 

\begin{proposition}\label{nknefknkdnknk}
	A smooth function $f:\mathbb{P}_{n}(c)\to \mathbb{R}$ is K\"{a}hler 
	if and only if there exist a random variable $X:\Omega\to \mathbb{R}$ and a holomorphic isometry 
	$\phi$ of $\mathbb{P}_{n}(c)$ such that 
	\begin{eqnarray}\label{nfeknkefnkndfefdfk}
		f(p)=\mathbb{E}_{K(\phi(p))}(X)
	\end{eqnarray}
	for all $p\in \mathbb{P}_{n}(c)$, where the right-hand side of \eqref{nfeknkefnkndfefdfk} is the expectation of $X$ 
	with respect to the probability function $K(\phi(p))$. 
\end{proposition}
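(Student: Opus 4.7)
The plan is to reduce the proposition to the ordinary spectral theorem for Hermitian matrices via the classical identification of Kähler functions on $\mathbb{P}_{n}(c)$ with expectation functionals of Hermitian operators on $\mathbb{C}^{n+1}$. Explicitly, one first establishes the \emph{ansatz} that $f:\mathbb{P}_{n}(c)\to \mathbb{R}$ is Kähler if and only if there is a Hermitian matrix $A\in \mathrm{Herm}(n+1)$ with
\begin{equation*}
f([z])\,=\,\langle A\rangle([z])\;:=\;\frac{\langle z,Az\rangle}{\langle z,z\rangle}.
\end{equation*}
This is standard: the full isometry group of $\mathbb{P}_{n}(c)$ is $PU(n+1)$, the Killing vector fields are the fundamental vector fields of this action, and they are precisely the Hamiltonian vector fields of the expectation functions $\langle A\rangle$ via the moment map of $U(n+1)\curvearrowright \mathbb{P}_{n}(c)$. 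If this fact is not available directly from earlier work cited in the paper, I would prove it by showing that the Hamiltonian flow of $\langle A\rangle$ is $[z]\mapsto [e^{itA}z]$, manifestly a one-parameter group of holomorphic isometries, and conversely that any Killing vector field arises this way since $\mathfrak{u}(n+1)$ spans $\mathrm{Herm}(n+1)$ up to a factor of $i$.

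For the easy implication $(\Leftarrow)$, given $X$ and $\phi$, set $D=\operatorname{diag}\bigl(X(x_{1}),\ldots,X(x_{n+1})\bigr)$, which is Hermitian. A direct computation using \eqref{nfeknkefnkn} gives $\mathbb{E}_{K([z])}(X)=\langle D\rangle([z])$, so the unpulled function is Kähler. Because $\phi$ is a holomorphic isometry, it preserves both the symplectic form and the complex structure, so it sends Kähler functions to Kähler functions; hence $f=\langle D\rangle\circ \phi$ is Kähler.

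For the substantive implication $(\Rightarrow)$, assume $f$ is Kähler. By the \emph{ansatz} write $f=\langle A\rangle$ for some $A\in \mathrm{Herm}(n+1)$. Apply the spectral theorem to diagonalise $A=U^{*}DU$, where $U\in U(n+1)$ and $D$ is real diagonal. Define the random variable $X:\Omega\to \mathbb{R}$ by $X(x_{k}):=D_{kk}$ and let $\phi\in \mathrm{Isom}^{+}(\mathbb{P}_{n}(c))$ be the holomorphic isometry $[z]\mapsto [Uz]$ induced by $U$. Then
\begin{equation*}
\langle A\rangle([z])\,=\,\frac{\langle Uz,DUz\rangle}{\langle Uz,Uz\rangle}\,=\,\sum_{k=1}^{n+1}X(x_{k})\,\frac{|(Uz)_{k}|^{2}}{\|Uz\|^{2}}\,=\,\mathbb{E}_{K(\phi([z]))}(X),
\end{equation*}
proving the claim.

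The principal obstacle is the \emph{ansatz}: once it is in hand, the rest is a one-line application of the spectral theorem together with the (essentially tautological) observation that the map $K\circ \phi$ pulls the probability simplex back to the diagonal of the unitarily rotated density matrix. Care is needed only to handle additive constants, but since $\langle cI\rangle\equiv c$ any such constant is absorbed into the choice of $A$, so the correspondence is genuinely bijective and no boundary cases are lost.
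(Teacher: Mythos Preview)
Your proof is correct and follows exactly the route the paper itself sketches immediately after the statement: identify K\"{a}hler functions on $\mathbb{P}_{n}(c)$ with the expectation functions $f_{H}([z])=\tfrac{\langle z,Hz\rangle}{\langle z,z\rangle}$ of Hermitian matrices (the ``ansatz'' you cite), then apply the spectral decomposition $H=U^{*}DU$ to produce the random variable $X$ from the diagonal entries and the holomorphic isometry $\phi$ from $U$. Your write-up is in fact more detailed than the paper's one-paragraph justification, which simply invokes the bijection $H\mapsto f_{H}$ from \cite{Cirelli-Quantum} and says the spectral theorem ``carries over''.
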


	This result is just the spectral decomposition theorem for Hermitian matrices in a geometric and probabilistic guise. 
	Indeed, let $\textup{Herm}_{n}$ be the space of $n\times n$ Hermitian matrices. 
	Given $H\in \textup{Herm}_{n+1}$, let $f_{H}:\mathbb{P}_{n}(c)\to \mathbb{R}$ be the 
	function defined by $f_{H}([z])=\tfrac{\langle z,Hz\rangle}{\langle z,z\rangle}$, 
	where $\langle z,w\rangle=\overline{z}_{1}w_{1}+...+\overline{z}_{n+1}w_{n+1}$ 
	is the usual Hermitian product on $\mathbb{C}^{n+1}$. It is well-known that 
	the correspondence $H\mapsto f_{H}$ defines a bijection between $\textup{Herm}_{n+1}$ and the space 
	of K\"{a}hler functions on $\mathbb{P}_{n}(c)$ (see, e.g., \cite{Cirelli-Quantum}). Therefore, one can 
	carry over the usual spectral decomposition theorem for Hermitian 
	matrices to the space of K\"{a}hler functions on $\mathbb{P}_{n}(c)$, which yields Propositon \ref{nknefknkdnknk}. 

	Our generalization of Proposition \ref{nknefknkdnknk}, to be discussed below, is based on the concept of ``torification" of 
	a statistical manifold, that we introduced in \cite{molitor-toric}. 
	This refers to a geometric construction that associates to certain statistical manifolds, a K\"{a}hler manifold equipped with 
	a torus action $\Phi:\mathbb{T}^{n}\times N\to N$, where $n$ is the complex dimension of $N$ and 
	$\mathbb{T}^{n}=\mathbb{R}^{n}/\mathbb{Z}^{n}$ is the $n$-dimensional real torus. 
	For example, if $c=1$, then $\mathbb{P}_{n}(c)$ is the torification of
	$\mathcal{P}_{n+1}^{\times}:=\{p\in \mathcal{P}_{n+1}\,\,\vert\,\,p(x)>0\,\,\forall\,x\in \Omega\}$ (for the corresponding 
	torus action, see Section \ref{nknkefnknk} and Proposition \ref{nfeknwknwknk}).

	In general, a torification $N$ of a statistical manifold $\mathcal{E}$ comes equipped with a family of maps 
	$\kappa:N^{\circ}\to \mathcal{E}$, we call them \textit{compatible projections}, that are defined on the set $N^{\circ}$ 
	of points $p\in N$ where the torus action is free. Under suitable conditions, we prove (see Section \ref{ndknvfknkfnvknk}) 
	that each $\kappa$ extends uniquely to a continuous map $K:N\to \overline{\mathcal{E}}$, where 
	$\overline{\mathcal{E}}$ is an appropriate superset of $\mathcal{E}$ formed by probability functions. 
	The map \eqref{nfeknkefnkn} is an example of such extension.

	The statistical manifolds $\mathcal{E}$ of our generalization are \textit{exponential families} 
	(see Definition \ref{def:5.3}). They form an important class of statistical manifolds which are found
	among the most common probability distributions: Bernoulli, beta, binomial, chi-square, Dirichlet, exponential, gamma,
	geometric, multinomial, normal, Poisson, to name but just a few.  The data that are needed to define an exponential 
	family are a base measure on $\Omega$ and a finite set of random variables on $\Omega$. Using these random variables, 
	we contruct, for each exponential family $\mathcal{E}$, a finite dimensional vector space 
	of random variables on $\Omega$, that we denote by $\mathcal{A}_{\mathcal{E}}$ (see Section \ref{nfeqnkdekwnn}). 
	This space plays an important role in our generalization. For example, 
	if $\mathcal{E}=\mathcal{P}_{n+1}^{\times}$, then $\mathcal{A}_{\mathcal{E}}$ is the space of all random variables on 
	$\Omega=\{x_{1},...,x_{n+1}\}$. If $\mathcal{E}=\mathcal{B}(n)$ is the set of all binomial distributions 
	$p(k)=\binom{n}{k}q^{k}(1-q)^{n-k}$, $q\in (0,1)$,
	defined over $\Omega=\{0,1,...,n\}$, then $\mathcal{A}_{\mathcal{E}}$ is the set of functions $f:\{0,...,n\}\to \mathbb{R}$ 
	of the form $f(k)=\alpha\,k+\beta$, where $\alpha$ and $\beta$ are real numbers.

	Now we are ready to state our generalization (Theorem \ref{newnknfkenkndvknfk}). 
	Let $N$ be a toric K\"{a}hler manifold with real-analytic K\"{a}hler metric $g$. Let $\textup{Aut}(N,g)^{0}$ be 
	the identity component of the group of holomorphic isometries of $N$. Suppose that $N$ is a torification of 
	an exponential family $\mathcal{E}$ defined over a finite set $\Omega=\{x_{1},...,x_{m}\}$. Let $K:N\to \overline{\mathcal{E}}$ 
	be the extension of some compatible projection. Then a function $f:N\to \mathbb{R}$ is K\"{a}hler if and only if there are 
	a random variable 
	$X\in \mathcal{A}_{\mathcal{E}}$ and $\phi\in \textup{Aut}(N,g)^{0}$ such that 
	$f(p)=\mathbb{E}_{K(\phi(p))}(X)$ for all $p\in N$. 

	Let us look at an example. Let $\mathcal{E}=\mathcal{B}(n)$ be the set of binomial distributions 
	defined over $\Omega=\{0,1,...,n\}$. Then $N=\mathbb{P}_{1}(\tfrac{1}{n})$ is a torification of $\mathcal{B}(n)$. 
	The superset $\overline{\mathcal{B}(n)}$ is the disjoint union $\mathcal{B}(n)\cup\{\delta_{0},\delta_{n}\}$, 
	where $\delta_{i}:\Omega\to \mathbb{R}$ is defined by $\delta_{i}(j)=1$ if $i=j$, 0 otherwise. 
	For simplicity, let us identify $\mathbb{P}_{1}(\tfrac{1}{n})$ with the unite sphere $S^{2}\subset \mathbb{R}^{3}$ 
	endowed with $n$ times the round metric induced from $\mathbb{R}^{3}$. Then the map $K:S^{2}\to \overline{\mathcal{B}(n)}$ defined 
	by $K(x,y,z)(k)=2^{-n}\binom{n}{k}(1+z)^{k}(1-z)^{n-k}$, $(x,y,z)\in S^{2}$, $k\in\{0,1,...,n\}$, 
	is the extension of some compatible projection (by convention $0^{0}=1$). It is easy to see that 
	a function $f:S^{2}\to \mathbb{R}$ is K\"{a}hler 
	if and only if it is of the form $f(p)=\langle u,p\rangle+c$, where $u\in \mathbb{R}^{3}$, $c\in \mathbb{R}$, 
	and where $\langle\,,\,\rangle$ is the Euclidean product on $\mathbb{R}^{3}$. Our result says that a function
	$f$ on $S^{2}$ is K\"{a}hler if and only if there are a random variable $X:\Omega\to \mathbb{R}$ of the form 
	$X(k)=\alpha k+\beta$ ($\alpha,\beta\in \mathbb{R}$) and $U=(U_{ij})_{1\leq i,j\leq 3}\in SO(3)$ such that 
	\begin{eqnarray*}
		f(x,y,z)=\dfrac{1}{2^{n}}\sum_{k=0}^{n}\binom{n}{k}(\alpha k+\beta) (1+z')^{k}(1-z')^{n-k}
	\end{eqnarray*}
	for all $(x,y,z)\in S^{2}$, where $\binom{n}{k}=\tfrac{n!}{(n-k)!k!}$ and $z':=U_{31}x+U_{32}y+U_{33}z$. 

	The proof of our generalization uses group theoretical and symplectic arguments. The first step is to prove that if $N$ is a toric 
	K\"{a}hler manifold with momentum map $\moment:N\to \mathbb{R}^{n}$, then a smooth function 
	$f:N\to \mathbb{R}$ is K\"{a}hler if and only if there are $\xi\in \mathbb{R}^{n}$ (= Lie algebra of $\mathbb{T}^{n}$), 
	a holomorphic isometry $\phi$ of $N$ and $r\in \mathbb{R}$ such that 
	$f(p)=\moment^{\xi}(\phi(p))+r$ for all $p\in N$, where $\moment^{\xi}:N\to \mathbb{R}$ is defined by $\moment^{\xi}(p)
	=\langle \moment(p),\xi\rangle$ (here $\langle\,,\,\rangle$ is the Euclidean metric). This result follows from 
	the infinitesimal version of the classical conjugation Theorem of Cartan for maximal tori, 
	and the fact that the identity component of 
	the group of holomorphic isometries of $N$ is a compact connected Lie group whose natural action on $N$ is Hamiltonian (see 
	Section \ref{nenefkwnknefkwndk} and Proposition \ref{nenekfnsddnekn}). The second step uses the fact that $N$ is a 
	torification of $\mathcal{E}$. We show (Lemma \ref{nfeknkrkefnk} and Proposition \ref{nfeknekfnkn}) that the momentum map 
	$\moment:N\to \mathbb{R}^{n}$ associated to the torus action can be written as 
	$\moment(p)=(\mathbb{E}_{K(p)}(X_{1}),...,\mathbb{E}_{K(p)}(X_{n}))$, where $X_{1},...,X_{n}$ are random variables on $\Omega$ 
	and $K:N\to \overline{\mathcal{E}}$ is the extension of some compatible projection. Combining these two steps yields the generalization 
	of Proposition \ref{nknefknkdnknk}. 

	The second objective of this paper, which is motivated by our previous works on the quantum formalism 
	\cite{Molitor2012,Molitor-exponential, Molitor2014,Molitor-2015,molitor-toric,molitor-weyl},
	is to show that the image of $X$ in the decomposition $f(p)=\mathbb{E}_{K(\phi(p))}(X)$ 
	is independent of $K$, $\phi$ and $X$, allowing us to define what we call the \textit{spectrum} of a  K\"{a}hler function $f$ as the set 
	\begin{eqnarray*}
		\textup{spec}(f):=\textup{image of}\,\,X.
	\end{eqnarray*}
	This result is more delicate than the generalization of Proposition \ref{nknefknkdnknk}. The proof is a generalization of the 
	following phenomenom. Let $G=U(n)$ be the unitary group and let $T\subset U(n)$ be the subgroup of diagonal matrices. Then 
	$T$ is a maximal torus in $U(n)$; let $W$ be the Weyl group of $U(n)$ associated to $T$. It is well-known that 
	$W$ is isomorphic to the group $\mathbb{S}_{n}$ of permutations of $n$ objects, and that its natural action on $T$ 
	is by permutations of the diagonal entries. Now, if $A$ is a skew-Hermitian matrix, regarded as 
	an element of the Lie algebra $\mathfrak{u}(n)$ of $U(n)$, then there are $U\in U(n)$ and a diagonal matrix $D\in T$ such that 
	$A=\textup{Ad}_{U}D=UDU^{*}$, where $U^{*}$ is the conjugate of $U$ (this follows from the infinitesimal conjugation Theorem 
	of Cartan, see Section \ref{nkennkfenknk}). The decompositon $A=UDU^{*}$ is not unique, but if $A=UDU^{*}=U'D'(U')^{*}$, then 
	$D$ and $D'$ differ by a permutation of the diagonal entries; this follows from the fact 
	that the intersection of an adjoint orbit of a connected compact Lie group with the Lie algebra of a maximal torus is a Weyl orbit. 
	Because of this, one could \textit{define} the spectrum of $A$ as the set of diagonal entries of $D$; this would be consistant with 
	the standard definition.

	Our proof that $\textup{spec}(f)$ is well-defined is similar, and uses the Weyl group $W$ of 
	$G=\textup{Aut}(N,g)^{0}$. First we show that the space of random variables 
	$\mathcal{A}_{\mathcal{E}}$ associated to $\mathcal{E}$ is naturally isomorphic to $\mathbb{R}^{n}\oplus \mathbb{R}$, where $\mathbb{R}^{n}$ 
	is regarded as the Lie algebra of the torus. From this, we obtain that $W$ acts on $\mathcal{A}_{\mathcal{E}}$, 
	and that if a K\"{a}hler function $f:N\to \mathbb{R}$ satisfies 
	$f(p)=\mathbb{E}_{K(\phi(p))}(X)=\mathbb{E}_{K'(\phi'(p))}(X')$ for all $p\in N$, where $X,X'\in \mathcal{A}_{\mathcal{E}}$, 
	then there is $w\in W$ such that $X=w\cdot X'$ (see Section \ref{nekenrkefnknk}). Now, the crucial argument, proven in 
	\cite{molitor-weyl}, is that the action of $W$ on $\mathcal{A}_{\mathcal{E}}$ is by permutations of $\Omega$ 
	(see Section \ref{nekenrkefnknk}). This implies that the images of $X=w\cdot X'$ and $X'$ coincide, from which it follows 
	that $\textup{spec}(f)$ is well-defined. 

	Let us look at two examples. If $\mathcal{E}=\mathcal{P}_{n+1}^{\times}$ and 
	$N=\mathbb{P}_{n}(1)$, then the spectrum of $f=f_{H}$ is precisely 
	the spectrum in the classical sense of the Hermitian matrix $H$ (see Proposition \ref{neeknereknfk}). Remarkably, 
	this spectrum coincides with the set of critical values of $f$ (see Proposition \ref{nfeknkrnefknk}), and therefore 
	our definition of $\textup{spec}(f)$ agrees with the standard definition of the spectrum of a K\"{a}hler function 
	on $\mathbb{P}_{n}(1)$ as given in Geometric Quantum Mechanics (see, e.g., \cite{Ashtekar}). 

	If $\mathcal{E}=\mathcal{B}(n)$ and $N=S^{2}$, then the spectrum of $f(p)=\langle u,p\rangle+c$ ($u\in \mathbb{R}^{3}$, 
	$c\in \mathbb{R}$) is the set 
	$\{\lambda_{k}\,\,\vert\,\,k=0,1,...,n\}$, where 
	$\lambda_{k}=\tfrac{2}{n}\|u\|\big(-\tfrac{n}{2}+k\big)+c$ (see Proposition \ref{nfekwnkefnknkn}). In particular, 
	if $\|u\|=j:=\tfrac{n}{2}$ and $c=0$, then 
	\begin{eqnarray}\label{nfeknkewknk}
		\textup{spec}(f)=\{-j,-j+1,...,j-1,j\}.	
	\end{eqnarray}
	We recognize the spectrum of the usual spin operator $S_{u}$ (angular momentum) of Quantum Mechanics in the direction $u$.
	This is not a coincidence, and the precise connection between our definition of $\textup{spec}(f)$
	and the unitary representations of $\mathfrak{su}(2)$ (used by physicists 
	to describe the spin of a particle) is discussed in detail in \cite{Molitor-exponential} (see also below).

	The last result of this paper is a formula that relates the Riemannian gradient $\textup{grad}(f)$ of a K\"{a}hler function 
	$f:N\to \mathbb{R}$ with the variance of a certain random variable. More precisely, if 
	$f(p)=\mathbb{E}_{K(\phi(p))}(X)$, where $N$ is the torification of an exponential family 
	$\mathcal{E}$ defined over a finite set $\Omega$, then 
	\begin{eqnarray}\label{nekwnkefnknk}
		\mathbb{V}_{K(\phi(p))}(X)=\|\textup{grad}(f)\|^{2}
	\end{eqnarray}
	for all $p\in N$, where the left side of \eqref{nekwnkefnknk} is the variance of $X$ with respect to $K(\phi(p))$ (see 
	Theorem \ref{nceknkenknknk} for a precise statement). We give two applications: (1) The set of critical values of $f$ 
	is a subset of $\textup{spec}(f)$ (Proposition \ref{neknkendkwnkn}), and $(2)$ a point $p\in N$ is a fixed point 
	for the torus action if and only if $K(p)$ is a Dirac function, i.e., its support is a singleton 
	(Proposition \ref{ndknkefnknk}). 

	We conclude this introduction with a few remarks. If $N$ is a toric K\"{a}hler manifold with 
	real-analytic K\"{a}hler metric, and if 
	$N$ is a torification of an exponential family $\mathcal{E}$ defined over a finite set $\Omega=\{x_{1},...,x_{m+1}\}$, 
	then there is automatically a K\"{a}hler immersion 
	$j:N\to \mathbb{P}_{m}(1)$ (this is a consequence of a lifting procedure described in Section \ref{neknkenkfnkn}). 
	If $j$ is full in the sense of Calabi (see \cite{Calabi}), then one can use the rigidity theorem 
	to extend holomorphic isometries on $N$ to holomorphic isometries on 
	$\mathbb{P}_{m}(1)$, making $j$ an equivariant map. Then one can use the decompositon $f(p)=\mathbb{E}_{K(\phi(p))}(X)$ 
	to show that every K\"{a}hler function 
	$f$ on $N$ extends uniquely to a K\"{a}hler function $\hat{f}$ on $\mathbb{P}_{m}(1)$, and that the map $f\mapsto \hat{f}$ is a Poisson 
	homomorphism. Since every K\"{a}hler function on $\mathbb{P}_{m}(1)$ is of the form $f_{H}$ for some Hermitian matrix $H$, 
	we see that there is a canonical way to quantize K\"{a}hler functions on $N$. It would be interesting 
	to compare our definition of $\textup{spec}(f)$ to the spectra of the corresponding quantum operators. 
	The author wishes to explore further this topic in a future publication.\\

	The paper starts with a brief review of the rudiments of 
	symplectic geometry used throughout the paper. In Section \ref{nknkneknknfwknk}, we give a fairly detailed discussion on the
	concept of torification, with which not all readers may be familiar. The material is taken from \cite{molitor-toric}, except 
	for Section \ref{nkennkfenknk}, which is based on \cite{molitor-weyl}. In Section \ref{nfenwkeknwkndk}, we consider 
	a toric K\"{a}hler manifold and define a cocycle that measures 
	the non-equivariance of a momentum map $\moment$ with respect to the action of a certain Weyl group. In 
	Section \ref{nenefkwnknefkwndk}, we investigate the relationships between the space of K\"{a}hler functions 
	on a K\"{a}hler toric manifold $N$ and the momentum map associated to the torus action. In Section \ref{nfeqnkdekwnn}, 
	we specialize to the case in which $N$ is a torification of an exponential family $\mathcal{E}$ defined over a finite set $\Omega$ and 
	prove our main result (Theorem \ref{newnknfkenkndvknfk}). In Section \ref{nfeknkefnknkn}, we prove Formula 
	\eqref{nekwnkefnknk} and give two applications. The paper ends with an appendix in which we discuss the Lie group structure 
	of the group of holomorphic isometries of a connected K\"{a}hler manifold. 

	\textbf{}\\
	\noindent\textbf{Notation.} Let $f:M\to N$ be a smooth map between the manifolds $M$ and $N$. The derivative of $f$ at 
	$p$ is denoted by $f_{*_{p}}:T_{p}M\to T_{f(p)}N$. When $N=\mathbb{R}$ and $u\in T_{p}M$, we sometimes 
	write $f_{*_{p}}u=df_{p}(u)$ and regard the correspondence $p\mapsto df_{p}$ as a 1-form on $M$, that we denote by $df$. 
	We usually use $\Phi$ to denote a left Lie group action of a Lie group $G$ on a manifold $M$. 
	If $g\in G,$ then $\Phi_{g}$ is the map $M\to M$, $p\mapsto \Phi(g,p)$. Momentum maps are usually denoted by 
	$\moment:M\to \mathfrak{g}^{*}$. 

\section{Symplectic preliminaries}\label{nfeknkenfkndkn}

	In this section we recall the rudiments of symplectic geometry used throughout the paper. 

	Let $G$ be a Lie group with Lie algebra $\textup{Lie}(G)=\mathfrak{g}$. Let $\mathfrak{g}^{*}$ be the dual of $\mathfrak{g}$. 
	Given $g\in G$, we denote by $\textup{Ad}_{g}:\mathfrak{g}\to \mathfrak{g}$ and $\textup{Ad}_{g}^{*}:\mathfrak{g}^{*}\to 
	\mathfrak{g}^{*}$ the adjoint and coadjoint representations of $G$, respectively; they are related as follows: 
	$\langle \textup{Ad}_{g}^{*}\alpha, \xi\rangle=\langle \alpha,\textup{Ad}_{g^{-1}}\xi\rangle$, 
	where $\xi\in \mathfrak{g}$, $\alpha\in \mathfrak{g}^{*}$ and $\langle\,,\,\rangle$ 
	is the natural pairing between $\mathfrak{g}$ and $\mathfrak{g}^{*}$. 

	Let $\Phi:G\times M\to M$ be a Lie group action of $G$ on a manifold $M$. Given $g\in G$, let $\Phi_{g}$ denote
	the diffeomorphism of $M$ defined by $\Phi_{g}(p)=\Phi(g,p)$. The \textit{fundamental vector field} 
	associated to $\xi\in \mathfrak{g}$ is the vector field on $M$, denoted by $\xi_{M}$, defined by 
	$(\xi_{M})(p):=\tfrac{d}{dt}\big\vert_{0}\Phi(c(t),p),$ 
	where $p\in M$ and $c(t)$ is a smooth curve in $G$ satisfying $c(0)=e$ (neutral element) and $\dot{c}(0)=\xi$. 

	Suppose that $\omega$ is a symplectic form on $M$. The action $\Phi:G\times M\to M$ is called \textit{symplectic} 
	if $(\Phi_{g})^{*}\omega=\omega$ for all $g\in G$. The action $\Phi$ is called \textit{Hamiltonian} if it is symplectic and if 
	there exists a smooth map $\moment:M\to \mathfrak{g}^{*}$, called \textit{momentum map}, such that:
	\begin{enumerate}[(i)]
	\item $\moment \circ \Phi_{g}=\textup{Ad}_{g}^{*}\circ \moment$ for all $g\in G$, and 
	\item $\omega(\xi_{M},\,.\,)=d\moment^{\xi}(.)$ (equality of 1-forms) for all $\xi\in \mathfrak{g}$, where 
		$\moment^{\xi}:M\to \mathbb{R}$ is the function defined by $\moment^{\xi}(p):=\langle \moment(p),\xi\rangle.$
	\end{enumerate}


	When $G=\mathbb{T}^{n}=\mathbb{R}^{n}/\mathbb{Z}^{n}$ is a torus, it is convenient to identify 
	the Lie algebra of the torus $\mathbb{T}^{n}$ with $\mathbb{R}^{n}$ via the derivative at 
	$0\in \mathbb{R}^{n}$ of the quotient map $\mathbb{R}^{n}\to \mathbb{R}^{n}/\mathbb{Z}^{n}$, and 
	to identify $\mathbb{R}^{n}$ and its dual $(\mathbb{R}^{n})^{*}$ via the Euclidean metric.
	Upon these identifications, a momentum map for a Hamiltonian torus action 
	$\mathbb{T}^{n}\times M\to M$ is a map $\moment:M\to \mathbb{R}^{n}$. 
	Moreover, since the coadjoint action of a commutative group is trivial, 
	the equivariance condition reduces to $\moment \circ \Phi_{g}=\moment$ for all $g\in \mathbb{T}^{n}$. 

	A \textit{symplectic toric manifold} is a quadruplet $(M,\omega,\Phi,\moment)$, where $(M,\omega)$ 
	is a connected compact symplectic manifold of dimension $2n$, $\Phi:\mathbb{T}^{n}\times M\to M$ is an 
	effective Hamiltonian torus action and $\moment:M\to \mathbb{R}^{n}$ is an equivariant momentum map. 
	A symplectic toric manifold $(M,\omega,\Phi,\moment)$ is called a \textit{K\"{a}hler toric manifold} 
	if $M$ is a K\"{a}hler manifold whose K\"{a}hler form coincides with $\omega$, and if the torus acts 
	by holomorphic and isometric transformations on $M.$ By abuse of language, we will often say that the torus 
	action $\Phi:\mathbb{T}^{n}\times M\to M$ is a K\"{a}hler toric manifold. 

	For later reference, we collect some elementary facts from symplectic geometry. Let $\Phi:G\times M\to M$ be a 
	symplectic action of a Lie group $G$ on a symplectic manifold $(M,\omega)$. 
	Let $M^{\circ}$ be the set of points where the action is free, that is, $M^{\circ} =\{p\in M\,\,\vert\,\,\Phi(a,p)=p\Rightarrow a=e\}$. 

\begin{description}
	\item[\textbf{(F1)}] If $G$ is compact and the first de Rham cohomology group $H^{1}_{dR}(M,\mathbb{R})$ is trivial, 
		then the action is Hamiltonian (see \cite{Ortega}, Propositions 4.5.17 and 4.5.19). 
	\item[\textbf{(F2)}] If $G=T$ is a $m$-dimensional real torus and $\Phi$ is effective and Hamiltonian, 
		then $\textup{dim}\,M\geq 2m$ (see, e.g., \cite[Theorem 27.3]{Ana-lectures}).
	\item[\textbf{(F3)}] If $M$ is a symplectic toric manifold, then it is simply connected (see \cite{Delzant}). In particular, 
		$H^{1}_{dR}(M,\mathbb{R})=\{0\}$. 
	\item[\textbf{(F4)}] If $G=T$ is a $m$-dimensional real torus, 
		$M$ is connected and $\Phi$ effective, then $M^{\circ}$ is open, connected and dense in $M$ 
		(this follows from \cite[Corollary B.48]{Guillemin}).
\end{description}

\section{Torification of dually flat manifolds}\label{nknkneknknfwknk}
	In this section, we discuss the concept of torification, which is used throughout this paper. This concept 
	is a combination of two ingredients: (1) \textit{Dombrowski's construction}, 
	which implies that the tangent bundle of a dually flat manifold is naturally a K\"{a}hler manifold \cite{Dombrowski}, 
	and (2) \textit{parallel lattices}, which are used to implement torus actions. Further properties are discussed as well, 
	such as a lifting property. Examples from Information Geometry are presented. 
	The material is mostly taken from \cite{molitor-toric}.

\subsection{Dombrowski's construction.}\label{nkwwnkwnknknfkn}
	Let $M$	be a connected manifold of dimension $n$, endowed with a Riemannian metric $h$ 
	and affine connection $\nabla$ ($\nabla$ is not necessarily 
	the Levi-Civita connection). The \textit{dual connection} of $\nabla$, denoted by $\nabla^{*}$, is the only connection satisfying 
	$X(h(Y,Z))=h(\nabla_{X}Y,Z)+h(Y,\nabla^{*}_{X}Z)$ for all vector fields $X,Y,Z$ on $M$. 
	When both $\nabla$ and $\nabla^{*}$ are flat (i.e., the curvature tensor and torsion are zero), 
	we say that the triple $(M,h,\nabla)$ is a \textit{dually flat manifold}. 

	Let $\pi:TM\to M$ denote the canonical projection. Given a local coordinate system $(x_{1},...,x_{n})$ 
	on $U\subseteq M$, we can define a coordinate system $(q,r)=(q_{1},...,q_{n},r_{1},...,r_{n})$ 
	on $\pi^{-1}(U)\subseteq TM$ by letting $(q,r)(\sum_{j=1}^{n}a_{j}\tfrac{\partial}{\partial x_{j}}\big\vert_{p})=
	(x_{1}(p),...,x_{n}(p),a_{1},...,a_{n})$, where $p\in M$ and $a_{1},...,a_{n}\in \mathbb{R}$. 
	Write $(z_{1},...,z_{n})=(q_{1}+ir_{1},...,q_{n}+ir_{n})$, where $i=\sqrt{-1}$. When $\nabla$ is flat, Dombrowski 
	\cite{Dombrowski} showed that the family of complex coordinate systems $(z_{1},...,z_{n})$ 
	on $TM$ (obtained from affine coordinates on $M$) form a holomorphic atlas on $TM$. Thus, when $\nabla$ is flat, 
	$TM$ is naturally a complex manifold. If in addition $\nabla^{*}$ is flat, then $TM$ has a natural 
	K\"{a}hler metric $g$ whose local expression in the coordinates $(q,r)$ is given by $g(q,r)=
	\big[\begin{smallmatrix}
		h(x)  &  0\\
		0     &   h(x)
	\end{smallmatrix}
	\big]$, where $h(x)$ is the matrix representation of $h$ in the affine coordinates $x=(x_{1},...,x_{n})$. 
	It follows that the tangent bundle of a dually flat manifold is naturally a K\"{a}hler manifold. In this paper, 
	we will refer to this K\"{a}hler structure as the \textit{K\"{a}hler structure associated to Dombrowski's 
	construction}.

\subsection{Parallel lattices.}\label{nekwnkefkwnkk}

	Let $(M,h,\nabla)$ be a dually flat manifold of dimension $n$. 
	A subset $L\subset TM$ is said to be a \textit{parallel lattice} with respect to $\nabla$ if there are $n$ vector 
	fields $X_{1},...,X_{n}$ on $M$ that are parallel with respect to $\nabla$ and such that: $(i)$ $\{X_{1}(p),...,X_{n}(p)\}$ is a basis 
	for $T_{p}M$ for every $p\in M$, and $(ii)$ $L=\{k_{1}X_{1}(p)+...+k_{n}X_{n}(p)\,\,\vert\,\,k_{1},...,k_{n}\in \mathbb{Z},\,\,p\in M\}$. 
	In this case, we say that the frame $X=(X_{1},...,X_{n})$ is a 
	\textit{generator} for $L$. We will denote the set of generators for $L$ by 
	$\textup{gen}(L)$. 

	Given a parallel lattice $L\subset TM$ with respect to $\nabla$, and $X\in \textup{gen}(L)$ , 
	we will denote by $\Gamma(L)$ the set of 
	transformations of $TM$ of the form $u\mapsto u+k_{1}X_{1}+...+k_{n}X_{n}$, where 
	$u\in TM$ and $k_{1},...,k_{n}\in \mathbb{Z}$. The group
	$\Gamma(L)$ is independent of the choice of $X\in \textup{gen}(L)$ and is isomorphic to $\mathbb{Z}^{n}$. 
	Moreover, the natural action of $\Gamma(L)$ 
	on $TM$ is free and proper. Thus the quotient $M_{L}=TM/\Gamma(L)$ is a 
	smooth manifold and the quotient map $q_{L}:TM\to M_{L}$ is a covering map 
	whose Deck transformation group is $\Gamma(L)$. Since 
	$\pi\circ \gamma=\pi$ for all $\gamma\in \Gamma(L)$, 
	there exists a surjective submersion $\pi_{L}:M_{L}\to M$ such that $\pi=\pi_{L}\circ q_{L}$.

	Let $\mathbb{T}^{n}=\mathbb{R}^{n}/\mathbb{Z}^{n}$ be the $n$-dimensional torus. 
	Given $t=(t_{1},...,t_{n})\in \mathbb{R}^{n}$, we will denote 
	by $[t]=[t_{1},...,t_{n}]$ the corresponding equivalence class in $\mathbb{R}^{n}/\mathbb{Z}^{n}$. 
	Given $X\in \textup{gen}(L)$, we will denote by 
	$\Phi_{X}:\mathbb{T}^{n}\times M_{L}\to M_{L}$ the effective torus action defined by 
	\begin{eqnarray}\label{nkdnknewknek}
		\Phi_{X}([t],q_{L}(u))=q_{L}(u+t_{1}X_{1}+...+t_{n}X_{n}). 
	\end{eqnarray}

	The manifold $M_{L}=TM/\Gamma(L)$ is naturally a K\"{a}hler manifold (this follows from the fact that 
	every $\gamma\in \Gamma(L)$ is a holomorphic and isometric map with respect the K\"{a}hler structure 
	associated to Dombrowski's construction). Moreover, for each $a\in \mathbb{T}^{n}$, the map 
	$(\Phi_{X})_{a}:M_{L}\to M_{L}$, $p\mapsto \Phi_{X}(a,p)$ is a holomorphic isometry. 

\subsection{Torification.}\label{nfkenkfejdefdknkn} 
	Let $(M,h,\nabla)$ be a connected dually flat manifold of dimension $n$ and $N$ a connected K\"{a}hler 
	manifold of complex dimension $n$, equipped with an effective holomorphic and isometric torus 
	action $\Phi:\mathbb{T}^{n}\times N\to N$. Let $N^{\circ}$ denote the set of points $p\in N$ 
	where $\Phi$ is free. 

\begin{definition}[\textbf{Torification}]
	We shall say that $N$ is a \textit{torification} of $M$ 
	if there exist a parallel lattice $L\subset TM$ with respect to 
	$\nabla$, $X\in \textup{gen}(L)$ and a holomorphic and isometric diffeomorphism $F:M_{L}\to N^{\circ}$
	satisfying $F\circ (\Phi_{X})_{a}=\Phi_{a}\circ F$ for all $a\in \mathbb{T}^{n}$. 
\end{definition}

	By abuse of language, we will often say that the torus action $\Phi:\mathbb{T}^{n}\times N\to N$ is a torification of $M$. 
	We shall say that a K\"{a}hler manifold $N$ is \textit{regular} if it is connected, 
	simply connected, complete and if the K\"{a}hler metric is real analytic. 
	A torification $\Phi:\mathbb{T}^{n}\times N\to N$ is said to be \textit{regular} if $N$ is regular. 
	In this paper, we are mostly interested 
	in regular torifications. 
\begin{remark}\label{ndkdnknknknk}
	A torification $N$ is not necessarily a K\"{a}hler toric manifold. For example, $N$ may not be compact 
	(see Proposition \ref{nfeknwknwknk}). 
\end{remark}

	Two torifications $\Phi:\mathbb{T}^{n}\times N\to N$ and $\Phi':\mathbb{T}^{n}\times N'\to N'$ 
	of the same connected dually flat manifold $(M,h,\nabla)$ are said to be \textit{equivalent} if there exists 
	a K\"{a}hler isomorphism $f:N\to N'$ and a Lie group isomorphism $\rho:\mathbb{T}^{n}\to \mathbb{T}^{n}$ such that 
	$f\circ \Phi_{a}=\Phi'_{\rho(a)}\circ f$ for all $a\in \mathbb{T}^{n}$.
\begin{theorem}[\textbf{Equivalence of regular torifications}]\label{newdnkekfwndknk}
	Regular torifications of a connected dually flat manifold $(M,h,\nabla)$ are equivalent. 
\end{theorem}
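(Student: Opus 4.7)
The plan is to treat the two regular torifications $(N,\Phi,F,L,X)$ and $(N',\Phi',F',L',X')$ of $(M,h,\nabla)$ as two completions of a common K\"ahler ``interior'' built from $TM$, and to use real-analyticity and completeness to extend a diffeomorphism between the interiors to the whole manifolds. Composing the covering maps $q_{L},q_{L'}\colon TM\to M_{L},M_{L'}$ with $F$ and $F'$ gives K\"ahler local isometries $\tilde F\colon TM\to N^{\circ}$ and $\tilde F'\colon TM\to (N')^{\circ}$ onto dense open subsets, with deck groups $\Gamma(L)$ and $\Gamma(L')$, each isomorphic to $\mathbb{Z}^{n}$ and generated by parallel frames $X,X'$ on $M$.

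The first step is to align the two coverings. The lattices $L$ and $L'$ are both parallel sublattices of $TM$, and the groups $\Gamma(L)$ and $\Gamma(L')$ act by K\"ahler automorphisms of $TM$ via parallel translations. I would show that these two groups coincide after an integral change of basis $A\in GL(n,\mathbb{Z})$, by reading off the lattice from the local structure of $\Phi$ and $\Phi'$ near the non-free locus $N\setminus N^{\circ}$ (resp.\ $N'\setminus (N')^{\circ}$): the isotropy weights of the torus action at the singular strata determine the lattice up to integral base change. The matrix $A$ furnishes the Lie group automorphism $\rho\colon \mathbb{T}^{n}\to \mathbb{T}^{n}$, and descent of the identity on $TM$ twisted by $A$ yields a K\"ahler diffeomorphism $\varphi\colon N^{\circ}\to (N')^{\circ}$ that intertwines $\Phi$ and $\Phi'$ through $\rho$.

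The second step is to extend $\varphi$ across the closed sets $N\setminus N^{\circ}$ and $N'\setminus (N')^{\circ}$. Here the regularity hypothesis intervenes decisively: since $N$ and $N'$ are connected, simply connected, complete, and real-analytic K\"ahler manifolds, the classical theorem on analytic continuation of local isometries (Myers-Steenrod together with real-analytic rigidity, applied along paths from a point of $N^{\circ}$) extends $\varphi$ uniquely to a global isometry $f\colon N\to N'$. Real-analyticity ensures that $f$ preserves the complex structure on all of $N$, since its differential agrees with the almost-complex structure on a dense set; hence $f$ is a K\"ahler isomorphism. The equivariance $f\circ \Phi_{a}=\Phi'_{\rho(a)}\circ f$ extends from $N^{\circ}$ to $N$ by continuity, using that $\Phi,\Phi'$ act by holomorphic isometries.

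The step I expect to be the main obstacle is the lattice identification, since the definition of torification does not by itself fix a preferred parallel lattice in $TM$; one must argue that the combinatorics of the ``compactification'' $N\supset N^{\circ}$ forces $L$ and $L'$ to coincide up to $GL(n,\mathbb{Z})$. The extension step, while not entirely formal, is a fairly standard application of real-analytic rigidity once the alignment is in place.
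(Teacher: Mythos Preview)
The paper does not supply its own proof here; the result is quoted from \cite{molitor-toric}. Within the paper's toolkit, the natural route is to apply the lifting theorem (Theorem~\ref{ncnkwknknk}) to $f=\mathrm{id}_{M}$: one obtains a K\"ahler immersion $m\colon N\to N'$ equivariant with respect to some homomorphism $\rho\colon\mathbb{T}^{n}\to\mathbb{T}^{n}$, and uniqueness of lifts applied to $m'\circ m$ and $m\circ m'$ (with $m'$ the lift in the reverse direction) forces $m$ to be an isomorphism and $\rho$ an automorphism.

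Your step~2 is the right engine and is presumably what drives the proof of Theorem~\ref{ncnkwknknk} in the cited reference. You correctly flag step~1 as the obstacle, but the isotropy-weight justification does not work as stated: the weights at a fixed point of $N$ lie in the character lattice of $\mathbb{T}^{n}$, not in $TM$, and passing between the two already presupposes a choice of toric parametrisation. Since your construction of $\varphi$ by ``descent of the identity on $TM$'' needs $\Gamma(L)=\Gamma(L')$ up front, step~1 is load-bearing in your scheme and cannot be left at this level of vagueness.

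The fix is to reverse the order. Analytically continue only the local germ $\tau'\circ\tau^{-1}$ near one point (not a globally defined $\varphi$) to a K\"ahler isometry $f\colon N\to N'$, using regularity of $N$ and $N'$. Then $f\circ\tau$ and $\tau'$ are real-analytic local isometries $TM\to N'$ agreeing on an open set, hence equal everywhere by the identity principle. From $f\circ\tau=\tau'$ one reads off immediately that each $\gamma\in\Gamma(L)$ satisfies $\tau'\circ\gamma=f\circ\tau\circ\gamma=f\circ\tau=\tau'$, so $\Gamma(L)\subseteq\Gamma(L')$; by symmetry $\Gamma(L)=\Gamma(L')$, and the required $\rho\in GL(n,\mathbb{Z})$ together with equivariance follow at once. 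The lattice identification is thus a consequence of the extension, not a prerequisite for it.
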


	A connected dually flat manifold $(M,h,\nabla)$ is said to be \textit{toric} if it has a regular torification $N$. 
	In this case, we will often refer to $N$ as ``the regular torification of $M$", and keep in mind that it is 
	only defined up to an equivariant K\"{a}hler isomorphism and reparametrization of the torus. 

	For later reference, we give the following technical definition.
\begin{definition}\label{fjenkrkefnkn}
	Suppose $\Phi:\mathbb{T}^{n}\times N\to N$ is a torification of $(M,h,\nabla)$. 
	\begin{enumerate}[(1)]
	\item A \textit{toric parametrization} is a triple $(L,X,F)$, where $L\subset TM$ is parallel lattice with respect to $\nabla$, 
		$X\in \textup{gen}(L)$ and $F:M_{L}\to N^{\circ}$ is a holomorphic and isometric diffeomorphism satisfying 
		$F\circ (\Phi_{X})_{a}=\Phi_{a}\circ F$ for all $a\in \mathbb{T}^{n}$.
	\item  Let $\tau:TM\to N^{\circ}$ and $\kappa:N^{\circ}\to M$ be smooth maps. We say that 
		the pair $(\tau,\kappa)$ is a \textit{toric factorization} 
		if there exists a toric parametrization $(L,X,F)$ such that $\tau=F\circ q_{L}$ and $\kappa=\pi_{L}\circ F^{-1}$. 
	In this case, we say that $(\tau, \kappa)$ is \textit{induced by the toric parametrization} $(L,X,F)$. Note that $\pi=\kappa\circ \tau.$
\item We say that $\kappa:N^{\circ}\to M$ is the \textit{compatible projection induced by the toric parametrization} $(L,X,F)$ 
	if there exists a map $\tau:TM\to N^{\circ}$ such that $(\tau,\kappa)$ is the toric factorization induced by $(L,X,F)$. 
	When it is not necessary to mention $(L,X,F)$ explicitly, we just say 
	that $\kappa$ is a \textit{compatible projection}. Analogously, one defines a \textit{compatible covering map} $\tau:TM\to N^{\circ}$. 
\end{enumerate}
\end{definition}
	\noindent By abuse of language, we will often say that the formula $\pi=\kappa\circ \tau$ is a toric factorization. 

	If $\Phi:\mathbb{T}^{n}\times N\to N$ is a regular torification of $M$, and 
	if $\kappa,\kappa':N^{\circ}\to M$ are compatible projections, 
	then there is a holomorphic isometry $\varphi:N\to N$ and a Lie group isomorphism 
	$\rho:\mathbb{T}^{n}\to \mathbb{T}^{n}$ such that $\varphi\circ \Phi_{a}=\Phi_{\rho(a)}\circ \varphi$ 
	for all $a\in \mathbb{T}^{n}$ and $\kappa'=\kappa\circ \varphi$.

\subsection{Lifting procedure.}\label{neknkenkfnkn}
	Let $\Phi:\mathbb{T}^{n}\times N\to N$ and $\Phi':\mathbb{T}^{d}\times N'\to N'$ be torifications of the dually flat manifolds 
	$(M,h,\nabla)$ and $(M',h',\nabla')$, respectively. 

\begin{definition}
	Let $f:M\to M'$ and $m:N\to N'$ be smooth maps. We say that $m$ is a \textit{lift of} $f$ 
	if there are compatible covering maps $\tau:TM\to N^{\circ}$ and $\tau':TM'\to (N')^{\circ}$ such that 
	$m\circ \tau= \tau'\circ f_{*}$. In this case, we say that $m$ \textit{is a lift of} $f$ 
	\textit{with respect to} $\tau$ and $\tau'$.  
\end{definition}

	If $m$ is a lift of $f$ with respect to $\tau$ and $\tau'$, and if $\pi=\kappa\circ \tau$ and $\pi'=\kappa'\circ \tau'$ are toric factorizations, then 
	$\kappa'\circ m =f\circ \kappa.$ Therefore the following diagram commutes:

\begin{eqnarray}\label{knkdfnknkndknk}
	\begin{tikzcd}
		TM \arrow{rrr}{\displaystyle f_{*}} \arrow[dd,swap,"\displaystyle\pi"] \arrow{rd}{\displaystyle \tau} &    &   & 
			TM' \arrow[swap]{ld}{\displaystyle \tau'} \arrow[dd,"\displaystyle\pi'"]  \\
		& N^{\circ} \arrow{r}{\displaystyle m} \arrow{ld}{\displaystyle \kappa}  
			& (N')^{\circ} \arrow[swap]{dr}{\displaystyle \kappa'}  & \\
		M \arrow[swap]{rrr}{\displaystyle f}  &     && M'
	\end{tikzcd}
\end{eqnarray}

	If $m:N\to N'$ is a lift of $f$, then $m$ is a K\"{a}hler immersion if and only if $f:(M,\nabla)\to (M',\nabla')$ is an affine immersion 
	satisfying $f^{*}h'=h$. 
	In this case, there exists a unique Lie group homomorphism 
	$\rho:\mathbb{T}^{n}\to \mathbb{T}^{d}$ with finite kernel such that $m\circ \Phi_{a}=\Phi_{\rho(a)}'\circ m$ 
	for all $a\in \mathbb{T}^{n}$.

\begin{theorem}[\textbf{Existence of lifts}]\label{ncnkwknknk}
	Suppose $\Phi:\mathbb{T}^{n}\times N\to N$ and $\Phi':\mathbb{T}^{d}\times N'\to N'$ are 
	regular torifications of $(M,h,\nabla)$ and $(M',h',\nabla')$, respectively. 
	Let $\tau:TM\to N^{\circ}$ and $\tau':TM'\to (N')^{\circ}$ be compatible covering maps. Then every isometric affine immersion 
	$f:M\to M'$ has a unique lift $m:N\to N'$ with respect to $\tau$ and $\tau'$. 
\end{theorem}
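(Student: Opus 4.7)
The plan is to define the lift $m$ in two stages: first on the open dense subset $N^{\circ}\subset N$ where the torus acts freely, then to extend it continuously to all of $N$. Fix toric parametrizations $(L,X,F)$ and $(L',X',F')$ inducing $\tau$ and $\tau'$ respectively, so that $\tau=F\circ q_{L}$ and $\tau'=F'\circ q_{L'}$. Since $\tau\colon TM\to N^{\circ}$ is surjective, the equation $m\circ\tau=\tau'\circ f_{*}$ forces $m(\tau(u))=\tau'(f_{*}u)$, so $m$ exists on $N^{\circ}$ precisely when $\tau'\circ f_{*}\colon TM\to (N')^{\circ}$ is invariant under the deck transformation group of $\tau$. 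That deck group is $\Gamma(L)\cong\mathbb{Z}^{n}$, acting fiberwise by translations along integer combinations of $X_{1},\dots,X_{n}$, so the invariance reduces to the condition $f_{*}(L)\subseteq L'$.

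The central algebraic step is therefore to verify $f_{*}(L)\subseteq L'$. Because $f$ is affine, each pushforward $f_{*}X_{i}$ is parallel along $f$ with respect to $\nabla'$, and can be expanded using the parallel frame generating $L'$ as $f_{*}X_{i}=\sum_{j}a^{j}_{i}\, X'_{j}\circ f$ with constant coefficients. The key claim is that the matrix $(a^{j}_{i})$ has integer entries; this is where the combined hypotheses that $f$ is affine and isometric, and that $\tau,\tau'$ are compatible covering maps encoding the lattices $L$ and $L'$, come together. Granted this integrality, $\tau'\circ f_{*}$ descends through $\tau$ to a well-defined smooth map $m\colon N^{\circ}\to (N')^{\circ}$, which is holomorphic and metric-preserving because $\tau$ and $\tau'$ are local biholomorphic isometries and $f_{*}$ is linear and isometric on fibers.

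For the second stage, I would extend $m$ from $N^{\circ}$ to $N$. Since $N^{\circ}$ is open and dense in $N$ by fact \textbf{(F4)}, and $N'$ is Hausdorff, a continuous extension is unique if it exists. Existence is obtained by combining the isometric nature of $m$ on $N^{\circ}$ with the regularity of $N'$ (completeness and real-analyticity): a sequence in $N^{\circ}$ converging to a point $p\in N\setminus N^{\circ}$ is Cauchy, so its $m$-image is Cauchy in the complete Riemannian manifold $N'$ and converges; the limit is independent of the sequence by Hausdorffness, and the extended $m$ is automatically smooth and holomorphic by real-analytic continuation across the thin exceptional set $N\setminus N^{\circ}$. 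Uniqueness of the entire lift is then immediate: any two lifts coincide on $N^{\circ}$ by surjectivity of $\tau$, hence on all of $N$ by density and continuity.

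The step I expect to be hardest is the integrality of the coefficient matrix $(a^{j}_{i})$ in the second paragraph; this is the precise algebraic content of the compatibility between $f$ and the parallel lattice structures encoded by $\tau,\tau'$, and it is what separates the present theorem from a purely abstract covering-space lifting statement. Once the integrality is established, the remainder of the argument is standard: a covering-space descent followed by continuous extension across a measure-zero, real-analytically thin subset.
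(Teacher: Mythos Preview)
The paper does not contain a proof of this theorem; it is quoted from \cite{molitor-toric}, so there is nothing in the text to compare your argument against directly.

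On its own terms, your two-stage plan is the natural one, and you have correctly isolated the integrality of $(a^{j}_{i})$ as the decisive point. But as written this is a genuine gap: you assert that ``this is where the combined hypotheses come together'' without indicating any mechanism. The conditions that $f$ be affine and isometric constrain only the linear part of $f$ relative to the metrics $h,h'$; they say nothing about the lattices $L,L'$, which are attached to the torifications $N,N'$ rather than to $(M,h,\nabla)$ and $(M',h',\nabla')$ alone. A matrix satisfying $A^{T}h'A=h$ has no a priori reason to carry one lattice into another. Either the integrality requires a substantive argument exploiting how a \emph{regular} torification pins down its lattice (and you have not supplied one), or the proof runs differently---for instance by defining $m$ only as a germ near a single point of $N^{\circ}$ via local sections of $\tau,\tau'$, and then propagating it over the simply connected, complete, real-analytic $N$, so that the lattice compatibility $f_{*}(L)\subseteq L'$ becomes a \emph{consequence} of the global identity $m\circ\tau=\tau'\circ f_{*}$ rather than an input. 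Notice that your descent step uses none of the regularity hypotheses, which is a hint that they are meant to do more work than just the final extension. Separately, your extension paragraph needs tightening: you should justify that the intrinsic distance on $N^{\circ}$ coincides with the ambient distance from $N$ (true because $N\setminus N^{\circ}$ has real codimension $\geq 2$), and then invoke the Riemann extension theorem across this complex-analytic set rather than appealing loosely to ``real-analytic continuation.''
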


	Let $\textup{Diff}(M,h,\nabla)$ be the group of isometries of $(M,h)$ that are also affine with respect to $\nabla$, 
	and let $\textup{Aut}(N,g)$ be the group of holomorphic isometries of $N$. It follows from Theorem \ref{ncnkwknknk} that 
	every $\psi\in \textup{Diff}(M,h,\nabla)$ has a lift with respect to $\tau$ and $\tau'=\tau$, that we will denote by 
	$\textup{lift}_{\tau}(\psi)$. The following result is a consequence of \cite[Proposition 9.9]{molitor-toric}.

\begin{proposition}\label{nfeknwkneknk}
	The map $\textup{Diff}(M,h,\nabla)\to \textup{Aut}(N,g)$, $\psi\mapsto \textup{lift}_{\tau}(\psi)$ is a group homomorphism. 
\end{proposition}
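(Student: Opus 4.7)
The plan is to deduce the homomorphism property entirely from the uniqueness statement in Theorem \ref{ncnkwknknk}. The key observation is that the lift of any isometric affine diffeomorphism is uniquely characterized by the intertwining relation $m\circ \tau=\tau\circ f_{*}$, so chaining two such relations immediately yields the relation for a product.

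More precisely, I would proceed in three steps. First, verify that $\textup{lift}_{\tau}(\textup{id}_{M})=\textup{id}_{N}$. Since $(\textup{id}_{M})_{*}=\textup{id}_{TM}$, the identity map of $N$ trivially satisfies $\textup{id}_{N}\circ \tau = \tau \circ (\textup{id}_{M})_{*}$ and is a holomorphic isometry; the uniqueness clause in Theorem \ref{ncnkwknknk} (applied with $(M',h',\nabla',\tau')=(M,h,\nabla,\tau)$) then forces $\textup{lift}_{\tau}(\textup{id}_{M})=\textup{id}_{N}$. Second, given $\psi_{1},\psi_{2}\in \textup{Diff}(M,h,\nabla)$, set $m_{i}:=\textup{lift}_{\tau}(\psi_{i})$, so that $m_{i}\circ \tau = \tau \circ (\psi_{i})_{*}$ by definition. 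Then
\begin{eqnarray*}
(m_{1}\circ m_{2})\circ \tau \,=\, m_{1}\circ (m_{2}\circ \tau) \,=\, m_{1}\circ \tau \circ (\psi_{2})_{*} \,=\, \tau \circ (\psi_{1})_{*}\circ (\psi_{2})_{*} \,=\, \tau \circ (\psi_{1}\circ \psi_{2})_{*},
\end{eqnarray*}
using functoriality of the tangent functor in the last equality. This shows that $m_{1}\circ m_{2}$ is a lift of $\psi_{1}\circ \psi_{2}$ with respect to $\tau$ and $\tau$. By uniqueness, $\textup{lift}_{\tau}(\psi_{1}\circ \psi_{2})=m_{1}\circ m_{2}=\textup{lift}_{\tau}(\psi_{1})\circ \textup{lift}_{\tau}(\psi_{2})$.

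Third, I would confirm that each $\textup{lift}_{\tau}(\psi)$ genuinely lies in $\textup{Aut}(N,g)$, i.e., is a \emph{bijective} holomorphic isometry rather than merely a K\"{a}hler immersion. This is an immediate corollary of the first two steps: applying them to $\psi$ and $\psi^{-1}$ yields $\textup{lift}_{\tau}(\psi^{-1})\circ \textup{lift}_{\tau}(\psi)=\textup{lift}_{\tau}(\textup{id}_{M})=\textup{id}_{N}$ and likewise on the other side, so $\textup{lift}_{\tau}(\psi)$ is a diffeomorphism with inverse $\textup{lift}_{\tau}(\psi^{-1})$, and the K\"{a}hler immersion property promotes to a holomorphic isometry.

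The main (and essentially only) obstacle is checking that Theorem \ref{ncnkwknknk} truly applies in the required generality: every $\psi\in \textup{Diff}(M,h,\nabla)$ is by definition simultaneously an isometry of $(M,h)$ and affine with respect to $\nabla$, so the hypothesis of being an isometric affine immersion is automatic, and one may legitimately take $(M',h',\nabla')=(M,h,\nabla)$ and $\tau'=\tau$. Once this is noted, the remainder is a routine application of the uniqueness of lifts together with the chain rule $(\psi_{1}\circ \psi_{2})_{*}=(\psi_{1})_{*}\circ (\psi_{2})_{*}$.
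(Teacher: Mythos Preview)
Your argument is correct. The paper does not actually give a proof of this proposition; it merely states that it ``is a consequence of \cite[Proposition 9.9]{molitor-toric}.'' Your proof is the natural direct argument: it uses only the uniqueness clause of Theorem \ref{ncnkwknknk} (which the paper also imports from \cite{molitor-toric}) together with the chain rule $(\psi_{1}\circ\psi_{2})_{*}=(\psi_{1})_{*}\circ(\psi_{2})_{*}$, and your third step cleanly establishes that the image lands in $\textup{Aut}(N,g)$ rather than merely in the K\"{a}hler immersions. Since the paper defers the proof entirely to the cited reference, there is nothing further to compare; your self-contained argument is presumably what that reference does, and in any case it is complete as written.
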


\subsection{Examples from Information Geometry.}\label{nknkefnknk} 

	The concept of torification was motivated, in the first place, by the 
	connection between K\"{a}hler geometry, Information Geometry and Quantum Mechanics.
	In this section, we illustrate this connection with a few examples. The reader interested in 
	Information Geometry may consult \cite{Jost2,Amari-Nagaoka,Murray}.

\begin{definition}\label{def:5.1}
	A \textit{statistical manifold} is a pair $(S,j)$, where $S$ is a manifold and where $j$ 
	is an injective map from $S$ to the space of all probability density functions $p$ 
	defined on a fixed measure space $(\Omega,dx)$: 
	\begin{eqnarray*}
		j:S\hookrightarrow\Bigl\{ p:\Omega\to\mathbb{R}\;\Bigl|\; p\:\textup{is measurable, }p\geq 
		0\textup{ and }\int_{\Omega}p(x)dx=1\Bigr\}.
	\end{eqnarray*}
\end{definition}

	If $\xi=(\xi_{1},...,\xi_{n})$ is a coordinate system on a statistical manifold $S$, then we shall 
	indistinctly write $p(x;\xi)$ or $p_{\xi}(x)$ for the probability density function determined by $\xi$.

	Given a ``reasonable" statistical manifold $S$, it is possible to define a metric $h_{F}$ and a 
	family of connections $\nabla^{(\alpha)}$ on $S$ $(\alpha\in\mathbb{R})$ in the following way: 
	for a chart $\xi=(\xi_{1},...,\xi_{n})$ of $S$, define
	\begin{alignat*}{1}
		\bigl(h_F\bigr)_{\xi}\bigl(\partial_{i},\partial_{j}\bigr) & :=
		\mathbb{E}_{p_{\xi}}\bigl(\partial_{i}\ln\bigl(p_{\xi}\bigr)\cdotp\partial_{j}\ln\bigl(p_{\xi}\bigr)\bigr),
		\nonumber\\
		\Gamma_{ij,k}^{(\alpha)}\bigl(\xi\bigr) & :=
		\mathbb{E}_{p_{\xi}}\bigl[\bigl(\partial_{i}\partial_{j}\ln\bigl(p_{\xi}\bigr)
		+\tfrac{1-\alpha}{2}\partial_{i}\ln\bigl(p_{\xi}\bigr)\cdotp\partial_{j}\ln\bigl(p_{\xi}\bigr)\bigr)
		\partial_{k}\ln\bigl(p_{\xi}\bigr)\bigr],\label{eq:48}\nonumber
	\end{alignat*}
	where $\mathbb{E}_{p_{\xi}}$ denotes the mean, or expectation, with respect to the probability 
	$p_{\xi}dx$, and where $\partial_{i}$ is a shorthand for $\tfrac{\partial}{\partial\xi_{i}}$. 
	In the formulas above, it is assumed that the function $p_{\xi}(x)$ is smooth with respect to 
	$\xi$ and that the expectations are finite. When the first formula above defines a smooth metric $h_{F}$ on $S$, 
	it is then called \textit{Fisher metric}. In this case, the 
	$\Gamma_{ij,k}^{(\alpha)}$'s define a connection $\nabla^{(\alpha)}$ on $S$ via the formula 
	$\Gamma_{ij,k}^{(\alpha)}(\xi)=(h_F)_{\xi}(\nabla_{\partial_{i}}^{(\alpha)}\partial_{j},\partial_{k})$, 
	which is called the \textit{$\alpha$-connection}.

%
	Among the $\alpha$-connections, the $1$-connection is particularly important and 
	is usually referred to as the \textit{exponential connection}, also denoted by $\nabla^{(e)}$. 
	In this paper, we will only consider statistical manifolds $S$ for which the Fisher metric $h_{F}$ and
	exponential connection $\nabla^{(e)}$ are well defined.


	We now recall the definition of an exponential family. 
\begin{definition}\label{def:5.3} 
	An \textit{exponential family} $\mathcal{E}$ on a measure space $(\Omega,dx)$ is a set of probability 
	density functions $p(x;\theta)$ of the form 
	$p(x;\theta)=\exp\big\{ C(x)+\sum_{i=1}^{n}\theta_{i}F_{i}(x)-\psi(\theta)\big\},$ 
	where $C,F_1...,F_n$ are measurable functions on $\Omega$, $\theta=(\theta_{1},...,\theta_{n})$ 
	is a vector varying in an open subset $\Theta$ of $\mathbb{R}^{n}$ and where $\psi$ is a function defined on $\Theta$.
\end{definition}
	In the above definition, it is assumed that the family of functions $\{1,F_1,...,$ $F_n\}$ 
	is linearly independent, so that the map $p(x,\theta)\mapsto\theta$ becomes a bijection, hence defining a global 
	chart for $\mathcal{E}$. The parameters $\theta_{1},...,\theta_{n}$ are called the 
	\textit{natural} or \textit{canonical parameters} of the exponential family $\mathcal{E}$. 
	
\begin{example}[\textbf{Binomial distribution}]\label{exa:5.6}
	Let $\mathcal{B}(n)$ be the set of Binomial distributions $p(k)=\binom{n}{k}q^{k}\bigl(1-q\bigr)^{n-k}$, $q\in (0,1)$, 
	defined over $\Omega=\{0,...,n\}$. It is a 1-dimensional exponential family, because 
	$p(k)=\exp\big\{ C(k)+\theta F(k)-\psi(\theta)\big\}$, where $\theta=\ln\big(\frac{q}{1-q}\big)$, $C(k)= \ln\binom{n}{k}$, 
	$F(k)=k$, $k\in \Omega$, and $\psi(\theta)=n\ln\big(1+\exp(\theta)\big)$. 
\end{example}
\begin{example}[\textbf{Categorical distribution}]\label{exa:5.5}
	Let $\Omega=\{ x_{1},...,x_{n}\}$ be a finite set and let $\mathcal{P}_{n}^{\times}$ be the set of maps
	$p:\Omega\to \mathbb{R}$ satisfying $p(x)>0$ for all $x\in \Omega$ and $\sum_{x\in \Omega}p(x)=1$. Then 
	$\mathcal{P}_{n}^{\times}$ is an exponential family of dimension $n-1$. Indeed, elements in $\mathcal{P}_{n}^{\times}$ 
	can be parametrized as follows: $p(x;\theta)=\exp\big\{\sum_{i=1}^{n-1}\theta_{i}F_{i}(x)-\psi(\theta)\big\}$,
	where $x\in \Omega$, $\theta=(\theta_{1},...,\theta_{n-1})\in\mathbb{R}^{n-1},$ 
	$F_{i}(x_{j})=\delta_{ij}$ \textup{(Kronecker delta)} and $\psi(\theta)=\ln\big(1+\sum_{i=1}^{n-1}e^{\theta_{i}}\big)$. 
%
%
%
%
\end{example}

\begin{example}[\textbf{Poisson distribution}]
	A Poisson distribution is a distribution over $\Omega=\mathbb{N}=\{0,1,...\}$ of the form 
	$p(k;\lambda)=e^{-\lambda}\tfrac{\lambda^{k}}{k!}$, 
	where $k\in \mathbb{N}$ and $\lambda>0$. Let $\mathscr{P}$ denote the set of all Poisson distributions 
	$p(\,.\,;\lambda)$, $\lambda>0$. The set $\mathscr{P}$ is an exponential family, because 
	$p(k,\lambda)=\textup{exp}\big(C(k)+F(k)\theta-\psi(\theta)\big),$ where 
	$C(k)=-\ln(k!)$, $F(k)=k$, $\theta=\ln(\lambda)$, and $\psi(\theta)=\lambda=e^{\theta}.$
\end{example}

	Under mild assumtions, it can be proved that an exponential family $\mathcal{E}$ endowed with 
	the Fisher metric $h_{F}$ and exponential connection $\nabla^{(e)}$ is a dually flat manifold 
	(see \cite{Amari-Nagaoka}). This is the case, for example, if $\Omega$ is a finite set endowed 
	with the counting measure (see \cite{shima}, Chapter 6). In the sequel, we will always regard 
	an exponential family  $\mathcal{E}$ as a dually flat manifold. 

	Since an exponential family is dually flat, it is natural to ask whether it is toric. 
	Below we describe three examples. Let $\mathbb{P}_{n}(c)$ be the complex projective space of complex dimension $n$, 
	endowed with the Fubini-Study metric normalized 
	in such a way that the holomorphic sectional curvature is $c>0$. Let $\Phi_{n}$ be the action of 
	$\mathbb{T}^{n}$ on $\mathbb{P}_{n}(c)$ defined by 
	\begin{eqnarray}\label{neknknknknfdkn}
		\Phi_{n}([t],[z])= [e^{2i\pi t_{1}}z_{1},...,e^{2i\pi t_{n}}z_{n},z_{n+1}]. 
		\,\,\,\,\,\,\,(\textup{homogeneous coordinates})
	\end{eqnarray}
\begin{proposition}\label{nfeknwknwknk}
	In each case below, the torus action is a regular torification of the indicated exponential family $\mathcal{E}$. \\

	\begin{tabular}{lll}
		$(a)$ &  $\mathcal{E}=\mathcal{P}_{n+1}^{\times}$,    &    
						$\Phi_{n}:\mathbb{T}^{n}\times \mathbb{P}_{n}(1)\to \mathbb{P}_{n}(1)$.\\[0.4em]
		$(b)$ & $\mathcal{E}=\mathcal{B}(n)$,                &    $\Phi_{1}:\mathbb{T}^{1}\times 
								\mathbb{P}_{1}(\tfrac{1}{n})\to \mathbb{P}_{1}(\tfrac{1}{n})$.\\[0.4em]
		$(c)$ & $\mathcal{E}=\mathscr{P}$,                   & $\mathbb{T}^{1}\times \mathbb{C}\to \mathbb{C}$, $([t],z)\mapsto e^{2i\pi t}z$.	
	\end{tabular}
\end{proposition}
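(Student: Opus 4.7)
The plan is to verify each case by explicitly constructing a toric parametrization $(L, X, F)$ as in Definition \ref{fjenkrkefnkn}, and checking that the target K\"ahler manifold $N$ is regular in the sense defined in Section \ref{nfkenkfejdefdknkn}. Since the natural parameters $\theta = (\theta_1, \ldots, \theta_n)$ of an exponential family are, by construction, global affine coordinates for the exponential connection $\nabla^{(e)}$, the coordinate vector fields $\partial/\partial\theta_k$ are $\nabla^{(e)}$-parallel on $\mathcal{E}$. I would therefore take $L$ to be the parallel lattice generated by the frame $X = (a\,\partial/\partial\theta_1, \ldots, a\,\partial/\partial\theta_n)$, where $a > 0$ is a scaling constant chosen so that the induced $\mathbb{T}^n$-action on $\mathcal{E}_L$ matches the prescribed action on $N$.

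A key simplification is that Dombrowski's construction applied to an exponential family is especially clean: the Fisher metric in the natural coordinates equals the Hessian $\partial_i\partial_j\psi(\theta)$ of the potential $\psi$, so the Dombrowski K\"ahler metric on $T\mathcal{E}$, in the complex coordinates $z_k = \theta_k + i\phi_k$ (with $\phi_k$ the fiber coordinate dual to $\partial/\partial\theta_k$), admits a global K\"ahler potential proportional to $\psi(\mathrm{Re}\,z)$. For case $(c)$, $\psi(\theta) = e^\theta$, so the biholomorphism $w = 2\,e^{z/2}$ maps $T\mathscr{P}$ onto $\mathbb{C}\setminus\{0\}$ and pulls the flat K\"ahler potential $|w|^2 = 4\,e^{\mathrm{Re}\,z}$ back to (a scalar multiple of) the Dombrowski potential; hence the map is a K\"ahler isometry onto the free locus of the prescribed torus action. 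Choosing $a = 4\pi$, the $\Gamma(L)$-action $\phi \mapsto \phi + 4\pi k$ becomes trivial in the $w$-coordinate, while the induced $\mathbb{T}^1$-action descends to $w \mapsto e^{2i\pi t}w$, as required. Since $\mathbb{C}$ with the flat metric is connected, simply connected, complete and real-analytic, it is regular.

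Cases $(a)$ and $(b)$ proceed identically in spirit, with the target being a complex projective space. In case $(a)$, $\psi(\theta) = \ln\big(1 + \sum_{k=1}^n e^{\theta_k}\big)$ coincides (up to the relevant convention for the K\"ahler potential) with the potential of the Fubini-Study metric on $\mathbb{P}_n(1)$ in the affine chart $z_{n+1} \neq 0$ expressed in the coordinates $w_k = e^{z_k/2}$. Hence the assignment $F(q_L(\theta,\phi)) := [e^{z_1/2} : \cdots : e^{z_n/2} : 1]$, together with $a = 4\pi$, identifies $T\mathcal{P}_{n+1}^\times/\Gamma(L)$ with the free locus $\{[z]\in \mathbb{P}_n(1) : z_k \neq 0 \text{ for all } k\}$ of $\Phi_n$, and equivariance under $\Phi_X$ is immediate from the construction. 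Case $(b)$ is the one-dimensional analogue with $\psi(\theta) = n\ln(1+e^\theta)$, where the factor $n$ reflects precisely the normalization of the Fubini-Study metric on $\mathbb{P}_1(1/n)$ of holomorphic sectional curvature $1/n$. The main obstacle is this metric-matching step in cases $(a)$ and $(b)$: it requires a careful computation, with the correct normalization of the Fubini-Study K\"ahler potential on $\mathbb{P}_n(c)$, to confirm that the pullback of the Fubini-Study form agrees with the Dombrowski K\"ahler form on the free locus. Regularity of $\mathbb{P}_n(c)$ (connected, simply connected, complete, real-analytic) is classical.
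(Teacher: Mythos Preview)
Your approach is correct and is essentially the one that must be carried out; note, however, that the paper itself does not give a proof of this proposition but simply refers to \cite{molitor-toric}. Your explicit maps $z\mapsto [e^{z_{1}/2},\ldots,e^{z_{n}/2},1]$, $z\mapsto[e^{z/2},1]$, and $z\mapsto 2e^{z/2}$ coincide exactly with the compatible covering maps listed in Proposition~\ref{njewndknknknk}, and your choice $a=4\pi$ for the lattice generator is the right one to make the induced $\mathbb{T}^{n}$-action match $\Phi_{n}$.

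Regarding the step you flag as the ``main obstacle'': the metric-matching does go through cleanly once the normalizations are fixed. In Dombrowski's construction the K\"{a}hler form on $T\mathcal{E}$ has global potential $2\psi(\mathrm{Re}\,z)$ (since $\partial_{z_{i}}\partial_{\bar{z}_{j}}$ of a function of $\mathrm{Re}\,z$ introduces a factor $\tfrac{1}{4}$, while the K\"{a}hler form is $\tfrac{i}{2}\sum h_{ij}\,dz_{i}\wedge d\bar{z}_{j}$). Under $w_{k}=e^{z_{k}/2}$ one has $|w_{k}|^{2}=e^{\theta_{k}}$, so in case~(a) the Dombrowski potential $2\ln(1+\sum e^{\theta_{k}})$ becomes $2\ln(1+\sum|w_{k}|^{2})$, which is precisely the Fubini--Study potential on $\mathbb{P}_{n}(1)$; in case~(b) the potential $2n\ln(1+e^{\theta})$ becomes $2n\ln(1+|w|^{2})$, matching the Fubini--Study potential on $\mathbb{P}_{1}(\tfrac{1}{n})$; and in case~(c) the potential $2e^{\theta}=\tfrac{1}{2}|w|^{2}$ gives the flat metric on $\mathbb{C}$. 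So the only genuine work is bookkeeping, not a conceptual obstacle.
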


	\noindent For a proof and more examples, see \cite{molitor-toric}. 

	In what follows, we will identify the tangent bundle of $\mathcal{E}\in \{\mathcal{P}_{n+1}^{\times},\mathcal{B}(n),
	\mathscr{P}\}$ 
	with $\mathbb{C}^{\textup{dim}(\mathcal{E})}$ via the map $T\mathcal{E}\to \mathbb{C}^{\textup{dim}(\mathcal{E})}$, 
	$\sum\dot{\theta}_{k}\tfrac{\partial}{\partial \theta_{k}}\big\vert_{p(.,\theta)}\mapsto 
	(z_{1},...,z_{\textup{dim}(\mathcal{E})})$, where the $\theta_{k}$'s are natural parameters
	and $z_{k}=\theta_{k}+i\dot{\theta}_{k}$.

\begin{proposition}[\textbf{Complement of Proposition \ref{nfeknwknwknk}}]\label{njewndknknknk}
	In each case below, the pair $(\tau,\kappa)$ is a toric factorization of the indicated exponential family  $\mathcal{E}$. 
	\begin{enumerate}[(a)]
	\item $\mathcal{E}=\mathcal{P}_{n+1}^{\times},$ \,\,\,\,\,\, 
		$\mathbb{C}^{n}=T\mathcal{P}_{n+1}^{\times}\overset{\tau}{\longrightarrow} \mathbb{P}_{n}(1)^{\circ}
		\overset{\kappa}{\longrightarrow} \mathcal{P}_{n+1}^{\times},$
		\begin{eqnarray*}
			\tau(z)=\big[e^{z_{1}/2},...,e^{z_{n}/2},1\,\big], 
			\,\,\,\,\,\,\,\,\,\,\,\,\,\,\,\,\,\,\,\,\kappa([z])(x_{k})=
			\dfrac{|z_{k}|^{2}}{|z_{1}|^{2}+...+|z_{n+1}|^{2}},\,\,\,k=1,2...,n+1.
		\end{eqnarray*}
		\item $\mathcal{E}=\mathcal{B}(n),$ \,\,\,\,\,\, 
			$\mathbb{C}=T\mathcal{B}(n)\overset{\tau}{\longrightarrow} \mathbb{P}_{1}(\tfrac{1}{n})^{\circ}
			\overset{\kappa}{\longrightarrow}\mathcal{B}(n),$
			\begin{eqnarray*}
				\tau(z)	= \big[e^{z/2},1\big], \,\,\,\,\,\,\,\,\,\,\,\,\,\,\,\,\,\,\,\,
				\kappa([z_{1},z_{2}])(k)=
				\binom{n}{k}\dfrac{|z_{1}^{k}z_{2}^{n-k}|^{2}}{(|z_{1}|^{2}+|z_{2}|^{2})^{n}},\,\,\,k=0,1,2...,n.
			\end{eqnarray*}
		\item $\mathcal{E}=\mathscr{P},$ \,\,\,\,\,\,  
			$\mathbb{C}=T\mathscr{P}\overset{\tau}{\longrightarrow} 
			\mathbb{C}^{*}\overset{\kappa}{\longrightarrow} \mathscr{P}$,
			\begin{eqnarray*}
				\tau(z)	= 2e^{z/2}, \,\,\,\,\,\,\,\,\,\,\,\,\,\,\,\,\,\,\,\,
				\kappa(z)(k)=e^{-|z|^{2}/4}\dfrac{1}{k!}\bigg(\dfrac{|z|^{2}}{4}\bigg)^{k},\,\,\,k=0,1,2...  
			\end{eqnarray*}
	\end{enumerate}
\end{proposition}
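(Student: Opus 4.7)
The plan is to construct, in each of the three cases, an explicit toric parametrization $(L,X,F)$ in the sense of Definition \ref{fjenkrkefnkn} whose induced toric factorization is exactly the pair $(\tau,\kappa)$ stated. Since the natural parameters $\theta=(\theta_{1},\ldots,\theta_{n})$ of an exponential family $\mathcal{E}$ are affine coordinates for the exponential connection $\nabla^{(e)}$, the vector fields $\partial/\partial\theta_{k}$ are parallel. In case $(a)$ I would take $L$ to be the lattice generated by $X_{k}=4\pi\,\partial/\partial\theta_{k}$ for $k=1,\ldots,n$; in case $(b)$ by $X_{1}=4\pi\,\partial/\partial\theta$; and in case $(c)$ by $X_{1}=4\pi\,\partial/\partial\theta$. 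The factor $4\pi$ is chosen so that under the identification $z_{k}=\theta_{k}+i\dot\theta_{k}$, the $\Gamma(L)$ action translates $z_{k}$ by $4\pi i\mathbb{Z}$, exactly the period needed for the maps $\tau$ of the statement to be $\Gamma(L)$-invariant (e.g., $e^{(z_{k}+4\pi i)/2}=e^{z_{k}/2}$).

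Having defined $L$, I would check that $\tau$ factors as $\tau=F\circ q_{L}$ for a map $F:M_{L}\to N^{\circ}$, and that $F$ is bijective onto $N^{\circ}$ (for $(a)$, $N^{\circ}$ is the set of classes $[z_{1},\ldots,z_{n+1}]$ with all $z_{j}\neq 0$; for $(b)$ analogous; for $(c)$, $N^{\circ}=\mathbb{C}^{*}$). Holomorphicity is transparent from the exponential formulas. Next, I would verify the equivariance $F\circ(\Phi_{X})_{[t]}=\Phi_{[t]}\circ F$: for instance in case $(a)$,
\begin{equation*}
\tau\bigl(z_{1}+4\pi i t_{1},\ldots,z_{n}+4\pi i t_{n}\bigr)
=\bigl[e^{2i\pi t_{1}}e^{z_{1}/2},\ldots,e^{2i\pi t_{n}}e^{z_{n}/2},1\bigr]=\Phi_{n,[t]}\bigl(\tau(z)\bigr),
\end{equation*}
and analogous one-line computations handle $(b)$ and $(c)$.

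The main obstacle will be checking that $F$ is a K\"ahler isometry, i.e., that the pullback by $\tau$ of the K\"ahler metric on $N$ coincides with the Dombrowski metric $g(q,r)=\mathrm{diag}(h_{F}(\theta),h_{F}(\theta))$ on $TM$. For case $(c)$, this reduces to a direct computation: the Fisher metric on $\mathscr{P}$ in the parameter $\theta=\ln\lambda$ equals $e^{\theta}$, and $\tau^{*}(dw\,d\bar{w})$ with $w=2e^{z/2}$ gives $|w|^{2}|dz|^{2}=e^{\theta}|dz|^{2}$, matching the Dombrowski form. For cases $(a)$ and $(b)$, I would exploit the classical Fisher--Rao embedding of the probability simplex into complex projective space: writing the Fubini--Study metric of $\mathbb{P}_{n}(c)$ as $c^{-1}\partial\bar\partial\ln\langle z,z\rangle$ and evaluating at $z=\tau(w)=(e^{w_{1}/2},\ldots,1)$ yields the potential $c^{-1}\ln\bigl(1+\sum_{k}e^{\theta_{k}}\bigr)=c^{-1}\psi(\theta)$, whose Hessian recovers the Fisher metric (up to the constant $c^{-1}$, which is $1$ in case $(a)$ and $n$ in case $(b)$, matching the curvature normalizations $c=1$ and $c=1/n$ respectively).

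Finally, I would verify $\pi=\kappa\circ\tau$ by a one-line computation in each case: for instance in $(a)$, with $|e^{z_{k}/2}|^{2}=e^{\theta_{k}}$,
\begin{equation*}
(\kappa\circ\tau)(z)(x_{k})=\frac{e^{\theta_{k}}}{1+\sum_{j=1}^{n}e^{\theta_{j}}}=p(x_{k};\theta),
\end{equation*}
and analogously for $(b)$ (yielding the binomial coefficients times $q^{k}(1-q)^{n-k}$ after substituting $q=e^{\theta}/(1+e^{\theta})$) and $(c)$ (yielding $e^{-\lambda}\lambda^{k}/k!$ after substituting $\lambda=e^{\theta}$). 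This establishes that the data $(L,X,F)$ so constructed induce exactly the pair $(\tau,\kappa)$ of the proposition.
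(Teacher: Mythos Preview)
The paper does not give its own proof of this proposition; it is stated without argument, and the surrounding text (after Proposition~\ref{nfeknwknwknk}) simply refers the reader to \cite{molitor-toric} for proofs and further examples. So there is no in-paper argument to compare against.

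Your approach is a correct direct verification along the lines one would expect: choose the parallel lattice generated by $X_{k}=4\pi\,\partial/\partial\theta_{k}$, observe that $\tau$ descends to a holomorphic bijection $F:M_{L}\to N^{\circ}$, check the equivariance $F\circ(\Phi_{X})_{[t]}=\Phi_{[t]}\circ F$ (your one-line computation in case $(a)$ is exactly right, and the other cases are identical), verify that $F$ pulls back the K\"ahler metric on $N$ to the Dombrowski metric, and finally check $\pi=\kappa\circ\tau$. The key conceptual point---that the K\"ahler potential of Fubini--Study at $\tau(z)$ becomes a constant multiple of $\psi(\theta)$, whose Hessian in natural parameters is the Fisher metric---is the right one, and your handling of the curvature normalization ($c=1$ versus $c=1/n$) correctly accounts for the difference between $(a)$ and $(b)$.

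A small caution: when you actually carry out the isometry step, be careful with the precise normalization of the Fubini--Study form (the factor in front of $\partial\bar\partial\ln\langle z,z\rangle$ that yields holomorphic sectional curvature exactly $c$) and with factors of $2$ coming from $|dz|^{2}$ versus $dq^{2}+dr^{2}$; these are bookkeeping issues rather than conceptual gaps, but they are where such computations most often go wrong in practice.
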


%
%

\subsection{Weyl groups.}\label{nkennkfenknk}  In this section, we discuss Weyl groups in the context of torification. We begin 
	by recalling some basic definitions and properties. 
	Let $G$ be a compact Lie group with Lie algebra $\mathfrak{g}$. A subgroup $T$ of $G$ is called a \textit{torus}
	if it is compact, connected and Abelian. A torus $T\subset G$ is said to be \textit{maximal} 
	if $T\subseteq T'$, with $T'$ a torus, implies $T=T'$. 
	Let $G^{0}$ denote the identity component of $G$, and let $T$ be a maximal torus in $G^{0}$
	with Lie algebra $\mathfrak{t}$. The \textit{Weyl group} of $G^{0}$ 
	with respect to $T$, denoted by $W(T)$, is the quotient group $N(T)/T$, where 
	$N(T)=\{g\in G^{0}\,\,\vert\,\,gTg^{-1} = T\}$ is the normalizer of $T$ in $G^{0}$ (since $T$ is normal 
	in $N(T)$, the quotient $N(T)/T$ is naturally a group). If $T$ and $T'$ are maximal tori in $G^{0}$, then the groups 
	$W(T)$ and $W(T')$ are isomorphic. Thus the Weyl group is independent of the choice of a maximal torus. 
	The Weyl group is a finite group that acts on $T$ and $\mathfrak{t}$ as follows: if $n\in N(T)$ is a representative of 
	$w\in W(T)$, then 
        \begin{eqnarray*}
		w\cdot a= nan^{-1} \,\,\,\,\,\,\,\,\textup{and} \,\,\,\,\,\,\,\,w\cdot \xi = \textup{Ad}_{n}\xi, 
	\end{eqnarray*}
	where $a\in T$ and $\xi\in \mathfrak{t}$. Moreover, the following properties hold (see, e.g., \cite{kolk}, Theorem 3.7.1(iv) and 
	\cite{bott}, p.74):
	\begin{description}
		\item[(P1)] $\mathfrak{g}=\cup_{g\in G^{0}}\textup{Ad}_{g}(\mathfrak{t})$. 
		\item[(P2)] Let $\xi\in \mathfrak{g}$. 
			If $\xi=\textup{Ad}_{g}\eta=\textup{Ad}_{g'}\eta'$, 
			where $g,g'\in G^{0}$ and $\eta,\eta'\in \mathfrak{t}$, then 
			there is $w\in W$ such that $\eta=w\cdot \eta'$. 
	\end{description}
	Property (P1) is sometimes referred to as the \textit{infinitesimal conjugation Theorem}, since it is an 
	infinitesimal version of the classical conjugation Theorem of Cartan for maximal tori. 

	For our purposes, it is convenient to consider a sightly more general definition of the Weyl group, which due to Segal
	\cite{segal}. A closed subgroup $S\subseteq G$ is called a \textit{Cartan subgroup}
	if it contains an element whose powers are dense in $S$, and if $S$ is of finite index in its normalizer 
	$N(S)=\{g\in G\,\,\vert\,\,gSg^{-1}=S\}$. The finite group $W(S)=N(S)/S$ is, by definition, the \textit{Weyl group} of $S$ 
	(in the sense of Segal). When $G=G^{0}$, that is, when $G$ is connected, 
	Cartan subgroups are precisely the maximal tori of $G$, and thus, Segal's definition 
	generalizes the classical one to the nonconnected case. Note, however, that Segal's definition \textit{depends} 
	on the choice of a Cartan subgroup. 

	We now discuss Weyl groups in the context of torification. Let $\Phi:\mathbb{T}^{n}\times N\to N$ be a 
	K\"{a}hler toric manifold, with K\"{a}hler metric $g$. Let $\textup{Aut}(N,g)$ be the group of holomorphic isometries of $N$
	endowed with the compact-open topology; it is a compact Lie group whose natural action on $N$ is smooth 
	(see Proposition \ref{nfekwnknefknk}). We assume that $N$, equipped with $\Phi$, is a regular torification of a dually flat 
	manifold $(M,h,\nabla)$. Let $\textup{Diff}(M,h,\nabla)$ be the group of affine diffeomorphisms of $(M,\nabla)$ 
	that are also isometries with respect to $h$. Let $\tau:TM\to N^{\circ}$ be a compatible covering map 
	(see Definition \ref{fjenkrkefnkn}). Given $\psi\in \textup{Diff}(M,h,\nabla)$, we will denote by 
	$\textup{lift}_{\tau}(\psi)$ the lift of $\psi$ with respect to $\psi$ (see Section \ref{neknkenkfnkn}). 
	Let $S\subset \textup{Aut}(N,g)$ be the image of $\mathbb{T}^{n}$ under the map 
	$a\mapsto \Phi_{a}$. 	

	The following properties are proven in \cite{molitor-weyl}.
	\begin{description}
		\item[(P3)] $S$ is a maximal torus and a Cartan subgroup of $\textup{Aut}(N,g)$. 
	\end{description}

	Let $W(S)=N(S)/S$ be the Weyl group of $\textup{Aut}(N,g)$ associated to $S$ (in the sense of Segal). 
	We will denote by $[\psi]$ the equivalence class of $\psi\in N(S)$ in $W(S)$. 

	\begin{description}
		\item[(P4)] $\textup{Diff}(M,h,\nabla)\to W(S),\,\,\,\, \psi\mapsto \big[\textup{lift}_{\tau}(\psi)\big]$
		is a group isomorphism. 
	\end{description}

	Let $\textup{Proj}$ denote the set of compatible projections $\kappa:N^{\circ}\to M$ (see Definition \ref{fjenkrkefnkn}). 

	\begin{description}
		\item[(P5)] The map $\textup{Diff}(M,h,\nabla)\times \textup{Proj}\to \textup{Proj}$,
			$(\psi,\kappa) \mapsto \psi\circ \kappa$, is a free and transitive group action. 
	\end{description}

	We now specialize to the case in which $M=\mathcal{E}$ is an exponential family defined over a finite set 
	$\Omega=\{x_{1},...,x_{m}\}$, with elements of the form 
	$p(x;\theta)=e^{C(x)+\langle \theta,F(x)\rangle-\psi(\theta)},$
	where $\theta\in \mathbb{R}^{n}$, 
	$C:\Omega\to \mathbb{R}$, $F=(F_{1},...,F_{n}):\Omega\to \mathbb{R}^{n}$, 
	$\langle u,v\rangle=u_{1}v_{1}+...+u_{n}v_{n}$ is the Euclidean pairing in 
	$\mathbb{R}^{n}$ and $\psi:\mathbb{R}^{n}\to \mathbb{R}$. 
	It is assumed that the functions $1,F_{1},...,F_{n}$ are linearly independent. 
	Let $h_{F}$ and $\nabla^{(e)}$ be, respectively, the Fisher metric and exponential connection on $\mathcal{E}$.
	We continue to assume that $\Phi:\mathbb{T}^{n}\times N\to N$ is a K\"{a}hler toric manifold and a regular 
	torification of $(\mathcal{E},h_{F},\nabla^{(e)})$. Let $\mathbb{S}_{m}$ be the group of permutations of $\{1,...,m\}$. 
	Given $\sigma\in \mathbb{S}_{m}$, let $T_{\sigma}$ denote the diffeomorphism of $\mathcal{E}$ defined 
	by $T_{\sigma}(p)(x_{k})=p(x_{\sigma(k)})$, where $p\in \mathcal{E}$ and $k\in \{1,...,m\}$. 
	We shall say that a diffeomorphism $\varphi$ of $\mathcal{E}$ is a \textit{permutation} 
	if there is $\sigma\in \mathbb{S}_{m}$ such that $\varphi=T_{\sigma}$. 
	Let $\textup{Perm}(\mathcal{E})$ be the group of permutations of $\mathcal{E}$. 
	When $F$ is injective, it is easy to see that the map $\textup{Perm}(\mathcal{E})\to \mathbb{S}_{m}$, 
	$T_{\sigma}\to \sigma^{-1}$ defines a group isomorphism from 
	$\textup{Perm}(\mathcal{E})$ to a subgroup of $\mathbb{S}_{m}$. In this case, we will often 
	regard $\textup{Perm}(\mathcal{E})$ as a subgroup of $\mathbb{S}_{m}$. 

	\begin{description}
	\item[(P6)] If $F:\Omega\to \mathbb{R}^{n}$ is injective, then 
		$\textup{Diff}(\mathcal{E},h_{F},\nabla^{(e)})=\textup{Perm}(\mathcal{E})$. 
	\end{description}

	In the case in which $F$ is injective, (P4) and (P6) imply that 
	the Weyl group of $\textup{Aut}(N,g)$ associated to $S$ is isomorphic to the group of 
	permutations of $\mathcal{E}$. For instance, the group of permutations of $\mathcal{P}_{m}^{\times}$ (see Example \ref{exa:5.5}) 
	is obviously isomorphic to the group $\mathbb{S}_{m}$. The complex projective space $\mathbb{P}_{m-1}(c)$ 
	of complex dimension $m-1$ and holomorphic sectional curvature $c=1$ is a regular torification 
	of $\mathcal{P}_{m}^{\times}$. The group of holomorphic isometries of $\mathbb{P}_{m-1}(c)$ is the projective 
	unitary group $\textup{PU}(m)$, which is connected. Therefore the Weyl group of 
	$\textup{PU}(m)$ is isomorphic to $\mathbb{S}_{m}$ (see also Proposition \ref{neknknekfnknn}(ii)). 

	Let $\mathcal{A}_{\mathcal{E}}$ be the vector space of real-valued functions $X$ on $\Omega$ spanned by $F_{1},...,F_{n}$ 
	and the constant function 1.

	\begin{description}
	\item[(P7)] If $F:\Omega\to \mathbb{R}^{n}$ is injective, then the formula 
		$(\sigma\cdot X)(x_{k})=X(x_{\sigma^{-1}(k)})$ defines a linear action of $\textup{Perm}(\mathcal{E})$ on 
		$\mathcal{A}_{\mathcal{E}}$. 
	\end{description}

	Finally, it follows from (P5) and (P6) that: 

\begin{description}
	\item[(P8)] Suppose $F:\Omega\to \mathbb{R}^{n}$ injective. If $\kappa$ and $\kappa'$ are compatible 
		projections, then there is $\sigma\in \textup{Perm}(\mathcal{E})\subset\mathbb{S}_{m}$ such that 
		$\kappa(p)(x_{k})=\kappa'(p)(x_{\sigma(k)})$ for all $p\in N^{\circ}$ and all $k\in \{1,...,m\}$. 
\end{description}

\section{Non-equivariance one-cocycle}\label{nfenwkeknwkndk}

	The purpose of this section is to define a cocycle that measures the non-equivariance of 
	the momentum map $\moment$ of a K\"{a}hler toric manifold with respect to the natural action of a 
	certain Weyl group (see Definition \ref{neknwkenknk}).

	Let $\Phi:\mathbb{T}^{n}\times N\to N$ be a K\"{a}hler toric manifold, with K\"{a}hler metric $g$ and momentum map 
	$\moment:N\to \mathbb{R}^{n}$. Let $\textup{Aut}(N,g)$ be the group of holomorphic isometries of $N$ endowed with the
	compact-open topology; it is a compact Lie group whose natural action on $N$ is smooth 
	(see Proposition \ref{nfekwnknefknk}). Below, we will use the same notation $\Phi$ to denote either the map $\mathbb{T}^{n}\times N\to N$ 
	or the corresponding group homomorphism $\mathbb{T}^{n}\to \textup{Aut}(N,g)$. Let $\mathfrak{aut}(N,g)$ be the Lie 
	algebra of $\textup{Aut}(N,g)$. It follows immediately from (F1) and (F3) that if $G$ 
	is a closed subgroup of $\textup{Aut}(N,g)$, then its natural action on $N$ 
	is Hamiltonian (see also Proposition \ref{ndkndkneksdnknk}). 
	In particular, the natural action of $\textup{Aut}(N,g)$ on $N$ has an equivariant momentum map 
	that we will denote by $\overline{\moment}:N\to \mathfrak{aut}(N,g)^{*}$. 

\begin{lemma}\label{nefkkwknefknwknk}
	The map $\Phi:\mathbb{T}^{n}\to \textup{Aut}(N,g)$ defines a Lie group 
	isomorphism between $\mathbb{T}^{n}$ and a maximal torus $T$ in $\textup{Aut}(N,g)$. 
\end{lemma}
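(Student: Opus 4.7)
The plan is to unpack the statement into three assertions and handle each one in turn: (i) $\Phi:\mathbb{T}^n \to \textup{Aut}(N,g)$ is an injective Lie group homomorphism; (ii) its image $T$ is a closed subgroup on which $\Phi$ restricts to an isomorphism of Lie groups; (iii) this torus $T$ is maximal. The first two steps are essentially formal consequences of compactness and effectivity, while the maximality is where the symplectic-topological hypotheses (F1)--(F3) do the real work.

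For (i), the smoothness of the action $\mathbb{T}^n \times N \to N$ together with the Lie group structure on $\textup{Aut}(N,g)$ (established in the appendix and recalled just before the lemma as Proposition \ref{nfekwnknefknk}) shows that the associated homomorphism $\Phi:\mathbb{T}^n \to \textup{Aut}(N,g)$ is smooth. Since $(N,\omega,\Phi,\moment)$ is a K\"{a}hler toric manifold, the action is by definition effective, which is exactly the statement that $\Phi$ is injective. For (ii), the image $T=\Phi(\mathbb{T}^n)$ is the continuous image of a compact group, hence compact, hence closed in $\textup{Aut}(N,g)$, and thus a Lie subgroup. Then $\Phi:\mathbb{T}^n \to T$ is a continuous bijective homomorphism between compact Lie groups, which is automatically a homeomorphism and therefore a Lie group isomorphism. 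In particular $T$ is a torus of dimension $n$.

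The main content is (iii). Suppose $T\subseteq T'$ with $T'$ a torus in $\textup{Aut}(N,g)$. Because $\textup{Aut}(N,g)$ acts on $N$ by isometries, its natural action is effective, and so is the restricted action of $T'$ on $N$. Moreover, $N$ is a symplectic toric manifold, hence simply connected by (F3), so $H^{1}_{dR}(N,\mathbb{R})=0$; combined with compactness of $T'$, (F1) gives that the $T'$-action is Hamiltonian. Applying (F2) to this effective Hamiltonian action of the torus $T'$ yields $\dim T' \leq \tfrac{1}{2}\dim N = n$. Since $T\subseteq T'$ is a closed subgroup with $\dim T = n = \dim T'$ and $T'$ is connected, we conclude $T=T'$, establishing maximality.

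The step I expect to be the only genuinely non-formal one is the dimension count in (iii): without the effectivity of the $T'$-action we could not invoke (F2), and without the simple connectivity of $N$ we could not invoke (F1) to promote the symplectic action of $T'$ to a Hamiltonian one. Everything else is a routine application of the fact that a continuous injective homomorphism between compact Lie groups is an isomorphism onto its image.
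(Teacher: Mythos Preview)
Your proof is correct and follows essentially the same approach as the paper: the paper invokes its appendix Lemma~\ref{nknkdnknskndknk} to handle your steps (i)--(ii) in one stroke, and then argues maximality exactly as you do, using (F3) then (F1) to get that the $T'$-action is Hamiltonian and (F2) for the dimension bound. The only cosmetic difference is that you unpack the ``$\Phi$ is a Lie group isomorphism onto a closed subgroup'' step directly via compactness, whereas the paper cites the appendix lemma.
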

\begin{proof}
	Let $T=\Phi(\mathbb{T}^{n})\subset \textup{Aut}(N,g)$.
	By Lemma \ref{nknkdnknskndknk}, $T$ is a closed Lie subgroup of $\textup{Aut}(N,g)$ and 
	the map $\mathbb{T}^{n}\to T$, $a\mapsto \Phi_{a}$, is a Lie group isomorphism. Thus 
	$T$ is a torus of dimension $n$. To see that it is maximal, let $T'$ be a torus in 
	$\textup{Aut}(N,g)$ such that $T\subseteq T'$. By the discussion above, the
	natural action of $T'$ on $N$ is Hamiltonian. By (F2), this forces 
	$\textup{dim}(T')\leq \textup{dim}(N)/2=n=\textup{dim}(T)$. 
	It follows from this and the inclusion $T\subseteq T'$ that $T=T'.$
\end{proof}

	Since $T=\Phi(\mathbb{T}^{n})$ is maximal in $\textup{Aut}(N,g)$, it is also maximal in the 
	identity component $\textup{Aut}(N,g)^{0}$. Let $W=W(T)$ be the Weyl group of $\textup{Aut}(N,g)^{0}$ associated to $T$ 
	(thus $W$ is the quotient group $N(T)/T$, where $N(T)$ is the normalizer of $T$ in $\textup{Aut}(N,g)^{0}$). 
	We will denote by $[\phi]$ the equivalence class of $\phi\in N(T)\subset \textup{Aut}(N,g)^{0}$ in $W$.

	Let $\Phi_{*}:\mathbb{R}^{n}\to \mathfrak{aut}(N,g)$ be the 
	derivative of $\Phi$ at the identity element. By Lemma \ref{nefkkwknefknwknk}, $\Phi_{*}$ is a 
	bijection between $\mathbb{R}^{n}$ and the Lie algebra 
	$\mathfrak{t}$ of $T$. Since $W$ acts linearly on $\mathfrak{t}$, there is a unique 
	linear action of $W$ on $\mathbb{R}^{n}$ such that 
	\begin{eqnarray*}
		\Phi_{*}(w\cdot \xi)=w\cdot (\Phi_{*}\xi)
	\end{eqnarray*}
	for all $w\in W$ and $\xi\in \mathbb{R}^{n}$. Let $\Phi^{*}:\mathfrak{aut}(N,g)^{*}\to \mathbb{R}^{n}$ be the map defined by 
	\begin{eqnarray*}
		\langle \Phi^{*}(\alpha), \xi\rangle=\alpha(\Phi_{*}\xi)
	\end{eqnarray*}
	for all $\alpha\in \mathfrak{aut}(N,g)^{*}$ and $\xi\in \mathbb{R}^{n}$, where $\langle\,,\, \rangle$ is the Euclidean pairing 
	in $\mathbb{R}^{n}$. 

	The maps $\moment$ and $\Phi^{*}\circ \overline{\moment}$ are both momentum maps for the same torus action. Since 
	$N$ is connected, there is $C\in \mathbb{R}^{n}$ such that 
	\begin{eqnarray}\label{neknknekwnknk}
		\moment=\Phi^{*}\circ \overline{\moment}+C.
	\end{eqnarray}

\begin{lemma}\label{nkekndkneknk}
	For every $\xi\in \mathbb{R}^{n}$ and $w=[\phi]\in W$, we have: 
	\begin{eqnarray*}
		\moment^{\xi}\circ \phi^{-1}-\moment^{w\cdot \xi}=\langle C,\xi-w\cdot \xi\rangle. 
	\end{eqnarray*}
\end{lemma}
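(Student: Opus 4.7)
The plan is to compute both sides of the stated identity by unpacking $\moment^\xi$ through the relation $\moment = \Phi^{*}\circ\overline{\moment}+C$ established in \eqref{neknknekwnknk}, and then to exploit the equivariance of $\overline{\moment}$ together with the definition of the $W$-action.

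First I would write, for any $p\in N$,
\begin{eqnarray*}
\moment^{\xi}(p)=\langle \moment(p),\xi\rangle
=\overline{\moment}(p)(\Phi_{*}\xi)+\langle C,\xi\rangle,
\end{eqnarray*}
using the definition of $\Phi^{*}$. Applying this with $p$ replaced by $\phi^{-1}(p)$ and using the equivariance $\overline{\moment}\circ\phi^{-1}=\textup{Ad}_{\phi^{-1}}^{*}\circ \overline{\moment}$ of the momentum map $\overline{\moment}$ of the $\textup{Aut}(N,g)$-action, together with the pairing identity $\langle \textup{Ad}_{g}^{*}\alpha,\eta\rangle=\langle\alpha,\textup{Ad}_{g^{-1}}\eta\rangle$ recalled in Section \ref{nfeknkenfkndkn}, I get
\begin{eqnarray*}
\moment^{\xi}(\phi^{-1}(p))=\overline{\moment}(p)\bigl(\textup{Ad}_{\phi}(\Phi_{*}\xi)\bigr)+\langle C,\xi\rangle.
\end{eqnarray*}

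Next I would use that $\phi\in N(T)$ represents $w\in W=N(T)/T$. By definition of the Weyl-group action on $\mathfrak{t}$ (Section \ref{nkennkfenknk}), $\textup{Ad}_{\phi}$ preserves $\mathfrak{t}$ and acts there as $w$. Since $\Phi_{*}:\mathbb{R}^{n}\to\mathfrak{t}$ intertwines the $W$-actions by construction, $\textup{Ad}_{\phi}(\Phi_{*}\xi)=w\cdot(\Phi_{*}\xi)=\Phi_{*}(w\cdot\xi)$. Substituting gives
\begin{eqnarray*}
\moment^{\xi}(\phi^{-1}(p))=\overline{\moment}(p)\bigl(\Phi_{*}(w\cdot\xi)\bigr)+\langle C,\xi\rangle.
\end{eqnarray*}
On the other hand, applying the first displayed formula to $w\cdot\xi$ yields
\begin{eqnarray*}
\moment^{w\cdot\xi}(p)=\overline{\moment}(p)\bigl(\Phi_{*}(w\cdot\xi)\bigr)+\langle C,w\cdot\xi\rangle,
\end{eqnarray*}
and subtracting these two relations gives exactly $\moment^{\xi}\circ\phi^{-1}-\moment^{w\cdot\xi}=\langle C,\xi-w\cdot\xi\rangle$, as required.

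The argument is essentially a careful bookkeeping of definitions, so there is no real obstacle; the only subtle point is matching conventions for $\phi$ versus $\phi^{-1}$ in the equivariance of $\overline{\moment}$ and in the coadjoint pairing, and verifying that the $W$-action on $\mathbb{R}^{n}$ transported from $\mathfrak{t}$ via $\Phi_{*}$ is precisely the one that makes the intertwining $\Phi_{*}(w\cdot\xi)=\textup{Ad}_{\phi}(\Phi_{*}\xi)$ hold for any representative $\phi$ of $w$, which is immediate from the definitions recalled just before the statement.
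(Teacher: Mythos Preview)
Your proof is correct and follows essentially the same approach as the paper: both arguments unpack $\moment^\xi$ via \eqref{neknknekwnknk}, invoke the equivariance of $\overline{\moment}$, and use the defining relation $\Phi_{*}(w\cdot\xi)=\textup{Ad}_{\phi}(\Phi_{*}\xi)$ to compare $\moment^{\xi}\circ\phi^{-1}$ with $\moment^{w\cdot\xi}$. The only cosmetic difference is that the paper starts from $\moment^{w\cdot\xi}$ and works toward $\moment^{\xi}\circ\phi^{-1}$, while you go in the reverse direction.
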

\begin{proof}
	It follows from \eqref{neknknekwnknk} and the equivariance of $\overline{\moment}$ that 
	\begin{eqnarray*}
		\moment^{w\cdot \xi} &=& \overline{\moment}^{w\cdot \Phi_{*}\xi} +\langle C,w\cdot \xi\rangle \\
		&=& \overline{\moment}^{\textup{Ad}_{\phi}\Phi_{*}\xi}+\langle C,w\cdot \xi\rangle \\
		&=& \overline{\moment}^{\Phi_{*}\xi}\circ \phi^{-1}+\langle C,w\cdot \xi\rangle \\
		&=& (\moment^{\xi}-\langle C,\xi\rangle)\circ \phi^{-1}+\langle C,w\cdot \xi\rangle \\
		&=&	 \moment^{\xi}\circ \phi^{-1}-\langle C,\xi-w\cdot \xi\rangle. 
	\end{eqnarray*}
	The result follows. 
\end{proof}

\begin{definition}\label{neknwkenknk}
	The \textit{non-equivariance one-cocycle} (or simply \textit{one-cocycle}) 
	of $\moment$ is the map $\sigma_{\moment}:W\to (\mathbb{R}^{n})^{*}$ defined by
	\begin{eqnarray}\label{nekdnkfnnkenefkkn}
		\sigma_{\moment}(w)(\xi)=\moment^{\xi}\circ \phi^{-1}-\moment^{w\cdot \xi}, 
	\end{eqnarray}
	where $\xi\in \mathbb{R}^{n}$ and $w=[\phi]\in W$. 
\end{definition}
	
	It follows from Lemma \ref{nkekndkneknk} that the one-cocycle $\sigma_{\moment}$ is well-defined and 
	can be computed easily from the momentum map $\overline{\moment}$. 
	We call $\sigma_{\moment}$ a ``cocycle" because 
	\begin{eqnarray}\label{neknwkenkfnknk}
		\sigma_{\moment}(ww')(\xi)=\sigma_{\moment}(w)(w'\cdot \xi)+\sigma_{\moment}(w')(\xi)
	\end{eqnarray}
	for all $w,w'\in W$ and $\xi\in \mathbb{R}^{n}$. 
	Note that $\sigma_{\moment-C}$ is identically zero. Thus it is always possible to add 
	a constant to $\moment$ so that the corresponding one-cocycle is zero. \\

	The rest of this section is devoted to the explicit calculation of $\sigma_{\moment}$ when 
	$N=\mathbb{P}_{n}(c)$ is the complex projective space of complex dimension $n$ and holomorphic 
	sectional curvature $c>0$, equipped with the torus action $\Phi:\mathbb{T}^{n}\times \mathbb{P}_{n}(c)\to \mathbb{P}_{n}(c)$
	defined by 
	\begin{eqnarray*}
		\Phi([t],[z])=[e^{2i\pi t_{1}}z_{1},....,e^{2i\pi t_{n}}z_{n},z_{n+1}],
	\end{eqnarray*}
	where $[t]=[t_{1},...,t_{n}]\in \mathbb{T}^{n}=\mathbb{R}^{n}/\mathbb{Z}^{n}$ and $[z]=[z_{1},...,z_{n+1}]\in \mathbb{P}_{n}(c)$ 
	(homogeneous coordinates). The action $\Phi$ is effective, isometric and Hamiltonian, with equivariant momentum map 
	$\moment_{c}:\mathbb{P}_{n}(c)\to \mathbb{R}^{n}$ given by 
	\begin{eqnarray*}
		\moment_{c}([z])=-\dfrac{4\pi}{c}\bigg(\dfrac{|z_{1}|^{2}}{\langle z,z\rangle},...,
		\dfrac{|z_{n}|^{2}}{\langle z,z\rangle}\bigg), 
	\end{eqnarray*}
	where $\langle z,w\rangle = \overline{z}_{1}w_{1}+...+\overline{z}_{n+1}w_{n+1}$ is the standard Hermitian product 
	on $\mathbb{C}^{n+1}$. Below, we will regard $\mathbb{P}_{n}(c)$ as a K\"{a}hler toric manifold for the torus 
	action $\Phi$ and momentum map $\moment_{c}$. 

	Let $PU(n+1)$ be the projective unitary group that we identify with the 
	quotient group $SU(n+1)/\mathbb{Z}_{n+1}$, where 
	$\mathbb{Z}_{n+1}=\{\varepsilon \textup{I}_{n+1}\,\,\vert\,\,\varepsilon\in \mathbb{C},\,\,\varepsilon^{n+1}=1\}$ and 
	$I_{n+1}$ is the identity matrix. 
	The Lie algebra of $PU(n+1)$ is naturally identified with $\mathfrak{su}(n+1):=\textup{Lie}(SU(n+1))$ via the derivative 
	at the neutral element of the quotient map $SU(n)\to PU(n)$, $A\mapsto \mathbb{Z}_{n+1}A$ (here 
	$\mathbb{Z}_{n+1}A$ denotes the equivalence class of $A\in SU(n+1)$ in $PU(n+1)$).

	Let $\textup{Aut}(\mathbb{P}_{n}(c))$ denote the group of holomorphic isometries of $\mathbb{P}_{n}(c)$. 
	Given a complex matrix $A$ of size $(n+1)\times (n+1)$, we will denote by 
	$\phi_{A}$ the map $\mathbb{P}_{n}(c)\to \mathbb{P}_{n}(c)$ defined by $\phi_{A}([z])=[Az]$.
	It is well-known that the map 
	\begin{eqnarray*}
		\left\lbrace
		\begin{array}{lll}
			PU(n+1)	            &\to     & \textup{Aut}(\mathbb{P}_{n}(c)), \\[0.5em]
			\mathbb{Z}_{n+1}A   &\mapsto & \phi_{A},
		\end{array}
		\right.
	\end{eqnarray*}
	is a Lie group isomorphism. The natural action of $\textup{Aut}(\mathbb{P}_{n}(c))$ on 
	$\mathbb{P}_{n}(c)$ is Hamiltonian. An equivariant momentum map 
	$\overline{\moment}_{c}:\mathbb{P}_{n}(c)\to \mathfrak{su}(n+1)^{*}$ is given by
	\begin{eqnarray*}
		\big\langle\,\overline{\moment}_{c}([z]),\xi\,\big\rangle 
		= -\dfrac{2i}{c}\dfrac{\langle \xi(z),z\rangle}{\langle z,z\rangle},
	\end{eqnarray*}
	where $[z]\in \mathbb{P}_{n}(c)$, $\xi\in \mathfrak{su}(n+1)$, and where we use the same notation $\langle\,,\,\rangle$ 
	to denote the usual Hermitian product on $\mathbb{C}^{n+1}$ (linear in the second entry) and the natural pairing 
	between $\mathfrak{su}(n+1)$ and its dual. 
	
	Let $T\subset PU(n+1)$ be the image of $\mathbb{T}^{n}$ under the map $\Phi$ (it is a maximal torus in $PU(n+1)$ by 
	Lemma \ref{nefkkwknefknwknk}). Let $N(T)$ be the normalizer of $T$ in $PU(n+1)$ and let $W=W(T)=N(T)/T$ be the corresponding Weyl group. 
	Given $A\in SU(n+1)$ such that $\mathbb{Z}_{n+1}A\in N(T)$, we will denote by $T(\mathbb{Z}_{n+1}A)$ 
	the equivalence class of $\mathbb{Z}_{n}A$ in $W$.

	Let $\mathbb{S}_{n+1}$ be the group of permutations of $\{1,...,n+1\}$. Given $\sigma\in \mathbb{S}_{n+1}$, we will 
	denote by $E_{\sigma}\in SU(n+1)$ the matrix defined by 
	\begin{eqnarray}\label{nvenknkenwknekn}
		(E_{\sigma})_{ab}= 
		\textup{sign}(\sigma)^{\delta_{a\,n+1}}\delta_{a\sigma(b)}=
		\left\lbrace
	\begin{array}{lll}
		\delta_{a\sigma(b)}	                 &\textup{if} & a<n+1,\\
		\textup{sign}(\sigma)\delta_{a\sigma(b)} &\textup{if} & a=n+1,
	\end{array}
	\right.
	\end{eqnarray}
	where $\textup{sign}(\sigma)\in \{-1,1\}$ is the sign of $\sigma$ and $\delta_{ij}$ denotes the Kronecker Delta. 
	For instance, when $n=2$, there are $\textup{Card}\,\mathbb{S}_{n+1}=6$ matrices of the form $E_{\sigma}$:
	\begin{eqnarray*}
		&&
		\begin{bmatrix}	
			1 & 0 & 0 \\
			0 & 1 & 0 \\
			0 & 0 & 1
		\end{bmatrix},
	\,\,\,\,\,\,\,\,
	\begin{bmatrix}	
		0 & 1 & 0\\
		1 & 0 & 0\\
		0 & 0 &  -1
	\end{bmatrix},
	\,\,\,\,\,\,\,\,
	\begin{bmatrix}	
		0 & 0 & 1\\
		0 & 1 & 0\\
		-1 & 0 & 0
	\end{bmatrix},
	\,\,\,\,\,\,\,\,
	\begin{bmatrix}	
		1 & 0 & 0\\
		0 & 0 & 1\\
		0 & -1 &0
	\end{bmatrix},\\[0.5em]
		&&
	\begin{bmatrix}	
		0 & 1 & 0\\
		0 & 0 & 1\\
		1 & 0 & 0
	\end{bmatrix},
	\,\,\,\,\,\,\,\,
	\begin{bmatrix}	
		0 & 0 & 1\\
		1 & 0 & 0\\
		0 & 1 & 0
	\end{bmatrix}.
	\end{eqnarray*}

	Below we will adopt the following convention: if $\xi=(\xi_{1},...,\xi_{n})\in \mathbb{R}^{n}$, then $\xi_{n+1}:=0$. 
	Thus, if $\sigma\in \mathbb{S}_{n+1}$, it makes sense to write $\xi_{\sigma(k)}$ for all $k\in \{1,...,n+1\}$. 

\begin{proposition}\label{neknknekfnknn}
	Let $\mathbb{P}_{n}(c)$ be the complex projective space, regarded as a K\"{a}hler toric manifold for the torus action $\Phi$ and momentum map 
	$\moment_{c}$ (see above). Let $W$ be the Weyl group of $\textup{Aut}(\mathbb{P}_{n}(c))=PU(n+1)$ associated to the 
	maximal torus $T=\Phi(\mathbb{T}^{n})$. The following hold. 
	\begin{enumerate}[(i)]
		\item $\moment_{c}=\Phi^{*}\circ \overline{\moment}_{c}+C_{c}$, 
			where $C_{c}:=-\tfrac{4\pi}{c}\tfrac{1}{n+1}(1,...,1)\in \mathbb{R}^{n}$.
		\item $\mathbb{S}_{n+1}\to W$, $\sigma\mapsto T(\mathbb{Z}_{n}E_{\sigma})$ is a group isomorphism. 
		\item $\sigma\cdot \xi=(\xi_{\sigma^{-1}(1)}-\xi_{\sigma^{-1}(n+1)},...,\xi_{\sigma^{-1}(n)}
			-\xi_{\sigma^{-1}(n+1)})$ for all $\sigma\in \mathbb{S}_{n+1}=W$ and $\xi=(\xi_{1},...,\xi_{n})\in \mathbb{R}^{n}$. 
		\item $\sigma_{\moment_{c}}(\sigma)(\xi)=-\tfrac{4\pi}{c}\xi_{\sigma^{-1}(n+1)}$ for all 
			$\sigma\in \mathbb{S}_{n+1}=W$ and $\xi=(\xi_{1},...,\xi_{n})\in \mathbb{R}^{n}$.
	\end{enumerate}
\end{proposition}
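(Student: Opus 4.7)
The plan is to handle parts (i)--(iv) in order, since (i) pins down the constant $C_{c}$ that feeds into (iv) through Lemma \ref{nkekndkneknk}, and (ii) identifies $W$ with a concrete permutation group so that (iii) and (iv) become explicit. The common computational ingredient is the infinitesimal generator $\Phi_{*}:\mathbb{R}^{n}\to\mathfrak{su}(n+1)$: because $\Phi$ is the projectivization of the diagonal $U(n+1)$-action $\operatorname{diag}(e^{2\pi it_{1}},\ldots,e^{2\pi it_{n}},1)$, the canonical lift to the traceless Lie algebra $\mathfrak{su}(n+1)$ requires subtracting the mean of the diagonal, giving
\[
\Phi_{*}\xi=2\pi i\operatorname{diag}\!\Bigl(\xi_{1}-\tfrac{|\xi|}{n+1},\ldots,\xi_{n}-\tfrac{|\xi|}{n+1},-\tfrac{|\xi|}{n+1}\Bigr),
\]
where $|\xi|:=\sum_{k=1}^{n}\xi_{k}$. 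Substituting this into the formula for $\overline{\moment}_{c}$ and simplifying $\langle \Phi_{*}\xi(z),z\rangle$, the diagonal sum splits as $\moment_{c}^{\xi}([z])+\tfrac{4\pi}{c(n+1)}|\xi|$, and the second term is exactly $-\langle C_{c},\xi\rangle$; this establishes (i).

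For (ii), I would verify in turn that $E_{\sigma}\in SU(n+1)$ (the $\operatorname{sign}(\sigma)$ correction in the last row is precisely what forces $\det E_{\sigma}=1$), that $E_{\sigma}$ normalizes $T$ (conjugation by a monomial matrix preserves the diagonal torus), and that $\sigma\mapsto T(\mathbb{Z}_{n+1}E_{\sigma})$ is a homomorphism (the products $E_{\sigma}E_{\tau}$ and $E_{\sigma\tau}$ agree up to an element of $\mathbb{Z}_{n+1}$). Injectivity follows because no nontrivial permutation matrix is a scalar multiple of a diagonal matrix. Surjectivity follows from the classical fact that the normalizer of the diagonal torus in $U(n+1)$ consists of monomial matrices, so any class in $N(T)/T$ admits a representative of the form $E_{\sigma}$ after multiplication by a central scalar.

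For (iii), the action of $w=[\phi_{E_{\sigma}}]$ on $\xi$ is determined by $\Phi_{*}(w\cdot\xi)=\operatorname{Ad}_{E_{\sigma}}(\Phi_{*}\xi)$. Conjugation of $\operatorname{diag}(d_{1},\ldots,d_{n+1})$ by the monomial matrix $E_{\sigma}$ produces $\operatorname{diag}(d_{\sigma^{-1}(1)},\ldots,d_{\sigma^{-1}(n+1)})$, the $\pm 1$ factors from the last row squaring out on the diagonal. Matching this permuted diagonal against the explicit form of $\Phi_{*}(w\cdot\xi)$ from (i) and reading off the last entry to solve for the normalization yields the stated formula, using $\xi_{n+1}=0$. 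Part (iv) then follows by substituting (i) and (iii) into Lemma \ref{nkekndkneknk}:
\[
\sigma_{\moment_{c}}(\sigma)(\xi)=\langle C_{c},\xi-\sigma\cdot\xi\rangle=-\tfrac{4\pi}{c(n+1)}\sum_{k=1}^{n}\bigl(\xi_{k}-\xi_{\sigma^{-1}(k)}+\xi_{\sigma^{-1}(n+1)}\bigr).
\]
A relabelling argument using $\sum_{k=1}^{n}\xi_{\sigma^{-1}(k)}=\sum_{j=1}^{n+1}\xi_{j}-\xi_{\sigma^{-1}(n+1)}$ together with $\xi_{n+1}=0$ collapses the inner sum to $(n+1)\xi_{\sigma^{-1}(n+1)}$, producing the claimed coefficient $-\tfrac{4\pi}{c}\xi_{\sigma^{-1}(n+1)}$.

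I expect the main obstacle to lie in part (ii), specifically the simultaneous bookkeeping of the $\operatorname{sign}(\sigma)$ correction needed for $E_{\sigma}\in SU(n+1)$ and the $\mathbb{Z}_{n+1}$-quotient used to model $PU(n+1)$, together with a clean surjectivity argument. Once (ii) is in hand, parts (i), (iii), and (iv) reduce to linear-algebra calculations whose essential content is tracking the $-|\xi|/(n+1)$ shift forced by the passage from $U(n+1)$ to $SU(n+1)$.
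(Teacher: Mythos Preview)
Your proposal is correct and follows essentially the same route as the paper: compute $\Phi_{*}\xi$ as a traceless diagonal matrix, use it to verify (i) by direct substitution into $\overline{\moment}_{c}$, identify $W$ with $\mathbb{S}_{n+1}$ via the monomial matrices $E_{\sigma}$, compute the $W$-action on $\mathbb{R}^{n}$ by conjugating $\Phi_{*}\xi$ and reading off coordinates, and then feed (i) and (iii) into Lemma~\ref{nkekndkneknk} to obtain (iv). Your collapse of the sum in (iv) via the relabelling $\sum_{k=1}^{n}\xi_{\sigma^{-1}(k)}=|\xi|-\xi_{\sigma^{-1}(n+1)}$ is exactly the computation the paper carries out implicitly.

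One small correction in part (ii): the products $E_{\sigma}E_{\tau}$ and $E_{\sigma\tau}$ do \emph{not} in general agree up to an element of $\mathbb{Z}_{n+1}$; they agree only up to an element of the diagonal torus $S\subset SU(n+1)$ (for instance, $E_{(13)}E_{(23)}=\operatorname{diag}(-1,1,-1)\,E_{(132)}$ when $n=2$, and $\operatorname{diag}(-1,1,-1)\notin\mathbb{Z}_{3}$). This weaker statement is precisely what is needed, since $S$ maps onto $T$ under $SU(n+1)\to PU(n+1)$ and you are working modulo $T$ in $W=N(T)/T$. The paper handles this by first establishing the Weyl group isomorphism at the level of $SU(n+1)$ (Lemma~\ref{nekwneknfkk}) and then transporting it to $PU(n+1)$ via the general fact that a surjection of compact connected Lie groups with kernel contained in the torus induces an isomorphism of Weyl groups (Lemma~\ref{nekwnkenfknk}); your direct argument is equivalent once the $\mathbb{Z}_{n+1}$ versus $S$ slip is fixed.
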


	We will show the proposition with a series of lemmas. 
	
\begin{lemma}\label{neknknefknkdn}
	Let $S:\mathbb{R}^{n}\to \mathbb{R}$, $(\xi_{1},...,\xi_{n})\mapsto \xi_{1}+...+\xi_{n}$. 
	Given $[t]=[t_{1},...,t_{n}]\in \mathbb{T}^{n}$ and $\xi=(\xi_{1},...,\xi_{n})\in \mathbb{R}^{n}=\textup{Lie}(\mathbb{T}^{n})$, 
	we have:
	\begin{eqnarray*}
		\Phi([t])&=&\mathbb{Z}_{n+1}\,
		\textup{diag}\bigg[e^{2i\pi \big(t_{1}-\tfrac{S(t)}{n+1}\big)},...,
		e^{2i\pi \big(t_{n}-\tfrac{S(t)}{n+1}\big)}, e^{-2i\pi \tfrac{S(t)}{n+1}}\bigg]\in PU(n+1),\\
		\Phi_{*}\xi&=& 2i\pi\, \textup{diag}\Big[\xi_{1}-\tfrac{S(\xi)}{n+1},...,
		\xi_{n}-\tfrac{S(\xi)}{n+1},-\tfrac{S(\xi)}{n+1}\Big]\in \mathfrak{su}(n+1),
	\end{eqnarray*}
	where the notation $\textup{diag}(\lambda_{1},...,\lambda_{n+1})$ denotes the 
	matrix of size $n+1$ whose $(i,j)$-entry is $\lambda_{i}\delta_{ij}$ ($\delta_{ij}=$ Kronecker Delta). 
\end{lemma}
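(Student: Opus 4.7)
The plan is a direct computation based on lifting the torus action to a genuine matrix action and then normalizing it so that the result lies in $SU(n+1)$.

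First I would observe that the torus action $\Phi([t],[z])=[e^{2i\pi t_1}z_1,\ldots,e^{2i\pi t_n}z_n,z_{n+1}]$ is induced by the unitary matrix $D(t):=\textup{diag}(e^{2i\pi t_1},\ldots,e^{2i\pi t_n},1)\in U(n+1)$ acting on $\mathbb{C}^{n+1}$. Since $\Phi([t])\in PU(n+1)$ corresponds to the class in $SU(n+1)/\mathbb{Z}_{n+1}$ of any matrix in $SU(n+1)$ projectively equivalent to $D(t)$, I need to multiply $D(t)$ by a scalar $c\in\mathbb{C}^{*}$ such that $cD(t)\in SU(n+1)$. Since $\det D(t)=e^{2i\pi S(t)}$, this forces $c^{n+1}=e^{-2i\pi S(t)}$, and taking the principal root $c=e^{-2i\pi S(t)/(n+1)}$ gives the matrix stated in the lemma. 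The choice of $(n+1)$-th root is only well-defined up to $\mathbb{Z}_{n+1}$, which is exactly the ambiguity quotiented out in $PU(n+1)$, so the formula for $\Phi([t])$ holds.

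For the second formula, I would simply differentiate at $t=0$. Writing $\gamma(s):=[s\xi]\in \mathbb{T}^{n}$, I would substitute $s\xi$ for $t$ into the formula just obtained:
\begin{eqnarray*}
\Phi(\gamma(s))=\mathbb{Z}_{n+1}\,\textup{diag}\Bigl[e^{2i\pi s(\xi_{1}-S(\xi)/(n+1))},\ldots,e^{2i\pi s(\xi_{n}-S(\xi)/(n+1))},e^{-2i\pi s\,S(\xi)/(n+1)}\Bigr],
\end{eqnarray*}
and then compute $\Phi_{*}\xi=\tfrac{d}{ds}\big\vert_{0}\Phi(\gamma(s))$ in $\mathfrak{su}(n+1)$. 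Since the projection $SU(n+1)\to PU(n+1)$ is a local diffeomorphism near the identity (the kernel $\mathbb{Z}_{n+1}$ being discrete), the derivative is computed entry-wise, giving the stated diagonal matrix with the factor $2i\pi$. A final sanity check confirms that the trace is
\begin{eqnarray*}
2i\pi\bigl(S(\xi)-(n+1)\tfrac{S(\xi)}{n+1}\bigr)=0,
\end{eqnarray*}
so $\Phi_{*}\xi$ indeed lies in $\mathfrak{su}(n+1)$.

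There is no genuine obstacle here: the only mildly delicate point is the bookkeeping of the normalization factor to ensure that the lift to $SU(n+1)$ is well-defined modulo $\mathbb{Z}_{n+1}$, and that the two expressions—matrix form and infinitesimal form—are consistent with the identification $\textup{Lie}(PU(n+1))=\mathfrak{su}(n+1)$ made just before the lemma.
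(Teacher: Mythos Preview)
Your proposal is correct and is precisely the direct calculation the paper has in mind: the paper's own proof is the single line ``By a direct calculation,'' and you have carried out exactly that computation (lift $D(t)$ to $SU(n+1)$ by the scalar $e^{-2i\pi S(t)/(n+1)}$, then differentiate at $t=0$). There is nothing to add.
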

\begin{proof}
	By a direct calculation.
\end{proof}
	By inspection of Lemma \ref{neknknefknkdn}, 
	we see that 
	\begin{eqnarray*}
		T=\Phi(\mathbb{T}^{n})
		=\big\{\mathbb{Z}_{n}\,\textup{diag}[\lambda_{1},...,\lambda_{n+1}]\,\,\vert\,\,\lambda_{j}\in \mathbb{C}\,\,\textup{and}\,\,
		|\lambda_{j}|=1\,\,\forall\,j,
		\lambda_{1}...\lambda_{n+1}=1\big\}. 
	\end{eqnarray*}

	Note that $T$ is the image under the projection $SU(n+1)\to PU(n+1)$ of the group $S\subseteq SU(n+1)$ of diagonal matrices. 
	It is well-known that $S$ is a maximal torus in $SU(n+1)$. 
\begin{lemma}\label{nekwneknfkk}
	The following hold.
	\begin{enumerate}[(i)]
		\item A complex matrix $A=(A_{ij})$ of size $n+1$ is in the normalizer $N(S)$ of $S$ if and only if there are complex numbers 
			$\lambda_{1},...,\lambda_{n+1}$ and a permutation $\sigma\in \mathbb{S}_{n+1}$ such that:
			\begin{enumerate}[(a)]
				\item $|\lambda_{1}|=...=|\lambda_{n+1}|=1$ and $\lambda_{1}...\lambda_{n+1}=\textup{sign}(\sigma)$. 
				\item $A_{ab}=\lambda_{b}\delta_{a\sigma(b)}$ for all $a,b\in \{1,...,n+1\}$. 
			\end{enumerate}
		\item Given $\sigma,\tau\in \mathbb{S}_{n+1}$, there is $A\in S$ such that $E_{\sigma}E_{\tau}=AE_{\sigma \circ \tau}$. 
		\item $N(S)=\sqcup_{\sigma\in \mathbb{S}_{n+1}}SE_{\sigma}$ (disjoint union of cosets). 
		\item The map $\mathbb{S}_{n+1}\to W(S),$ $\sigma\mapsto SE_{\sigma}$ is a 
			group isomorphism, where $W(S)$ is the Weyl group of $SU(n+1)$ associated to $S$. 
	\end{enumerate}
\end{lemma}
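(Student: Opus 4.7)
The plan is to establish (i) first as the key structural result, and then deduce (ii), (iii) and (iv) from it with routine matrix manipulations.

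For (i), the starting point is the classical observation that $ASA^{-1}=S$ forces $A$ to be a ``monomial'' matrix. Concretely, I would choose $D\in S$ with pairwise distinct diagonal entries (e.g. roots of unity arranged generically); then $ADA^{-1}$ being diagonal forces $A$ to send each eigenline $\mathbb{C}\cdot e_b$ of $D$ into some eigenline $\mathbb{C}\cdot e_{\sigma(b)}$, yielding a permutation $\sigma\in\mathbb{S}_{n+1}$ and scalars $\lambda_b$ with $A_{ab}=\lambda_b\delta_{a\sigma(b)}$. Unitarity (which holds since $SU(n+1)\subset U(n+1)$) forces $|\lambda_b|=1$, and expanding the determinant along the unique nonvanishing pattern gives $\det A=\textup{sign}(\sigma)\prod_b\lambda_b$; the condition $\det A=1$ then yields $\prod_b\lambda_b=\textup{sign}(\sigma)$. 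The converse direction is immediate: any matrix of the stated shape visibly normalizes $S$ and has determinant one.

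For (ii) I would write $E_\rho=D_\rho P_\rho$ for each $\rho\in\mathbb{S}_{n+1}$, where $P_\rho$ is the ordinary permutation matrix with $(P_\rho)_{ab}=\delta_{a,\rho(b)}$ and $D_\rho=\textup{diag}(1,\ldots,1,\textup{sign}(\rho))$. A short computation using $P_\sigma D_\tau P_\sigma^{-1}\in S$ (it is diagonal with entries of $D_\tau$ permuted by $\sigma$) together with $P_\sigma P_\tau=P_{\sigma\tau}$ gives $E_\sigma E_\tau=A\cdot E_{\sigma\tau}$ with $A:=D_\sigma\cdot(P_\sigma D_\tau P_\sigma^{-1})\cdot D_{\sigma\tau}^{-1}$ diagonal. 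To conclude $A\in S$ it suffices to check $\det A=1$, which follows from $\det E_\rho=1$ (apply (i) with $\lambda_b=1$ for $b<n+1$ and $\lambda_{n+1}=\textup{sign}(\rho)$, or expand the determinant directly).

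Statements (iii) and (iv) then drop out formally. For (iii), given any $B\in N(S)$ with associated permutation $\sigma$ (from (i)), the same matrix manipulation as in (ii) shows $B\cdot E_\sigma^{-1}$ is diagonal with determinant one, hence in $S$, giving $B\in SE_\sigma$. The cosets $SE_\sigma$ are disjoint because the permutation attached by (i) to an element of $N(S)$ is intrinsic: if $DE_\sigma=D'E_\tau$, then $(D')^{-1}D=E_\tau E_\sigma^{-1}$, whose left side corresponds to the identity permutation and whose right side corresponds to $\tau\sigma^{-1}$, forcing $\tau=\sigma$. Finally, (iv) combines these: the map $\sigma\mapsto SE_\sigma$ is a homomorphism by (ii), surjective by (iii), and injective by the same disjointness argument. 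The only delicate point — and the only place the argument could stumble — is the bookkeeping of the $\textup{sign}(\rho)$ factors in the last row of the $E_\rho$'s when verifying $A\in S$ rather than just in the diagonal subgroup of $U(n+1)$; but this reduces to the identity $\textup{sign}(\sigma)\textup{sign}(\tau)=\textup{sign}(\sigma\tau)$ and causes no real difficulty.
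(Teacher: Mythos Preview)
Your proposal is correct and carries out in full the ``direct calculation'' the paper only alludes to; the structure (monomial form from conjugating a generic diagonal, then determinant bookkeeping for the sign, then coset decomposition) is exactly the standard argument one expects here. One tiny remark: in (iii) you can bypass the explicit $D_\rho P_\rho$ manipulation by noting from (i) that $B$ and $E_\sigma$ share the same nonzero pattern, so $BE_\sigma^{-1}$ is automatically diagonal with determinant $\det B/\det E_\sigma=1$.
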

\begin{proof}
	By a direct calculation.
\end{proof}
	The next lemma is proven in Chapter IV of \cite{tammo}, Theorem 2.9. 
\begin{lemma}\label{nekwnkenfknk}
	Let $f:G\to H$ be a surjective homomorphism of compact connected Lie groups, and let 
	$T\subset G$ be a maximal torus. Then $f(T)$ is a maximal torus in $H$. 
	Furthermore, if $\textup{ker}(f)\subset T$, then $f$ induces an isomorphism of the Weyl groups. 
\end{lemma}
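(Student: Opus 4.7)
The plan is to prove the two claims in sequence. For the first, $f(T)$ is a continuous homomorphic image of a compact connected abelian group, hence a torus in $H$; only its maximality needs work. I would invoke Cartan's conjugation theorem: since $G$ is compact and connected, every element of $G$ is conjugate to an element of $T$, and surjectivity of $f$ then shows every element of $H$ is conjugate to an element of $f(T)$. Now let $S \subset H$ be any maximal torus and pick a topological generator $h_{0}$ of $S$. Then $h_{0}$ is conjugate in $H$ to some $t_{0} \in f(T)$, so $S = \overline{\langle h_{0}\rangle}$ is conjugate to $\overline{\langle t_{0}\rangle} \subseteq f(T)$. This yields $\dim S \leq \dim f(T)$, and since $f(T)$ lies in some maximal torus the reverse inequality is automatic; hence $f(T)$ is itself maximal.

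For the Weyl group statement, I would define $\bar{f} \colon W_{G}(T) \to W_{H}(f(T))$ by sending the class of $g \in N_{G}(T)$ to the class of $f(g)$. Well-definedness is immediate from $f(g)\,f(T)\,f(g)^{-1} = f(gTg^{-1}) = f(T)$, together with $f(T) \subseteq f(T)$ to see that the quotient kills $T$. For injectivity, suppose $f(g) \in f(T)$; writing $f(g) = f(t)$ with $t \in T$ gives $gt^{-1} \in \ker(f) \subseteq T$, so $g \in T$. Surjectivity is the substantive step: given $h \in N_{H}(f(T))$, write $h = f(g)$; then $gTg^{-1}$ is a maximal torus of $G$ whose image under $f$ is again $f(T)$. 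Because $\ker(f)$ is normal in $G$, the hypothesis $\ker(f) \subseteq T$ propagates to $\ker(f) \subseteq gTg^{-1}$, so both $T$ and $gTg^{-1}$ lie in the closed subgroup $K := f^{-1}(f(T))^{0}$. Applying Cartan's conjugation theorem inside $K$ produces $k \in K$ with $kTk^{-1} = gTg^{-1}$, whence $n := g^{-1}k \in N_{G}(T)$ and $f(g) = f(k)\,f(n)^{-1}$ equals $f(n^{-1})$ modulo $f(T)$; thus $\bar{f}([n^{-1}]) = [h]$.

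The main obstacle is ensuring that $T$ (and likewise $gTg^{-1}$) actually \emph{is} a maximal torus of the connected subgroup $K = f^{-1}(f(T))^{0}$, which is what legitimizes applying Cartan's theorem inside $K$ during the surjectivity argument. This reduces to the observation that any torus of $K$ containing $T$ is \emph{a fortiori} a torus of $G$ containing $T$, hence equals $T$ by maximality in $G$; the same reasoning applies to $gTg^{-1}$. Once this structural point is secured, the well-definedness, injectivity, and surjectivity of $\bar{f}$ fit together and yield the claimed isomorphism of Weyl groups.
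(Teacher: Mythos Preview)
Your proof is correct. The paper does not actually prove this lemma; it simply cites Chapter~IV, Theorem~2.9 of tom Dieck's book, so you have supplied what the paper outsources. Your argument is the standard one: the maximality of $f(T)$ via Cartan's conjugation theorem and a topological-generator argument, and the Weyl-group isomorphism via the map $[g]\mapsto[f(g)]$, with surjectivity handled by passing to the compact connected subgroup $K=f^{-1}(f(T))^{0}$ and conjugating the two maximal tori $T$ and $gTg^{-1}$ inside it. The one point worth making explicit (you do address it) is that $T$ and $gTg^{-1}$ both lie in the \emph{identity component} $K$, which follows since each is connected and contains the identity; once that is noted, Cartan's theorem applies inside $K$ and the argument goes through.
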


\begin{proof}[Proof of Proposition \ref{neknknekfnknn}]
	(i) Follows from a direct computation using Lemma \ref{neknknefknkdn}. (ii) Follows from
	Lemmas \ref{nekwneknfkk} and \ref{nekwnkenfknk}. Let us show (iii). 
	Let $\sigma\in \mathbb{S}_{n+1}$ and $\xi\in \mathbb{R}^{n}$. By definition, 
	\begin{eqnarray}\label{njjwnknvrnefwdkfekj}
		\Phi_{*}(\sigma\cdot \xi)=\textup{Ad}_{\mathbb{Z}_{n+1}E_{\sigma}}\Phi_{*}\xi.
	\end{eqnarray}
	Under the natural identification $\textup{Lie}(PU(n+1))=\mathfrak{su}(n+1)$, it is easy to see that 
	$\textup{Ad}_{\mathbb{Z}_{n+1}A}\eta=\textup{Ad}_{A}\eta$ for all $\eta\in \mathfrak{su}(n+1)$ and all $A\in SU(n+1)$. Thus 
	\eqref{njjwnknvrnefwdkfekj} can be rewritten as 
	\begin{eqnarray}\label{nekwnkenfkwndk}
		\Phi_{*}(\sigma\cdot \xi)=\textup{Ad}_{E_{\sigma}}\Phi_{*}\xi.
	\end{eqnarray}
	Let $a,b\in \{1,...,n+1\}$ be arbitrary. In view of Lemma \ref{neknknefknkdn}, we have 
	\begin{eqnarray*}
		\Big(\textup{Ad}_{E_{\sigma}}\Phi_{*}\xi\Big)_{ab}&=& \Big(E_{\sigma} (\Phi_{*}\xi) (E_{\sigma})^{T}\Big)_{ab}\\
		&=& \sum_{k,l} \big(E_{\sigma}\big)_{ak} (\Phi_{*}\xi)_{kl} (E_{\sigma})_{bl}\\
		&=& 2i\pi\,\sum_{k,l} \textup{sign}(\sigma)^{\delta_{a(n+1)}}\delta_{a\sigma(k)}
		\delta_{kl}\big(\xi_{k}-\tfrac{S(\xi)}{n+1}\big) \textup{sign}(\sigma)^{\delta_{b(n+1)}} \delta_{b\sigma(l)}\\
		&=& 2i\pi\,\sum_{k} \underbrace{\textup{sign}(\sigma)^{\delta_{a(n+1)}}\delta_{a\sigma(k)}
			\big(\xi_{k}-\tfrac{S(\xi)}{n+1}\big) \textup{sign}(\sigma)^{\delta_{b(n+1)}} \delta_{b\sigma(k)}}_{=0\,\,\textup{if}\,\,
			a\neq b}\\
		&=& 2i\pi\,\sum_{k} \delta_{ab} \delta_{a\sigma(k)}
			\big(\xi_{k}-\tfrac{S(\xi)}{n+1}\big) \big(\textup{sign}(\sigma)^{\delta_{a(n+1)}}\big)^{2} \delta_{a\sigma(k)}\\
		&=& 2i\pi\,\sum_{k} \delta_{ab} \delta_{a\sigma(k)}
			\big(\xi_{k}-\tfrac{S(\xi)}{n+1}\big)\delta_{a\sigma(k)}\\
		&=& 2i\pi\,\delta_{ab} \big(\xi_{\sigma^{-1}(a)}-\tfrac{S(\xi)}{n+1}\big).
	\end{eqnarray*}
	Thus 
	\begin{eqnarray}\label{nknknekknkndk}
		\textup{Ad}_{E_{\sigma}}\Phi_{*}\xi=2i\pi \textup{diag}\Big[\xi_{\sigma^{-1}(1)}-\tfrac{S(\xi)}{n+1},...,
		\xi_{\sigma^{-1}(n+1)}-\tfrac{S(\xi)}{n+1}\Big].
	\end{eqnarray}
	Let $\xi'=(\xi'_{1},...,\xi'_{n})\in \mathbb{R}^{n}$ be defined by $\xi'_{k}=\xi_{\sigma^{-1}(k)}-\xi_{\sigma^{-1}(n+1)}$, where 
	$k\in \{1,...,n\}$. We claim that $\textup{Ad}_{E_{\sigma}}\Phi_{*}\xi=\Phi_{*}\xi'$. Indeed, 
	\begin{eqnarray*}
		S(\xi') &=&\xi'_{1}+...+\xi'_{n}\\
		&=& (\xi_{\sigma^{-1}(1)}+...+\xi_{\sigma^{-1}(n)}) -n\xi_{\sigma^{-1}(n+1)}\\
		&=& S(\xi) -\xi_{\sigma^{-1}(n+1)} -n\xi_{\sigma^{-1}(n+1)}\\
		&=& S(\xi) -(n+1)\xi_{\sigma^{-1}(n+1)}
	\end{eqnarray*}
	and so $-\tfrac{S(\xi')}{n+1}=\xi_{\sigma^{-1}(n+1)}-\tfrac{S(\xi)}{n+1}.$ 
	Thus we can rewrite \eqref{nknknekknkndk} as 
	\begin{eqnarray}\label{nfekwknekfnejdnwknekwnkkd}
		&&\textup{Ad}_{E_{\sigma}}\Phi_{*}\xi= 2i\pi\,\textup{diag}\Big[
			\xi'_{1}-\tfrac{S(\xi')}{n+1},...,
		\xi'_{n}-\tfrac{S(\xi')}{n+1},-\tfrac{S(\xi')}{n+1}\Big]=\Phi_{*}\xi',
	\end{eqnarray}
	which shows the claim. It follows from the claim and \eqref{nekwnkenfkwndk} that $\sigma\cdot \xi=\xi'$. 
	This completes the proof of (iii). (iv) Follows from a direct computation using (i), (iii) and Lemma \ref{nkekndkneknk}. 
\end{proof}

\section{K\"{a}hler functions on toric manifolds}\label{nenefkwnknefkwndk}

	In this section, we investigate the relationships between the space of K\"{a}hler functions 
	on a K\"{a}hler toric manifold and the momentum map associated to the torus action. 
	We start by recalling the following definition.

\begin{definition}\label{fdkjfekgjrkgj}
	Let $N$ be a K\"{a}hler manifold with K\"{a}hler metric $g$ and K\"{a}hler form $\omega$. 
	A smooth function $f\,:\,N\rightarrow \mathbb{R}$ is said to be 
	\textit{K\"{a}hler} if it satisfies $\mathscr{L}_{X_{f}}g=0$,
	where $X_{f}$ is the Hamiltonian vector field associated to $f$ 
	(i.e. $\omega(X_{f},\,.\,)=df(.)$) and where $\mathscr{L}_{X_{f}}$ is the Lie derivative 
	in the direction of $X_{f}.$
\end{definition}

	Following \cite{Cirelli-Quantum}, we will denote by 
	$\mathscr{K}(N)$ the space of K\"{a}hler functions 
	on $N$ endowed with the Poisson bracket $\{f,h\}:=\omega(X_{f},X_{h})$.
	When $N$ has a finite number of connected components, $\mathscr{K}(N)$
	is finite dimensional\footnote{This is a consequence of the following result: 
	if $(M,h)$ is a connected Riemannian manifold, 
	then its space of Killing vector fields $\mathfrak{X}_{\textup{Kill}}(M):=
	\{X\in \mathfrak{X}(M)\,\big\vert\,\mathscr{L}_{X}h=0\}$ 
	is finite dimensional (see for example \cite{Jost}). }.

	Let $N$ be a K\"{a}hler manifold with K\"{a}hler metric $g$. We will denote by $\textup{Aut}(N,g)$ 
	the group of holomorphic isometries of $N$ endowed with the compact-open topology. 
	When $N$ is connected, $\textup{Aut}(N,g)$ is a Lie group whose natural action on $N$ is smooth 
	(see Proposition \ref{nfekwnknefknk}). In this case, we shall denote by $\mathfrak{aut}(N,g)$ the Lie algebra 
	of $\textup{Aut}(N,g)$ and by $[\,,\,]_{\mathfrak{aut}}$ the corresponding Lie bracket. 
	If in addition $N$ is compact, then $\textup{Aut}(N,g)$ is also compact,
	since it is a closed subgroup of the group of isometries of $N$, which is compact (see Theorem \ref{dkfjkejfkegjkr}). 

	Let $\mathfrak{X}_{\textup{hol}}(N,g)$ be the space of complete Killing vector fields $X$ on $N$ whose flows 
	consist of biholomorphisms, endowed with minus the usual Lie bracket for vector fields, that we denote by $[,]_{-}$. 
	When $N$ is connected, the map 
	\begin{eqnarray}\label{unfekefknkfknk}
		(\mathfrak{aut}(N,g),[,]_{\mathfrak{aut}})\mapsto (\mathfrak{X}_{\textup{hol}}(N,g),[,]_{-}), \,\,\,\,\,\,\xi\mapsto 
		\xi_{N},
	\end{eqnarray}
	is well-defined and is a Lie algebra isomorphism, where $\xi_{N}$ denotes the fundamental vector field associated to $\xi$ 
	(see Propositions \ref{necknknknfsknfk} and \ref{nfekwnknefknk}).

\begin{lemma}\label{ndkndkneksdnknk}
	Let $N$ be a connected, compact K\"{a}hler manifold with K\"{a}hler metric $g.$ Suppose that 
	the first de Rham cohomology group $H^{1}_{dR}(N,\mathbb{R})$ is trivial 
	(for example, if $N$ is simply connected). Then the natural action of $\textup{Aut}(N,g)$ on $N$ 
	is Hamiltonian. Furthermore, if $\overline{\moment}:N\to \mathfrak{aut}(N,g)^{*}$ is an equivariant momentum map, 
	then the map
	\begin{eqnarray}\label{nenkrkfenknekk}
		\begin{tabular}{cll}
			$\mathfrak{aut}(N,g)\oplus \mathbb{R}$   &$\to$& $\mathscr{K}(N)$, \\[0.5em]
			$(\xi,r)$                         &$\mapsto$& $\overline{\moment}^{\xi}+r$,
		\end{tabular}
	\end{eqnarray}
	is a Lie algebra isomorphism, where $\mathfrak{aut}(N,g)\oplus \mathbb{R}$ is endowed 
	with the Lie bracket $[(\xi,r),(\eta,s)]=([\xi,\eta]_{\mathfrak{aut}},0)$. 
\end{lemma}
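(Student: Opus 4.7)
My proof plan proceeds in the natural order: first establish that the action is Hamiltonian, then show the stated map is well-defined, linear, a Lie algebra homomorphism, and finally bijective.

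\textbf{Step 1 (Hamiltonian action).} The first assertion is immediate from the tools already at hand. Since $N$ is compact and $g$ is Kähler, $\textup{Aut}(N,g)$ is a closed subgroup of the full isometry group of $N$, which is compact by Myers–Steenrod (Theorem \ref{dkfjkejfkegjkr}); hence $\textup{Aut}(N,g)$ is compact. Combined with the hypothesis $H^{1}_{dR}(N,\mathbb{R})=0$, fact \textbf{(F1)} from Section \ref{nfeknkenfkndkn} shows that the natural action of $\textup{Aut}(N,g)$ on $N$ is Hamiltonian, so an equivariant momentum map $\overline{\moment}:N\to\mathfrak{aut}(N,g)^{*}$ exists.

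\textbf{Step 2 (Well-defined and Lie algebra homomorphism).} For $(\xi,r)\in\mathfrak{aut}(N,g)\oplus\mathbb{R}$, the Hamiltonian vector field of $\overline{\moment}^{\xi}+r$ is the fundamental vector field $\xi_{N}$, which, through the isomorphism \eqref{unfekefknkfknk}, lies in $\mathfrak{X}_{\textup{hol}}(N,g)$ and is in particular Killing. Thus $\overline{\moment}^{\xi}+r\in\mathscr{K}(N)$ and the map is well-defined; linearity is obvious. The Lie bracket assertion reduces to the standard identity
\begin{eqnarray*}
	\{\overline{\moment}^{\xi},\overline{\moment}^{\eta}\}=\overline{\moment}^{[\xi,\eta]_{\mathfrak{aut}}},
\end{eqnarray*}
which I would obtain from equivariance by differentiating $\overline{\moment}^{\eta}\circ\Phi_{g}=\langle\overline{\moment},\textup{Ad}_{g^{-1}}\eta\rangle$ at $g=e$ in the direction $\xi$ and comparing with $\omega(\xi_{N},\eta_{N})$. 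The constants contribute nothing to the bracket, matching the Lie bracket $[(\xi,r),(\eta,s)]=([\xi,\eta]_{\mathfrak{aut}},0)$ on the source.

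\textbf{Step 3 (Injectivity).} If $\overline{\moment}^{\xi}+r\equiv 0$, then $d\overline{\moment}^{\xi}=0$ gives $\omega(\xi_{N},\cdot)=0$, hence $\xi_{N}=0$ by nondegeneracy of $\omega$. The Lie algebra isomorphism \eqref{unfekefknkfknk} then forces $\xi=0$, and consequently $r=0$.

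\textbf{Step 4 (Surjectivity).} This is where the real work lies, and I expect it to be the main obstacle. Given $f\in\mathscr{K}(N)$, I must produce $\xi\in\mathfrak{aut}(N,g)$ with $X_{f}=\xi_{N}$. The vector field $X_{f}$ preserves $\omega$ (since it is Hamiltonian) and preserves $g$ (by the Kähler condition $\mathscr{L}_{X_{f}}g=0$). Because the complex structure $J$ is recovered from $g$ and $\omega$ via $g(J\cdot,\cdot)=\omega(\cdot,\cdot)$, the flow of $X_{f}$ preserves $J$ as well; so $X_{f}$ is a holomorphic Killing field, and is complete by compactness of $N$. Hence $X_{f}\in\mathfrak{X}_{\textup{hol}}(N,g)$, and the isomorphism \eqref{unfekefknkfknk} yields a unique $\xi\in\mathfrak{aut}(N,g)$ with $\xi_{N}=X_{f}$. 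Then $d(f-\overline{\moment}^{\xi})=\omega(X_{f}-\xi_{N},\cdot)=0$, and since $N$ is connected this difference is a constant $r$, giving $f=\overline{\moment}^{\xi}+r$. This completes the proof of surjectivity and hence of the lemma.
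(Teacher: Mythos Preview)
Your proof is correct and follows essentially the same approach as the paper's: the Hamiltonian property comes from compactness together with (F1), well-definedness and the Lie algebra homomorphism property follow from infinitesimal equivariance of $\overline{\moment}$, injectivity uses nondegeneracy of $\omega$ together with the isomorphism \eqref{unfekefknkfknk}, and surjectivity is obtained by observing that the Hamiltonian vector field of a K\"{a}hler function preserves $g$, $\omega$, hence $J$, so lies in $\mathfrak{X}_{\textup{hol}}(N,g)$ and is therefore a fundamental vector field. The only cosmetic difference is that the paper invokes effectiveness of the action (Lemma \ref{wkjkjkefjwkjk}) directly for injectivity, whereas you pass through the isomorphism \eqref{unfekefknkfknk}; these are equivalent.
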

\begin{proof}
	The fact that the natural action of $\textup{Aut}(N,g)$ on $N$ is Hamiltonian follows from (F1).
	Let $\overline{\moment}:N\to \mathfrak{aut}(N,g)^{*}$ be an equivariant momentum map and let
	$\phi:\mathfrak{aut}(N,g)\oplus \mathbb{R}\to C^{\infty}(N,\mathbb{R})$ be defined by $\phi(\xi,r)=\overline{\moment}^{\xi}+r$, 
	where $C^{\infty}(N,\mathbb{R})$ is the space of smooth functions on $N$.
	Clearly, $\phi$	is linear and its image satisfies $\textup{Im}(\phi)\subseteq \mathscr{K}(N)$. 
	Let $\{f,h\}=\omega(X_{f},X_{h})$ denote the Poisson bracket on $C^{\infty}(N,\mathbb{R})$ 
	associated to the K\"{a}hler form $\omega.$ To prove the lemma, it suffices to show that:
	\begin{enumerate}[(a)]
	\item $\textup{Im}(\phi)=\mathscr{K}(N).$ 
	\item $\phi$ is injective.
	\item $\phi([(\xi,r),(\xi',r')])=\{\phi(\xi,r),\phi(\xi',r')\}$ for all $\xi,\xi'\in \mathfrak{aut}(N,g)$ and all 
		$r,r'\in \mathbb{R}$. 
	\end{enumerate}
	\noindent (a) It suffices to show that $\mathscr{K}(N)\subseteq \textup{Im}(\phi)$. 
	So let $f\in \mathscr{K}(N)$ be arbitrary and let $X_{f}$ be the corresponding Hamiltonian vector field. 
	Note that $X_{f}$ is complete, since $N$ is compact. Because $X_{f}$ is Hamiltonian and Killing, 
	its flow preserves $g$ and $\omega$. Since $g,\omega$ and the complex structure $J$ are compatible, 
	$X_{f}$ also preserves $J$, which means that 
	the flow of $X_{f}$ consists of holomorphic transformations. Thus $X_{f}\in \mathfrak{X}_{\textup{hol}}(N,g)$. 
	By \eqref{unfekefknkfknk}, $X_{f}$ is the fundamental vector 
	field associated to some $\xi\in \mathfrak{aut}(N,g)$, that is, $X_{f}=\xi_{N}$. 
	It follows that for every $u\in TN$, 
	\begin{eqnarray*}
		df(u)=\omega(X_{f},u)=\omega(\xi_{N},u)=d\overline{\moment}^{\xi}(u),
	\end{eqnarray*}
	and so $df=d\overline{\moment}^{\xi}$ on $N$. Since $N$ is connected, there is $r\in \mathbb{R}$ such 
	that $f=\overline{\moment}^{\xi}+r=\phi(\xi,r)$. This shows that 
	$\mathscr{K}(N)\subseteq \textup{Im}(\phi)$. 

	\noindent (b) Suppose that $\phi(\xi,r)=0$. Thus $\overline{\moment}^{\xi}\equiv -r$ on $N$ and hence 
	$\omega(\xi_{N},u)=d\overline{\moment}^{\xi}(u)=d(-r)(u)=0$ for all $u\in TN$. It follows that $\xi_{N}=0$ on $N$. 
	Since the natural action of $\textup{Aut}(N,g)$ on $N$ is effective, this implies 
	$\xi=0$ (see Lemma \ref{wkjkjkefjwkjk}), which in turn implies that $r\equiv-\overline{\moment}^{0}=0$. 
	Thus $\xi=0$ and $r=0$. 

	\noindent (c) Since $\overline{\moment}$ 
	is $\textup{Aut}(N,g)$-equivariant, it is also infinitesimally equivariant, which means that 	
	$\overline{\moment}^{[\xi,\eta]_{\mathfrak{aut}}}=\{\overline{\moment}^{\xi},\overline{\moment}^{\eta}\}$ 
	for all $\xi,\eta\in \mathfrak{aut}(N,g)$. Thus
	\begin{eqnarray*}
		\phi([(\xi,r),(\xi',r')])&=& \phi([\xi,\xi']_{\mathfrak{aut}},0)=\overline{\moment}^{[\xi,\xi']_{\mathfrak{aut}}}=
		\{\overline{\moment}^{\xi},\overline{\moment}^{\xi'}\}\\
		&=&\{\overline{\moment}^{\xi}+r,\overline{\moment}^{\xi'}+r'\}=\{\phi(\xi,r),\phi(\xi',r')\}. 
	\end{eqnarray*}
	This completes the proof. 
\end{proof}

	We now focus our attention to the case in which $N$ is a K\"{a}hler toric manifold, with torus action 
	$\Phi:\mathbb{T}^{n}\times N\to N$ and momentum map $\moment:N\to \mathbb{R}^{n}$. By Lemma \ref{nefkkwknefknwknk}, 
	$T=\Phi(\mathbb{T}^{n})$ is a maximal torus in $\textup{Aut}(N,g)^{0}$. Let $W=W(T)$ be the Weyl group 
	of $\textup{Aut}(N,g)^{0}$ associated to $T$, and let $\sigma_{\moment}:W\to (\mathbb{R}^{n})^{*}$ 
	be the one-cocycle associated to $\moment$. 

	In view of \eqref{neknwkenkfnknk}, one can easily check that the formula
        \begin{eqnarray}\label{nceknfkfneknk}
		w\cdot (\xi,r)=(w\cdot \xi, r+\sigma_{\moment}(w)(\xi)) \,\,\,\,\,\,\,\,(w\in W,\xi\in\mathbb{R}^{n},r\in \mathbb{R})
	\end{eqnarray}
	defines an action of $W$ on $\mathbb{R}^{n}\oplus \mathbb{R}$. 
	(Recall that the Lie algebra $\mathfrak{t}$ of $T$ is identified with $\mathbb{R}^{n}$ via $\Phi_{*}$).

\begin{proposition}[\textbf{Diagonalization of K\"{a}hler functions}]\label{nenekfnsddnekn}
	Let $\Phi:\mathbb{T}^{n}\times N\to N$ be a K\"{a}hler toric manifold, with 
	K\"{a}hler metric $g$ and momentum map $\moment:N\to \mathbb{R}^{n}$. 
	Let $W=W(T)$ be the Weyl group of $\textup{Aut}(N,g)^{0}$ associated to the maximal torus $T=\Phi(\mathbb{T}^{n})$. 
	Given a smooth function $f:N\to \mathbb{R}$, the following hold.
	\begin{enumerate}[(a)]
	\item $f$ is K\"{a}hler if and only if there exist $\xi\in \mathbb{R}^{n}$, $\varphi\in \textup{Aut}(N,g)^{0}$ and 
		$r\in \mathbb{R}$ such that $f=\moment^{\xi}\circ \varphi+r$.
	\item If $f=\moment^{\xi}\circ \varphi+r=\moment^{\xi'}\circ \varphi'+r'$ on $N$, where $\xi,\xi'\in 
		\mathbb{R}^{n}$, $\varphi,\varphi'\in \textup{Aut}(N,g)^{0}$ and $r,r'\in \mathbb{R}$, 
		then there is $w\in W$ such that $(\xi,r)=w\cdot (\xi',r')$, where the action of $W$ on $\mathbb{R}^{n}\oplus \mathbb{R}$ 
		is given by \eqref{nceknfkfneknk}. 
	\end{enumerate}
\end{proposition}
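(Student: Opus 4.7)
The plan is to reduce the problem to the Hamiltonian action of the full holomorphic isometry group by means of Lemma \ref{ndkndkneksdnknk}, and then invoke the infinitesimal conjugation theorem (P1) and its uniqueness counterpart (P2) to get into the maximal torus $T = \Phi(\mathbb{T}^n)$, from which $\moment^\xi$ can be recovered via the relation $\moment = \Phi^* \circ \overline{\moment} + C$.

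For part (a), I would first note that $N$ is compact, connected and (by (F3)) simply connected, so Lemma \ref{ndkndkneksdnknk} applies to the natural action of $\textup{Aut}(N,g)^0$: a function $f$ is Kähler if and only if $f = \overline{\moment}^\eta + s$ for some $\eta \in \mathfrak{aut}(N,g)$ and $s \in \mathbb{R}$, where $\overline{\moment}$ is an equivariant momentum map. Given such an $\eta$, I would then apply (P1) to write $\eta = \textup{Ad}_\phi \Phi_* \xi$ for some $\phi \in \textup{Aut}(N,g)^0$ and some $\xi \in \mathbb{R}^n$ (identified with $\mathfrak{t}$ via $\Phi_*$). The equivariance identity $\overline{\moment}^{\textup{Ad}_\phi \Phi_*\xi} = \overline{\moment}^{\Phi_*\xi} \circ \phi^{-1}$ combined with the normalization $\moment^\xi = \overline{\moment}^{\Phi_*\xi} + \langle C, \xi \rangle$ from \eqref{neknknekwnknk} gives $f = \moment^\xi \circ \varphi + r$ with $\varphi := \phi^{-1}$ and $r := s - \langle C, \xi \rangle$. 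The converse direction is immediate: $\moment^\xi \circ \varphi + r = \overline{\moment}^{\textup{Ad}_{\varphi^{-1}} \Phi_* \xi} + (r + \langle C, \xi \rangle)$ lies in the image of the isomorphism of Lemma \ref{ndkndkneksdnknk} and hence is Kähler.

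For part (b), I would rewrite both expressions $\moment^\xi \circ \varphi + r$ and $\moment^{\xi'} \circ \varphi' + r'$ in terms of $\overline{\moment}$ as above, yielding
\begin{eqnarray*}
\overline{\moment}^{\textup{Ad}_{\varphi^{-1}}\Phi_*\xi} + (r + \langle C, \xi\rangle) = \overline{\moment}^{\textup{Ad}_{(\varphi')^{-1}}\Phi_*\xi'} + (r' + \langle C, \xi'\rangle).
\end{eqnarray*}
The injectivity part of Lemma \ref{ndkndkneksdnknk} then forces the equality of Lie-algebra elements $\textup{Ad}_{\varphi^{-1}} \Phi_*\xi = \textup{Ad}_{(\varphi')^{-1}} \Phi_*\xi'$ as well as the equality of the scalar parts $r + \langle C, \xi \rangle = r' + \langle C, \xi' \rangle$. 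Applying (P2) to the first equality yields some $w \in W$ with $\Phi_*\xi = w \cdot \Phi_*\xi'$, i.e.\ $\xi = w \cdot \xi'$ under the induced action on $\mathbb{R}^n$.

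The main obstacle — and really the only nontrivial bookkeeping — is then matching the scalar equation against the definition \eqref{nceknfkfneknk} of the $W$-action on $\mathbb{R}^n \oplus \mathbb{R}$. Here I would invoke Lemma \ref{nkekndkneknk}, which gives $\sigma_\moment(w)(\xi') = \langle C, \xi' - w\cdot \xi' \rangle = \langle C, \xi' - \xi \rangle$. Substituting into the scalar equation gives $r - r' = \langle C, \xi' - \xi \rangle = \sigma_\moment(w)(\xi')$, so $r = r' + \sigma_\moment(w)(\xi')$, and hence $(\xi,r) = (w\cdot\xi',\, r' + \sigma_\moment(w)(\xi')) = w \cdot (\xi', r')$ as required.
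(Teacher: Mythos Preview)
Your proposal is correct and follows essentially the same approach as the paper: both reduce to the isomorphism of Lemma \ref{ndkndkneksdnknk}, apply (P1) to conjugate into $\mathfrak{t}\cong\mathbb{R}^n$ via the relation $\moment^\xi=\overline{\moment}^{\Phi_*\xi}+\langle C,\xi\rangle$, and for part (b) use (P2) together with the cocycle identity $\sigma_{\moment}(w)(\xi')=\langle C,\xi'-w\cdot\xi'\rangle$ of Lemma \ref{nkekndkneknk} to match the scalar components. The only cosmetic difference is that you state the converse in (a) explicitly and cite Lemma \ref{nkekndkneknk} by name, whereas the paper does the same computation inline.
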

\begin{proof}
	By Lemma \ref{ndkndkneksdnknk}, the natural action of $\textup{Aut}(N,g)$ on $N$ is Hamiltonian; let 
	$\overline{\moment}:N\to \mathfrak{aut}(N,g)^{*}$ be an equivariant momentum map. Since $\moment$ and 
	$\Phi^{*}\circ \overline{\moment}$ are both momentum maps for the same torus action, there is $C\in \mathbb{R}^{n}$ 
	such that $\moment=\Phi^{*}\circ \overline{\moment}+C$, or equivalently, such that $\moment^{\xi}=\overline{\moment}^{\Phi_{*}\xi}+
	\langle C,\xi\rangle$ for all $\xi\in \mathbb{R}^{n}$, where $\langle\,,\,\rangle$ is the Euclidean product on $\mathbb{R}^{n}$.

	Now let $f:N\to \mathbb{R}$ be a function of the form $f=\overline{\moment}^{\xi}+r$, where 
	$\xi\in \mathfrak{aut}(N,g)$ and $r\in \mathbb{R}$. By (P1) and the fact that $\Phi_{*}:\mathbb{R}^{n}\to \mathfrak{t}\subset 
	\mathfrak{aut}(N,g)$ is a bijection (Lemma \ref{nefkkwknefknwknk}), there are $\xi'\in \mathbb{R}^{n}$ and 
	$\phi\in \textup{Aut}(N,g)^{0}$ such that $\xi=\textup{Ad}_{\phi}\Phi_{*}\xi'$, and thus 
	\begin{eqnarray*}
		f&=&\overline{\moment}^{\xi}+r=\overline{\moment}^{\textup{Ad}_{\phi}\Phi_{*}\xi'}+r =
		\overline{\moment}^{\Phi_{*}\xi'}\circ \phi^{-1}+r \\
		&=& (\moment^{\xi'}-\langle C,\xi'\rangle)\circ \phi^{-1}+r\\
		&=&\moment^{\xi'}\circ \phi^{-1}-\langle C,\xi'\rangle+r, 
	\end{eqnarray*}
	where, in the first line, we have used the fact that $\overline{\moment}$ is equivariant. Set 
	$\varphi'=\phi^{-1}$ and $r'=-\langle C,\xi'\rangle+r$. Then $f=\moment^{\xi'}\circ \varphi'+r'$, 
	where $\xi'\in \mathbb{R}^{n}$, $\varphi'\in \textup{Aut}(N,g)^{0}$ and $r'\in \mathbb{R}$. In view of 
	Lemma \ref{ndkndkneksdnknk}, this shows (a). To see (b), suppose that $\moment^{\xi}\circ \varphi+r=\moment^{\xi'}\circ \varphi'+r'$, 
	where $\xi,\xi'\in \mathbb{R}^{n}$, $\varphi,\varphi'\in \textup{Aut}(N,g)^{0}$ and $r,r'\in \mathbb{R}$. 
	An argument analogous to the one above shows that $\moment^{\xi}\circ \varphi+r
	=\overline{\moment}^{\textup{Ad}_{\varphi^{-1}}\Phi_{*}\xi}+\langle C,\xi\rangle+r$ and hence 
	\begin{eqnarray*}
		\overline{\moment}^{\textup{Ad}_{\varphi^{-1}}\Phi_{*}\xi}+\langle C,\xi\rangle+r
		=\overline{\moment}^{\textup{Ad}_{(\varphi')^{-1}}\Phi_{*}\xi'}+\langle C,\xi'\rangle+r'.
	\end{eqnarray*}
	It follows from this and Lemma \ref{ndkndkneksdnknk} that 
	$\textup{Ad}_{\varphi^{-1}}\Phi_{*}\xi= \textup{Ad}_{(\varphi')^{-1}}\Phi_{*}\xi'$
	and $\langle C,\xi\rangle+r=\langle C,\xi'\rangle+r'$. The first equation shows that 
	$\xi,\xi'\in\mathbb{R}^{n}\cong \mathfrak{t}$ are in the same $\textup{Aut}(N,g)^{0}$-orbit. By (P2), this implies 
	that there is $w\in W$ such that $\xi=w\cdot \xi'$. Thus 
	\begin{eqnarray*}
		(\xi,r)&=&(w\cdot \xi', \langle C,\xi'-\xi\rangle+r') =(w\cdot \xi', \langle C,\xi'-w\cdot \xi'\rangle+r')\\
			&=&(w\cdot \xi', \sigma_{\moment}(w)(\xi')+r') = w\cdot (\xi',r'). 
	\end{eqnarray*}
	The proposition follows. 
\end{proof}

	We end this section with a simple but important example. 
	Let $\textup{Herm}_{n}$ denote the space of Hermitian matrices of size $n$ and let $\mathscr{K}(\mathbb{P}_{n}(c))$ be the 
	space of K\"{a}hler functions on the complex projective space $\mathbb{P}_{n}(c)$. 
	Given $H\in \textup{Herm}_{n+1}$, we will denote by $f_{H}:\mathbb{P}_{n}(c)\to \mathbb{R}$ the function defined by 
	\begin{eqnarray*}
		f_{H}([z])=\dfrac{\langle z,Hz\rangle}{\langle z,z\rangle},
	\end{eqnarray*}
	where $[z]=[z_{1},...,z_{n+1}]\in \mathbb{P}_{n}(c)$ (homogeneous 
	coordinates) and $\langle z,w\rangle=\overline{z}_{1}w_{1}+...+\overline{z}_{n+1}w_{n+1}$ 
	is the usual Hermitian product on $\mathbb{C}^{n+1}$. It is well-known that the map 
	$\textup{Herm}_{n+1}\to \mathscr{K}(\mathbb{P}_{n}(c))$ is a bijection (see, e.g., \cite{Cirelli-Quantum}).

	Given a complex matrix $U$, we will denote by $U^{*}$ its conjugate. When $U\in SU(n+1)$, we will denote 
	by $\phi_{U}$ the holomorphic isometry of 
	$\mathbb{P}_{n}(c)$ defined by $\phi_{U}([z])=[Uz]$ (recall that every holomorphic isometry of $\mathbb{P}_{n}(c)$ is of this 
	form). 
	
\begin{proposition}\label{neknkefnwknfekn}
	Let $\mathbb{P}_{n}(c)$, $\Phi$ and $\moment_{c}$ be as in Proposition \ref{neknknekfnknn}. 
	Then for every $\xi\in \mathbb{R}^{n}$, $U\in SU(n+1)$ and $r\in \mathbb{R}$, we have 
	$\moment_{c}^{\xi}\circ \phi_{U}+r=f_{H}$, where $H$ is the Hermitian matrix defined by $H=U^{*}DU$,
	and 
	\begin{eqnarray*}
		D=\begin{bmatrix}
			r-\tfrac{4\pi\xi_{1}}{c}   &  & 0 & \\
					      &  &   & \\
			                           & r-\tfrac{4\pi\xi_{n}}{c} &  &  \\
			0  && r&
		\end{bmatrix}.
	\end{eqnarray*}
\end{proposition}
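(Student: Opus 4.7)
The plan is to verify the identity by a direct computation, using the explicit formulas for $\moment_c$ and $\phi_U$ provided in the excerpt, together with the unitarity of $U$. There is no real obstacle here; the only subtlety is keeping track of the asymmetric role played by the last coordinate, which explains why the last diagonal entry of $D$ has no $\xi$-correction.

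First I would substitute the definition $\phi_U([z])=[Uz]$ into the formula
\[
\moment_c^\xi([w]) \;=\; -\dfrac{4\pi}{c}\sum_{k=1}^{n}\xi_k\,\dfrac{|w_k|^2}{\langle w,w\rangle},
\]
and use $\langle Uz,Uz\rangle=\langle z,z\rangle$ (since $U\in SU(n+1)$ is unitary) to obtain
\[
\moment_c^\xi(\phi_U([z]))+r \;=\; \dfrac{1}{\langle z,z\rangle}\Bigl(-\dfrac{4\pi}{c}\sum_{k=1}^{n}\xi_k\,|(Uz)_k|^2 + r\,\langle z,z\rangle\Bigr).
\]

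Next I would expand the $r\langle z,z\rangle=r\langle Uz,Uz\rangle=r\sum_{k=1}^{n+1}|(Uz)_k|^2$ and group terms coordinate by coordinate. For $k=1,\dots,n$ this produces the coefficient $r-\tfrac{4\pi\xi_k}{c}$ in front of $|(Uz)_k|^2$, while for $k=n+1$ only the $r$ term survives. Thus the numerator becomes
\[
\sum_{k=1}^{n+1} d_k\,|(Uz)_k|^2 \;=\; \langle Uz,\,D\,Uz\rangle,
\]
where $D=\mathrm{diag}(d_1,\dots,d_{n+1})$ is exactly the diagonal matrix given in the statement.

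Finally, using adjoints, $\langle Uz,DUz\rangle = \langle z,U^*DU z\rangle$, so
\[
\moment_c^\xi(\phi_U([z]))+r \;=\; \dfrac{\langle z,\,U^*DU\,z\rangle}{\langle z,z\rangle} \;=\; f_{H}([z])
\]
with $H:=U^*DU$, which is manifestly Hermitian since $D$ is real diagonal. This holds for every $[z]\in\mathbb{P}_n(c)$, proving the claim.
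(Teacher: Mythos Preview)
Your proof is correct and is precisely the direct calculation the paper alludes to (the paper's proof consists of the single sentence ``By a direct calculation''). The key steps---using unitarity to replace $\langle Uz,Uz\rangle$ by $\langle z,z\rangle$, absorbing $r$ via $r\langle Uz,Uz\rangle=r\sum_k|(Uz)_k|^2$, and rewriting $\langle Uz,DUz\rangle=\langle z,U^*DUz\rangle$---are exactly what is needed.
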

\begin{proof}
	By a direct calculation.
\end{proof}

\section{Spectral theory on toric exponential families}\label{nfeqnkdekwnn}

	In this section, we specialize to the case in which a toric K\"{a}hler manifold $\Phi:\mathbb{T}^{n}\times N\to N$ 
	is the regular torification of an exponential family $\mathcal{E}$ defined over a finite set, and prove a 
	strengthened version of Proposition \ref{nenekfnsddnekn}. The proof requires some preliminary results that 
	we present in separate sections. Some results are technical, while others are of independent interest. 
	We begin by specifying our setting. 

\begin{setting}\label{mnekmnwknefknk}
	\textbf{}
	\begin{enumerate}[(a)]
	\item Let $\Omega=\{x_{1},...,x_{m}\}$ be a finite set endowed with the counting measure. Let
	$\mathcal{E}$ be an exponential family of dimension $n$ defined over $\Omega$, with elements of the form 
	$p(x;\theta)=p_{\theta}(x)=\textup{exp}\big[C(x)+\sum_{k=1}^{n}\theta_{k}F_{k}(x)-\psi(\theta)\big]$, where 
	\begin{eqnarray*}
		x\in \Omega,\,\,\,\,\,\,\theta\in \mathbb{R}^{n},\,\,\,\,\,\,
		C,F_{1},...,F_{n}:\Omega\to \mathbb{R},\,\,\,\,\,\,\psi:\mathbb{R}^{n}\to \mathbb{R}. 
	\end{eqnarray*}
	We assume that the functions $F_{1},...,F_{n},1$ are linearly independent. 

	\item Let $\Phi:\mathbb{T}^{n}\times N\to N$ be a K\"{a}hler toric manifold, with K\"{a}hler metric $g$ and momentum map 
		$\moment:N\to \mathbb{R}^{n}$ (thus $N$ is compact, connected, $\Phi$ is effective, holomorphic and isometric,
		and $\textup{dim}_{\mathbb{R}}N=2n$). 
		We assume that $N$, equipped with $\Phi$, is a regular torification of $\mathcal{E}$. 
	\end{enumerate}
\end{setting}

	As a matter of notation, we shall use $\mathcal{A}_{\mathcal{E}}$ to denote the 
	vector space of real-valued functions $f$ on $\Omega$ spanned by $F_{1},...,F_{n},1$ 
	(thus $\textup{dim}\mathcal{A}_{\mathcal{E}}=n+1$). 
	The map $F:\Omega\to \mathbb{R}^{n}$, defined by $F(x)=(F_{1}(x),...,F_{n}(x))$, 
	is interpreted in the statistical literature as an unbiased estimator for the model $\mathcal{E}$. 
	We shall call it the \textit{estimator map}. 
	The expectation of a random variable $X:\Omega\to \mathbb{R}$, with respect to 
	a probability function $p:\Omega\to \mathbb{R}$, will be denoted by $\mathbb{E}_{p}(X)$.

\begin{remark}\label{nemwwnkefnkwnk}
	The estimator map $F$ is injective if and only if $\mathcal{A}_{\mathcal{E}}$ 
	separates the points of $\Omega$, that is, 
	if for every pair of distinct points $x$ and $y$ in $\Omega$, there is $X\in \mathcal{A}_{\mathcal{E}}$ 
	such that $X(x)\neq X(y)$.
\end{remark}

%

\subsection{Extending compatible projections.}\label{ndknvfknkfnvknk}
	Let $N^{\circ}$ be the set of points $p\in N$ where the torus action is free; it is a connected open dense subset of $N$ 
	by (F4). The purpose of this section is to show that every compatible projection $\kappa:N^{\circ}\to \mathcal{E}$ 
	(see Definition \ref{fjenkrkefnkn}) extends continuously to a map $K:N\to \overline{\mathcal{E}}$, where 
	$\overline{\mathcal{E}}$ is an appropriate superset of $\mathcal{E}$.
	
	Let $\mathcal{P}_{m}$ be the set of all probability functions on $\Omega$. Thus a function 
	$p:\Omega\to \mathbb{R}$ is in $\mathcal{P}_{m}$ if and only if $p(x)\geq 0$ for all $x\in \Omega$ 
	and $\sum_{x\in \Omega}p(x)=1$. We give $\mathcal{P}_{m}$ the topology for which the map $\mathcal{P}_{m}\to \mathbb{R}^{m}$, 
	$p\mapsto (p(x_{1}),...,p(x_{m}))$ is a homeomorphism onto its image in the subspace topology.
	Thus a sequence $(p_{j})_{j\in \mathbb{N}}$ of points $p_{j}\in \mathcal{P}_{m}$ converges to some $p\in \mathcal{P}_{m}$ 
	if and only if $p_{j}(x)\to p(x)$ for all $x\in \Omega$. 
	Since $\mathcal{E}$ is naturally a subset of $\mathcal{P}_{m}$, we may consider its closure 
	$\overline{\mathcal{E}}$ in $\mathcal{P}_{m}$. To determine $\overline{\mathcal{E}}$, it is often convenient 
	to use the following property of closed maps, whose proof is left to the reader. 

\begin{lemma}\label{nfekenkrnefkn}
	Let $f:X\to Y$ be a continuous map between topological spaces. Suppose that $f$ is closed (in the sense that 
	the image of every closed set is closed). Then $f(\overline{A})=\overline{f(A)}$ for every set $A\subseteq X$
	(here the closure of a set $B$ is denoted by $\overline{B}$). 
\end{lemma}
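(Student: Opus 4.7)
The plan is to prove the two set inclusions $f(\overline{A})\subseteq \overline{f(A)}$ and $\overline{f(A)}\subseteq f(\overline{A})$ separately, noting that only the second requires the closedness hypothesis on $f$.

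For the forward inclusion $f(\overline{A})\subseteq \overline{f(A)}$, I would use only the continuity of $f$. The set $\overline{f(A)}$ is closed in $Y$, so by continuity its preimage $f^{-1}(\overline{f(A)})$ is closed in $X$. Since $A\subseteq f^{-1}(f(A))\subseteq f^{-1}(\overline{f(A)})$, and the latter is closed, taking closures yields $\overline{A}\subseteq f^{-1}(\overline{f(A)})$. Applying $f$ to both sides gives $f(\overline{A})\subseteq \overline{f(A)}$.

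For the reverse inclusion $\overline{f(A)}\subseteq f(\overline{A})$, I would invoke the closedness hypothesis. Since $\overline{A}$ is closed in $X$ and $f$ sends closed sets to closed sets, $f(\overline{A})$ is closed in $Y$. Because $A\subseteq \overline{A}$ we have $f(A)\subseteq f(\overline{A})$, and taking the closure of the smaller set yields $\overline{f(A)}\subseteq f(\overline{A})$, as $f(\overline{A})$ is already closed.

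There is essentially no obstacle here; this is a standard topological fact whose only subtlety is recognizing which direction uses continuity and which uses closedness. Combining both inclusions gives the stated equality $f(\overline{A})=\overline{f(A)}$.
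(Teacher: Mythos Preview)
Your proof is correct; this is the standard argument, separating the two inclusions and using continuity for $f(\overline{A})\subseteq \overline{f(A)}$ and the closed-map hypothesis for $\overline{f(A)}\subseteq f(\overline{A})$. The paper itself leaves the proof to the reader, so there is nothing further to compare.
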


	Note that a continuous map $f:X\to Y$ from a compact space $X$ to a Hausdorff space $Y$ is automatically closed. 

\begin{example}
	The set $\mathcal{B}(n)$ of Binomial distributions defined over $\Omega=\{0,...,n\}$ 
	is the image of the continuous map $f:(0,1)\to \mathcal{P}_{n+1}$ 
	defined by $f(q)(k)=\binom{n}{k}q^{k}\bigl(1-q\bigr)^{n-k}$, 
	where $q\in (0,1)$ and $k\in \{0,...,n\}$. The map $f$ extends continuously 
	to a closed map $f:[0,1]\to \mathcal{P}_{n+1}$, where $f(0)=\delta_{0}$ and $f(1)=\delta_{n}$ are defined by 
	\begin{eqnarray*}
		\delta_{i}(j)=
		\left\lbrace
		\begin{array}{lll}
			1  & \textup{if} & i=j,\\
			0  & \textup{if} & i\neq j.
		\end{array}
		\right. \,\,\,\,\,\,\,\,\,\,(i\in \{0,n\},\,\,j\in \Omega)
	\end{eqnarray*}
	By Lemma \ref{nfekenkrnefkn}, we have $\overline{\mathcal{B}(n)}=\overline{f((0,1))}=f(\overline{(0,1)})=f([0,1])$, and so 
	\begin{eqnarray*}
		\overline{\mathcal{B}(n)}=\mathcal{B}(n)\cup \{\delta_{0},\delta_{n}\}. 
	\end{eqnarray*}
\end{example}
\begin{example}
	Let $\Delta_{n}=\{(y_{1},...,y_{n})\in \mathbb{R}^{n}\,\,\vert\,\,y_{k}\geq 0\,\,\forall k,\,\,\,y_{1}+...+y_{n}\leq 1\}$,
	and let $f:\Delta_{n}\to \mathcal{P}_{n+1}$ be the continuous closed surjective map defined by 
	\begin{eqnarray*}
		f(y)(x_{k})=
		\left\lbrace
		\begin{array}{lll}
			y_{k}                & \textup{if} & k<n+1,\\
			1-(y_{1}+...+y_{n})  & \textup{if} & k=n+1,
		\end{array}
		\right. 
	\end{eqnarray*}
	where $y=(y_{1},...,y_{n})\in \Delta_{n}$ and $x_{k}\in \Omega=\{x_{1},...,x_{n+1}\}$. A straightforward verification 
	shows that $f$ maps the topological interior of $\Delta_{n}$ onto
	$\mathcal{P}_{n+1}^{\times}=\{p\in \mathcal{P}_{n+1}\,\,\vert\,\, p(x)>0\,\,\forall x\in \Omega\}$, that is, 
	$f(\Delta_{n}^{\circ})=\mathcal{P}_{n+1}^{\times}$. 
	Because $\Delta_{n}$ is a closed convex set whose topological interior is nonempty, we have $\overline{\Delta_{n}^{\circ}}=\Delta_{n}$ 
	(see \cite{Rockafellar}, Theorem 6.3). It follows from this and Lemma \ref{nfekenkrnefkn} 
	that $\overline{\mathcal{P}_{n+1}^{\times}}= \overline{f(\Delta_{n}^{\circ})}=f(\overline{\Delta_{n}^{\circ}})=f(\Delta_{n})$, and so
	\begin{eqnarray*}
		\overline{\mathcal{P}_{n+1}^{\times}}=\mathcal{P}_{n+1}. 
	\end{eqnarray*}
\end{example}

	Throughout the rest of this section, we let $\pi=\kappa\circ \tau:T\mathcal{E}\to \mathcal{E}$ be a toric factorization. 
	Let $j:\mathcal{E}\to \mathcal{P}_{m}^{\times}$ be the inclusion map. Recall that the complex projective space $\mathbb{P}_{m-1}(1)$,
	equipped with the torus action $\Phi_{m-1}$, is a regular torification of 
	$\mathcal{P}_{m}^{\times}$ and that the map $\tau_{\textup{can}}:T\mathcal{P}_{m}^{\times}=\mathbb{C}^{m-1}
	\to \mathbb{P}_{m-1}(1)^{\circ}$ defined 
	by $\tau_{\textup{can}}(z)=\big[e^{z_{1}/2},...,e^{z_{m-1}/2},1\,\big]$ is a compatible covering map 
	(see Propositions \ref{nfeknwknwknk} and \ref{njewndknknknk}).
	Let $\kappa_{\textup{can}}:\mathbb{P}_{m-1}(1)^{\circ}\to \mathcal{P}_{m}^{\times}$ be the map defined by 
	$\kappa_{\textup{can}}([z])(x_{k})=\tfrac{|z_{k}|^{2}}{|z_{1}|^{2}+...+|z_{m}|^{2}}$. Then $(\kappa_{\textup{can}},\tau_{\textup{can}})$ 
	is a toric factorization. Let $\textup{lift}(j):N\to \mathbb{P}_{m-1}(1)$ be the lift of $j$ with respect to $\tau$ and 
	$\tau_{\textup{can}}$ (see Section \ref{neknkenkfnkn}). Let $K_{\textup{can}}:\mathbb{P}_{m-1}(1)\to \mathcal{P}_{m}$ 
	be the continuous map defined by 
	\begin{eqnarray}\label{nfewnkefnwkk}
		K_{\textup{can}}([z])(x_{k})=\dfrac{|z_{k}|^{2}}{|z_{1}|^{2}+...+|z_{m}|^{2}}, 
	\end{eqnarray}
	where $[z]=[z_{1},...,z_{m}]\in \mathbb{P}_{m-1}(1)$ and $x_{k}\in \Omega$. The following commutative diagram 
	summarizes the situation:
	\begin{eqnarray}\label{nfeknkrnkenk}
	\begin{tikzcd}
		N^{\circ} \arrow{r}{\displaystyle \textup{lift}(j)} \arrow[swap]{d}{\displaystyle \kappa} 
		&   \mathbb{P}_{m-1}(1)^{\circ}\arrow{d}{\displaystyle \kappa_{\textup{can}}} \arrow{r}{\displaystyle i_{1}}  
			&   \mathbb{P}_{m-1}(1)  
			\arrow{d}{\displaystyle K_{\textup{can}}}     \\
		 \mathcal{E} \arrow[swap]{r}{\displaystyle j} & \mathcal{P}_{m}^{\times} \arrow[swap]{r}{\displaystyle i_{2}}  
		 &   \mathcal{P}_{m}
	\end{tikzcd},
	\end{eqnarray}
	where $i_{1}$ and $i_{2}$ are inclusion maps. The next lemma follows immediately from \eqref{nfeknkrnkenk} and the fact that 
	$N^{\circ}$ is dense in $N$.

\begin{lemma}\label{nfekwnkefnkwdnk}
	$(K_{\textup{can}}\circ \textup{lift}(j))(N)\subseteq \overline{\mathcal{E}}$. 
\end{lemma}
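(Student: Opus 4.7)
The plan is to leverage the commutative diagram \eqref{nfeknkrnkenk} to check the containment on the dense open set $N^{\circ}$, and then use a continuity/closedness argument to extend it to all of $N$.

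First I would verify that on $N^{\circ}$ the composition $K_{\textup{can}}\circ \textup{lift}(j)$ actually takes values in $\mathcal{E}$. Since $\textup{lift}(j)$ is a lift, the left square of \eqref{nfeknkrnkenk} commutes and in particular $\textup{lift}(j)$ carries $N^{\circ}$ into $\mathbb{P}_{m-1}(1)^{\circ}$; on the latter set the restriction of $K_{\textup{can}}$ agrees (via the inclusion $i_{1}$) with $i_{2}\circ \kappa_{\textup{can}}$. Chasing the diagram gives, for every $p\in N^{\circ}$,
\begin{eqnarray*}
(K_{\textup{can}}\circ \textup{lift}(j))(p) = (i_{2}\circ \kappa_{\textup{can}}\circ \textup{lift}(j))(p) = (j\circ \kappa)(p)\in j(\mathcal{E}),
\end{eqnarray*}
so $(K_{\textup{can}}\circ \textup{lift}(j))(N^{\circ})\subseteq \mathcal{E}\subseteq \mathcal{P}_{m}$.

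Next I would upgrade this to all of $N$ using Lemma \ref{nfekenkrnefkn}. By Setting \ref{mnekmnwknefknk}, $N$ is compact, and $\mathcal{P}_{m}$ is Hausdorff, so the continuous map $K_{\textup{can}}\circ \textup{lift}(j):N\to \mathcal{P}_{m}$ is closed. Applying Lemma \ref{nfekenkrnefkn} to this map with $A=N^{\circ}$ yields
\begin{eqnarray*}
(K_{\textup{can}}\circ \textup{lift}(j))\bigl(\overline{N^{\circ}}\bigr) \,=\, \overline{(K_{\textup{can}}\circ \textup{lift}(j))(N^{\circ})} \,\subseteq\, \overline{\mathcal{E}}.
\end{eqnarray*}
By property (F4), $N^{\circ}$ is dense in $N$, so $\overline{N^{\circ}}=N$, and the conclusion $(K_{\textup{can}}\circ \textup{lift}(j))(N)\subseteq \overline{\mathcal{E}}$ follows.

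I do not anticipate any real obstacle: the work has already been done in setting up the lifting procedure (Section \ref{neknkenkfnkn}) and in establishing that $N^{\circ}$ is open and dense in a compact $N$. The only small point to be careful about is to invoke compactness of $N$ explicitly so that the closed-map lemma applies; after that, the density of $N^{\circ}$ closes the argument immediately.
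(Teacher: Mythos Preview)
Your proof is correct and follows essentially the same approach as the paper, which simply cites the commutative diagram \eqref{nfeknkrnkenk} together with the density of $N^{\circ}$ in $N$. Your invocation of Lemma \ref{nfekenkrnefkn} is slightly more than necessary (the elementary fact that continuous maps satisfy $f(\overline{A})\subseteq \overline{f(A)}$ already suffices for the inclusion), but it is a perfectly valid route and is clearly what the paper had in mind given the placement of that lemma.
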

	Thus, the composition $K_{\textup{can}}\circ \textup{lift}(j)$ defines a continuous map 
	$N\to \overline{\mathcal{E}}$ that we will denote by $K$.
	By construction, the following diagram commutes,
	\begin{eqnarray}\label{knknkmmm}
	\begin{tikzcd}
		N^{\circ} \arrow{r}{\displaystyle } \arrow[swap]{d}{\displaystyle \kappa} 
		&   N \arrow{d}{\displaystyle K}  \\
		 \mathcal{E} \arrow{r}{\displaystyle } & \overline{\mathcal{E}}
	\end{tikzcd},
	\end{eqnarray}
	where horizontal maps are inclusion maps. Furthermore, since $N$ is compact and $N^{\circ}$ is dense in $N$, Lemma \ref{nfekwnkefnkwdnk} 
	implies that 
	\begin{eqnarray*}
		K(N)=\overline{K(N^{\circ})}=\overline{\kappa(N^{\circ})}=\overline{\mathcal{E}}.
	\end{eqnarray*}
	Thus $K(N)=\overline{\mathcal{E}}$, that is, $K$ is surjective.

\begin{definition}
	Let $\mathcal{E},N,\kappa$ and $K$ be as above. 
	We shall say that the surjective map $K:N\to \overline{\mathcal{E}}$ is the \textit{natural extension} of 
	the compatible projection $\kappa.$
\end{definition}

	Clearly, $K$ is the unique continuous map $N\to \overline{\mathcal{E}}$ 
	satisfying $\textup{ev}_{x}\circ K=\textup{ev}_{x}\circ \kappa$ on $N^{\circ}$
	for all $x\in \Omega$, where $\textup{ev}_{x}:\mathcal{P}_{m}\to \mathbb{R}$ is the evaluation map, 
	that is, $\textup{ev}_{x}(p)=p(x)$. It is also worth noting that $\textup{ev}_{x}\circ K:N\to \mathbb{R}$ is 
	smooth for all $x\in \Omega$. 

%

\begin{example}
	If $\mathcal{E}=\mathcal{P}_{m+1}^{\times}$, then $\overline{\mathcal{E}}=\mathcal{P}_{m+1}$ and the 
	natural extension of $\kappa_{\textup{can}}$ is $K_{\textup{can}}$. 
\end{example}

\begin{example}\label{nekwnkefnk}
	If $\mathcal{E}=\mathcal{B}(n)$, then $\overline{\mathcal{E}}=\mathcal{B}(n)\cup\{\delta_{0},\delta_{n}\}$. 
	A regular torification of $\mathcal{B}(n)$ is $\mathbb{P}_{1}(\tfrac{1}{n})$, endowed with the torus action $\Phi_{1}$. 
	The map $\kappa:\mathbb{P}_{1}(\tfrac{1}{n})^{\circ}\to \mathcal{B}(n)$, defined by 
	$\kappa([z_{1},z_{2}])(k) =\binom{n}{k}\tfrac{|z_{1}^{k}z_{2}^{n-k}|^{2}}{(|z_{1}|^{2}+|z_{2}|^{2})^{n}}$, is a compatible projection. 
	Let $K:\mathbb{P}_{1}(\tfrac{1}{n})\to \overline{\mathcal{B}(n)}$ be the natural extension of $\kappa.$ 
	On the set $\mathbb{P}_{1}(\tfrac{1}{n})\backslash\mathbb{P}_{1}(\tfrac{1}{n})^{\circ}=\{[1,0],[0,1]\}$ where the torus action 
	is not free, $K$ is given by 
	\begin{eqnarray*}
		K([1,0])=\delta_{n}, \,\,\,\,\,\,\,\,\,K([0,1])=\delta_{0}. 
	\end{eqnarray*}
\end{example}

	We end this section with the following result, which follows immediately from (P8) and 
	the fact that $N^{\circ}$ is dense in $N$. 

\begin{lemma}\label{nfeknkrenknfk}
	Suppose that the estimator map $F:\Omega\to \mathbb{R}^{n}$ is injective. If $K$ and $K'$ are the natural 
	extensions of some compatible projections, then there is a permutation $\sigma\in \textup{Perm}(\mathcal{E})\subseteq 
	\mathbb{S}_{m}$ such that $K(p)(x_{k})=K'(p)(x_{\sigma(k)})$ for all $p\in N$ and all $k\in \{1,...,m\}$. 
\end{lemma}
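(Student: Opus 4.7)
The plan is to combine property (P8), which gives the permutation on the dense open set $N^\circ$, with a continuity/density argument to push the identity to all of $N$. I will not need any new ingredients beyond what the excerpt already provides.

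First, I would unpack the hypothesis: by definition of ``natural extension'', there exist compatible projections $\kappa, \kappa' : N^\circ \to \mathcal{E}$ such that $K$ and $K'$ restrict to $\kappa$ and $\kappa'$ on $N^\circ$ (this is exactly the commutative diagram \eqref{knknkmmm}). Since $F:\Omega\to\mathbb{R}^n$ is injective, property (P8) applies directly to the pair $(\kappa,\kappa')$ and produces a permutation $\sigma\in\textup{Perm}(\mathcal{E})\subseteq \mathbb{S}_m$ such that
\begin{eqnarray*}
	\kappa(p)(x_k)=\kappa'(p)(x_{\sigma(k)})
\end{eqnarray*}
for all $p\in N^\circ$ and all $k\in\{1,\ldots,m\}$. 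Substituting $\kappa=K|_{N^\circ}$ and $\kappa'=K'|_{N^\circ}$, the identity $K(p)(x_k)=K'(p)(x_{\sigma(k)})$ holds on $N^\circ$.

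The second and final step is to extend this identity to all of $N$. For each fixed $k$, the maps
\begin{eqnarray*}
	p\mapsto K(p)(x_k)=(\textup{ev}_{x_k}\circ K)(p),\qquad p\mapsto K'(p)(x_{\sigma(k)})=(\textup{ev}_{x_{\sigma(k)}}\circ K')(p)
\end{eqnarray*}
are continuous from $N$ to $\mathbb{R}$, since $K$ and $K'$ are continuous by construction and the evaluation maps $\textup{ev}_{x}:\mathcal{P}_m\to\mathbb{R}$ are continuous. These two continuous maps agree on $N^\circ$, which is dense in $N$ by (F4). Because $\mathbb{R}$ is Hausdorff, two continuous maps into $\mathbb{R}$ that agree on a dense subset agree everywhere, so the identity propagates to all $p\in N$.

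There is essentially no obstacle in this proof; the substance has been isolated in (P8) and in the density statement (F4). The only point worth double-checking is that the permutation $\sigma$ produced by (P8) does not depend on $p$, which is exactly how (P8) is stated in the excerpt, so no additional argument is required.
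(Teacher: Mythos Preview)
Your proof is correct and matches the paper's own argument exactly: the paper states that the lemma ``follows immediately from (P8) and the fact that $N^{\circ}$ is dense in $N$,'' and you have simply written out those two steps in full detail.
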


\subsection{Momentum map.} In this section, we focus our attention on the momentum map $\moment:N\to \mathbb{R}^{n}$ associated to 
	the torus action $\Phi:\mathbb{T}^{n}\times N\to N$. Our purpose is to show Proposition \ref{nfeknekfnkn} below, which 
	expresses the fact that the coordinate functions of $\moment$ are expectations (in the probabilistic sense)
	of appropriate random variables on $\Omega=\{x_{1},...,x_{m}\}$. For the convenience of the reader, 
	we begin with a discussion of the notion of momentum map in the context of torification. The material
	is mostly taken from \cite{molitor-toric}.
	
	Let $\kappa:N^{\circ}\to \mathcal{E}$ be a compatible projection. 
	We assume that $\kappa$ is induced by the toric parametrization $(\mathscr{L},Z,F)$ 
	(see Definition \ref{fjenkrkefnkn}). Write $Z=(Z_{1},...,Z_{n})$, where each $Z_{i}$ is a vector field on $\mathcal{E}$, 
	and define the $n\times n$ invertible matrix $A=(a_{ij})$ via the formulas
	\begin{eqnarray*}
		Z_{i}=\sum_{j=1}^{n}a_{ij}\dfrac{\partial}{\partial \theta_{j}}, \,\,\,\,\,\,\,i\in \{1,...,n\}, 
	\end{eqnarray*}
	where the $\theta_{j}$'s are the natural parameters of $\mathcal{E}$ (see Setting \ref{mnekmnwknefknk}). Below 
	we will regard $A$ as a linear bijection from $\mathbb{R}^{n}$ to $\mathbb{R}^{n}$. 

	In \cite{molitor-toric}, it is proven that there is $C=(C_{1},...,C_{n})\in \mathbb{R}^{n}$ such that the following holds on $N^{\circ}$:
	\begin{eqnarray}\label{nfekdwnkrnfekn}
		\moment=-A\circ \eta\circ \kappa+C, 
	\end{eqnarray}
	where $\eta=(\eta_{1},...,\eta_{n}):\mathcal{E}\to \mathbb{R}^{n}$ is the map whose coordinate functions are 
	defined by 
	\begin{eqnarray*}
		\eta_{i}(p)=\mathbb{E}_{p}(F_{i})=\sum_{x\in \Omega}F_{i}(x)p(x). 
	\end{eqnarray*}
	In the statistical literature, the functions $\eta_{i}$'s are called \textit{expectation parameters} (see, e.g, 
	\cite{Amari-Nagaoka}). 

	Clearly, $\eta$ extends to a continuous map $\mathcal{P}_{m}\to\mathbb{R}^{n}$, also denoted by $\eta$, 
	by letting $\eta(p)=(\mathbb{E}_{p}(F_{1}),...,\mathbb{E}_{p}(F_{n}))$. Let $K:N\to \overline{\mathcal{E}}$ 
	be the natural extension of $\kappa$. In view of \eqref{nfekdwnkrnfekn}, the continuous 
	functions $\moment$ and $-A\circ \eta\circ K+C$ coincide on $N^{\circ}$, which is dense in $N$, and thus they 
	coincide on $N$. Thus, for any $p\in N$,  
	\begin{eqnarray*}
		\moment(p)=-(A\circ \eta\circ K)(p)+C=\big(\mathbb{E}_{K(p)}(X_{1}),...,\mathbb{E}_{K(p)}(X_{n})\big),
	\end{eqnarray*}
	where $X_{j}$ is the random variable on $\Omega$ defined by 
	\begin{eqnarray*}
		X_{j}:=-\sum_{k=1}^{n}a_{jk}F_{k}+C_{j}.
	\end{eqnarray*}
	Note that $X_{j}\in \mathcal{A}_{\mathcal{E}}$ for every $j\in \{1,...,n\}$, and that 
	the $X_{j}$'s are linearly independent. We thus have proved the existence part of the following lemma.

\begin{lemma}\label{nfeknkrkefnk}
	There are random variables $X_{1},...,X_{n}\in \mathcal{A}_{\mathcal{E}}$ such that 
	\begin{eqnarray}\label{nfekknrkefnwknfk}
		\moment(p)=\big(\mathbb{E}_{K(p)}(X_{1}),...,\mathbb{E}_{K(p)}(X_{n})\big) 
	\end{eqnarray}
	for all $p\in N$. Furthermore, the $X_{j}$'s in \eqref{nfekknrkefnwknfk} are unique. 
\end{lemma}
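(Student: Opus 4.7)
The existence half is essentially carried out in the discussion immediately preceding the statement, so my plan is to collect that argument into the first step. Fix a compatible projection $\kappa$ induced by a toric parametrization $(\mathscr{L},Z,F)$, let $A=(a_{ij})$ be the invertible matrix relating $Z$ to the natural parameter basis $\partial/\partial\theta_{j}$, and invoke \eqref{nfekdwnkrnfekn} to obtain $\moment=-A\circ \eta\circ \kappa+C$ on $N^{\circ}$. Since the expectation parameter map extends continuously to $\mathcal{P}_{m}$ by the same formula $\eta(p)=(\mathbb{E}_{p}(F_{1}),\dots,\mathbb{E}_{p}(F_{n}))$ and $K$ extends $\kappa$ across $N^{\circ}\hookrightarrow N$, both sides of the identity extend continuously to $N$ and coincide on the dense subset $N^{\circ}$, hence on all of $N$. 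Setting $X_{j}:=-\sum_{k}a_{jk}F_{k}+C_{j}\in \mathcal{A}_{\mathcal{E}}$ and using linearity of expectation then yields \eqref{nfekknrkefnwknfk}.

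For uniqueness, the central observation I would establish is that the components $\moment_{1},\dots,\moment_{n}$ of the momentum map together with the constant function $1$ are linearly independent on $N$. I would prove this by a short symplectic argument: if $\sum_{i}\lambda_{i}\moment_{i}+c\equiv 0$ on $N$ for some $\lambda=(\lambda_{1},\dots,\lambda_{n})\in \mathbb{R}^{n}$ and $c\in \mathbb{R}$, then $d\moment^{\lambda}=0$, and the defining property $\omega(\lambda_{N},\,\cdot\,)=d\moment^{\lambda}$ combined with nondegeneracy of $\omega$ gives $\lambda_{N}=0$ on $N$. Effectiveness of $\Phi$ (part of the definition of K\"ahler toric manifold) then forces $\lambda=0$, whence also $c=0$.

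Given this linear independence, suppose that $X_{1},\dots,X_{n}$ and $X_{1}',\dots,X_{n}'$ both satisfy \eqref{nfekknrkefnwknfk}. Invertibility of $A$ makes $\{X_{1},\dots,X_{n},1\}$ a basis of the $(n+1)$-dimensional space $\mathcal{A}_{\mathcal{E}}$, so one can expand $X_{j}'=\sum_{i}\lambda_{ji}X_{i}+\mu_{j}$. Linearity of expectation converts the equality $\mathbb{E}_{K(p)}(X_{j}')=\moment_{j}(p)=\mathbb{E}_{K(p)}(X_{j})$ into
\[
    \sum_{i}\lambda_{ji}\moment_{i}(p)+\mu_{j}-\moment_{j}(p)=0 \qquad \text{for all } p\in N,
\]
and the linear independence established above forces $\lambda_{ji}=\delta_{ji}$ and $\mu_{j}=0$, i.e.\ $X_{j}'=X_{j}$.

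The existence half is really a bookkeeping exercise combining \eqref{nfekdwnkrnfekn} with the continuous extension of $\eta$, so I expect the only genuine point is uniqueness. The main obstacle there is identifying the correct separation statement for the functionals $X\mapsto (p\mapsto \mathbb{E}_{K(p)}(X))$ on $\mathcal{A}_{\mathcal{E}}$; I expect the cleanest route is the one above, which reduces separation to linear independence of $\moment_{1},\dots,\moment_{n},1$ and extracts the latter from effectiveness of the torus action rather than from any finer property of the exponential family.
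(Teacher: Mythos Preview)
Your existence argument is exactly the paper's: it is the discussion immediately preceding the lemma, which you have summarized correctly.

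For uniqueness the paper takes a different route. It isolates a separate lemma (Lemma~\ref{newnkfenkwnk}) stating that the linear forms $\alpha_{p}\colon X\mapsto\mathbb{E}_{p}(X)$, $p\in\mathcal{E}$, separate the points of $\mathcal{A}_{\mathcal{E}}$; the proof of that lemma uses the information-geometric fact that the expectation-parameter map $\eta\colon\mathcal{E}\to\mathbb{R}^{n}$ is a diffeomorphism onto an open set (the interior of the convex hull of $F(\Omega)$), so that one may differentiate the identity $\langle a,\eta(p)\rangle=-a_{0}$ along a short segment. Your approach bypasses this entirely: you extract the linear independence of $\moment_{1},\dots,\moment_{n},1$ directly from effectiveness of the torus action via the injectivity of $\xi\mapsto\xi_{N}$ (Lemma~\ref{wkjkjkefjwkjk}), and then reduce uniqueness to that linear independence. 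Your argument is correct and arguably more self-contained, since it stays within the symplectic-toric hypotheses and avoids invoking the openness of $\eta(\mathcal{E})$. The paper's route, on the other hand, records a separation statement about $\mathcal{A}_{\mathcal{E}}$ that is of independent interest and is reused later (in the proofs of Lemmas~\ref{nefkwnefnknkn} and~\ref{ncdknknfeknk}).
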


	The uniqueness assertion of Lemma \ref{nfeknkrkefnk} follows immediately 
	from Lemma \ref{newnkfenkwnk} (see below) and the well-known fact that the map $\eta$ 
	is a diffeomorphism from $\mathcal{E}$ onto the topological interior $\Delta^{\circ}$ of the 
	convex hull $\Delta\subset \mathbb{R}^{n}$ of the image of the estimator map $F:\Omega\to \mathbb{R}^{n}$
	(see, for example, \cite{Martin}, Proposition 3.2 and Theorem 3.3).

\begin{lemma}\label{newnkfenkwnk}
	For $p\in \mathcal{E}$, let $\alpha_{p}:\mathcal{A}_{\mathcal{E}}\to \mathbb{R}$ be the linear 
	form defined by $\alpha_{p}(X)=\mathbb{E}_{p}(X)$. Then the family $\{\alpha_{p}\}_{p\in \mathcal{E}}$ 
	separates the points of $\mathcal{A}_{\mathcal{E}}$, that is, if $\alpha_{p}(X)=0$ for all $p\in \mathcal{E}$, 
	where $X\in \mathcal{A}_{\mathcal{E}}$, then $X=0.$
\end{lemma}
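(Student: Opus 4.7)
The plan is to write $X \in \mathcal{A}_{\mathcal{E}}$ in coordinates with respect to the spanning family $\{F_1,\dots,F_n,1\}$ and then translate the vanishing condition into a statement about the expectation parameters $\eta_i$. Specifically, since $\mathcal{A}_{\mathcal{E}}$ is spanned by $F_1,\dots,F_n$ and the constant function $1$, I would write
\[
X = c_0\cdot 1 + \sum_{i=1}^{n} c_i F_i
\]
for unique scalars $c_0,c_1,\dots,c_n \in \mathbb{R}$. The hypothesis $\alpha_p(X) = \mathbb{E}_p(X) = 0$ for all $p \in \mathcal{E}$ then reads
\[
c_0 + \sum_{i=1}^{n} c_i\, \eta_i(p) = 0 \qquad \text{for all } p \in \mathcal{E},
\]
where $\eta_i(p) = \mathbb{E}_p(F_i)$ are the expectation parameters.

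Next I would invoke the fact, already quoted in the paragraph immediately following the lemma, that $\eta = (\eta_1,\dots,\eta_n):\mathcal{E} \to \mathbb{R}^n$ is a diffeomorphism onto the topological interior $\Delta^\circ$ of the convex hull $\Delta \subset \mathbb{R}^n$ of $F(\Omega)$. Since the linear independence assumption on $\{1,F_1,\dots,F_n\}$ forces $F(\Omega)$ to span an $n$-dimensional affine subspace (otherwise there would be a non-trivial linear combination $\sum a_i F_i$ equal to a constant, contradicting linear independence), the set $\Delta$ has nonempty interior, so $\Delta^\circ$ is a nonempty open subset of $\mathbb{R}^n$. Therefore the affine function $y \mapsto c_0 + \sum_{i=1}^n c_i y_i$ vanishes on the open set $\Delta^\circ \subset \mathbb{R}^n$.

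From this the conclusion is immediate: an affine function on $\mathbb{R}^n$ that vanishes on a nonempty open set must be identically zero, hence $c_1 = \cdots = c_n = 0$ and $c_0 = 0$, which gives $X = 0$. The main (really the only) conceptual point is invoking the diffeomorphism $\eta:\mathcal{E} \xrightarrow{\sim} \Delta^\circ$ to ensure the image in $\mathbb{R}^n$ is genuinely $n$-dimensional; everything else is linear algebra. I would simply reference the result in \cite{Martin} cited by the author rather than reproving it.
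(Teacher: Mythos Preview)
Your proof is correct and follows essentially the same route as the paper's: write $X$ in the basis $\{1,F_1,\dots,F_n\}$, use that $\eta(\mathcal{E})=\Delta^\circ$ is a nonempty open subset of $\mathbb{R}^n$, and conclude that the resulting affine function must vanish identically. The paper spells out the last step by differentiating along a small segment in $\Delta^\circ$, whereas you simply invoke the standard fact that an affine function vanishing on an open set is zero; your extra remark that linear independence of $\{1,F_1,\dots,F_n\}$ forces $\Delta^\circ\neq\emptyset$ is a nice clarification the paper leaves implicit.
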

\begin{proof}
	Let $X\in \mathcal{A}_{\mathcal{E}}$ be arbitrary. Since $\mathcal{A}_{\mathcal{E}}$ is spanned by $F_{1},...,F_{n},1$, 
	there are real numbers $a_{0},...,a_{n}$ such that $X=a_{0}+\sum_{i=1}^{n}a_{i}F_{i}$, and thus
	\begin{eqnarray*}
		\alpha_{p}(X)=a_{0}+\sum_{i=1}^{n}a_{i}\mathbb{E}_{p}(F_{i})=a_{0}+\sum_{i=1}^{n}a_{i}\eta_{i}(p)=
		a_{0}+\langle a,\eta(p)\rangle
	\end{eqnarray*}
	for all $p\in \mathcal{E}$, where $a:=(a_{1},...,a_{n})\in \mathbb{R}^{n}$ and 
	$\langle\,,\,\rangle$ is the Euclidean pairing in $\mathbb{R}^{n}$. 
	Now assume that $\alpha_{p}(X)=0$ for all $p\in \mathcal{E}$. Then 
	\begin{eqnarray}\label{fnewnjrkenjnfj}
		\langle a,u\rangle=-a_{0}
	\end{eqnarray}
	for all $u$ in $\eta(\mathcal{E})=\Delta^{\circ}$. Let $u_{0}\in \Delta^{\circ}$ and $v\in \mathbb{R}^{n}$ 
	be arbitrary. By continuity, $u_{0}+tv$ is in $\Delta^{\circ}$ for all $t$ in a 
	neighborhood of $0$ in $\mathbb{R}$. Using \eqref{fnewnjrkenjnfj} with $u_{0}+tv$ 
	in place of $u$, and taking the derivative with respect to $t$ at $t=0$, we obtain 
	$\langle a, v\rangle=0$ for all $v\in \mathbb{R}^{n}$. Thus $a=0.$ If $a=0$, then \eqref{fnewnjrkenjnfj} implies that $a_{0}=0.$ 
	The lemma follows. 
\end{proof}

\begin{example}\label{neknkenfkn}
	Recall that $\mathbb{P}_{n}(1)$, equipped with the torus action \eqref{neknknknknfdkn}, is a regular torification of 
	$\mathcal{P}_{n+1}^{\times}$. A momentum map is given by $\moment([z_{1},...,z_{n+1}])=
	-\tfrac{4\pi}{\langle z,z\rangle}(|z_{1}|^{2},...,|z_{n}|^{2})$, where $\langle\,,\,\rangle$ is the Hermitian 
	product in $\mathbb{C}^{n+1}$ (linear in the second entry). Let $K=K_{\textup{can}}:\mathbb{P}_{n}(1)\to \mathcal{P}_{n+1}$, 
	be defined by $K([z])(x_{k})=\tfrac{|z_{k}|^{2}}{\langle z,z\rangle}$ (it is the natural extension of some compatible projection). 
	Then 
	\begin{eqnarray*}
		\moment([z])&=& -4\pi \big(K([z])(x_{1}),...,K([z])(x_{n})\big)\\
		&=& -4\pi \big(\mathbb{E}_{K([z])}(\delta_{x_{1}}),...,\mathbb{E}_{K([z])}(\delta_{x_{n}})\big), 
	\end{eqnarray*}
	where $\delta_{x_{j}}:\Omega\to \mathbb{R}$ is defined by $\delta_{x_{j}}(x_{i})=1$ if $j=i$, 0 otherwise. Therefore 
	the $X_{j}$'s are given by $X_{j}=-4\pi \delta_{x_{j}}$. 
\end{example}

\begin{example}\label{nfekwnknfekkn}
	Recall that $\mathbb{P}_{1}(\tfrac{1}{n})$, equipped with the torus action \eqref{neknknknknfdkn}, 
	is a regular torification of $\mathcal{B}(n)$. A momentum map is given by 
	$\moment([z_{1},z_{2}])=-4\pi n \tfrac{|z_{1}|^{2}}{\langle z,z\rangle}$. 
	Let $K:\mathbb{P}_{1}(\tfrac{1}{n})\to \overline{\mathcal{B}_{n}}$, be the natural extension of the compatible projection
	$\kappa([z])(k) =\binom{n}{k}\tfrac{|z_{1}^{k}z_{2}^{n-k}|^{2}}{(|z_{1}|^{2}+|z_{2}|^{2})^{n}}$, where 
	$[z]=[z_{1},z_{2}]\in \mathbb{P}_{1}(\tfrac{1}{n})^{\circ}$ and $k\in \Omega=\{0,1,...,n\}$. Clearly, 
	$K([z])(k)= \binom{n}{k}p^{k}(1-p)^{n-k}$, where 
	$p=\tfrac{|z_{1}|^{2}}{(|z_{1}|^{2}+|z_{2}|^{2})}$. Thus $K([z])$ is a binomial distribution 
	with parameter $p$. In particular, the mean of the identity function $\textup{Id}$ on $\Omega$ with respect to 
	$K([z])$ is given by 
	\begin{eqnarray*}
		\mathbb{E}_{K([z])}(\textup{Id})=np=\dfrac{n|z_{1}|^{2}}{(|z_{1}|^{2}+|z_{2}|^{2})}=
		\dfrac{n|z_{1}|^{2}}{\langle z,z\rangle}=-\dfrac{1}{4\pi}\moment([z]).
	\end{eqnarray*}
	It follows that $\moment([z])=\mathbb{E}_{K([z])}(X)$, where $X=-4\pi \textup{Id}$. 
\end{example}

	For our purposes, it is convenient to reformulate Lemma \ref{newnkfenkwnk} in the form:
\begin{proposition}\label{nfeknekfnkn}
	Given a compatible projection $\kappa:N^{\circ}\to \mathcal{E}$, with natural extension $K$, there is a unique linear bijection 
	$Z:\mathbb{R}^{n}\oplus \mathbb{R}\to \mathcal{A}_{\mathcal{E}}$ such that 
	\begin{eqnarray}\label{jnekjneknfknk}
		\moment^{\xi}(p)+r= \mathbb{E}_{K(p)}\big(Z(\xi,r)\big)
	\end{eqnarray}
	for all $p\in N$, $\xi\in \mathbb{R}^{n}$ and $r\in \mathbb{R}$. 
	If $X_{1},...,X_{n}$ are the random variables described in Lemma \ref{nfeknkrkefnk}, then 
	$Z(\xi,r)=\sum_{j=1}^{n}\xi_{j}X_{j}+r$.
\end{proposition}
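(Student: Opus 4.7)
The plan is to construct $Z$ explicitly from the random variables $X_1,\dots,X_n$ supplied by Lemma \ref{nfeknkrkefnk}, and then verify that it has all the required properties (linearity, validity of the identity, range in $\mathcal{A}_{\mathcal{E}}$, bijectivity, uniqueness). Specifically, I would set $Z(\xi,r):=\sum_{j=1}^{n}\xi_{j}X_{j}+r$. Since each $X_{j}$ is in $\mathcal{A}_{\mathcal{E}}$ and the constant function $1$ also belongs to $\mathcal{A}_{\mathcal{E}}$ (by definition $\mathcal{A}_{\mathcal{E}}$ is spanned by $F_{1},\dots,F_{n},1$), the image of $Z$ is contained in $\mathcal{A}_{\mathcal{E}}$, and linearity in $(\xi,r)$ is immediate.

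To establish the identity \eqref{jnekjneknfknk}, I would just compute: using the linearity of expectation and Lemma \ref{nfeknkrkefnk},
\begin{equation*}
\mathbb{E}_{K(p)}\!\big(Z(\xi,r)\big)=\sum_{j=1}^{n}\xi_{j}\mathbb{E}_{K(p)}(X_{j})+r=\langle \moment(p),\xi\rangle+r=\moment^{\xi}(p)+r.
\end{equation*}

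Next I would verify that $Z$ is a bijection. The domain $\mathbb{R}^{n}\oplus\mathbb{R}$ and the codomain $\mathcal{A}_{\mathcal{E}}$ both have dimension $n+1$, so it suffices to prove injectivity. Suppose $Z(\xi,r)=0$. Then $\mathbb{E}_{K(p)}(Z(\xi,r))=0$ for all $p\in N$, so by the identity just proved, $\moment^{\xi}+r\equiv 0$ on $N$. Differentiating yields $d\moment^{\xi}=0$, i.e.\ $\omega(\xi_{N},\cdot)=0$, so $\xi_{N}\equiv 0$; since the torus action is effective this forces $\xi=0$ (apply Lemma \ref{wkjkjkefjwkjk} or argue directly), and then $r=0$ as well. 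Alternatively, one could invoke linear independence of $X_{1},\dots,X_{n},1$ in $\mathcal{A}_{\mathcal{E}}$: the $X_{j}$ are linearly independent by Lemma \ref{nfeknkrkefnk}, and the constant function $1$ is not in their span since $\sum_{j}\xi_{j}X_{j}=-r$ constant would again force $\moment^{\xi}$ to be constant, hence $\xi=0$ by the argument above.

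Finally, for uniqueness, suppose $Z':\mathbb{R}^{n}\oplus\mathbb{R}\to\mathcal{A}_{\mathcal{E}}$ is another linear bijection satisfying \eqref{jnekjneknfknk}. Then for every $(\xi,r)$ and every $p\in N^{\circ}$,
\begin{equation*}
\mathbb{E}_{\kappa(p)}\!\big(Z(\xi,r)-Z'(\xi,r)\big)=\big(\moment^{\xi}(p)+r\big)-\big(\moment^{\xi}(p)+r\big)=0,
\end{equation*}
since $K|_{N^{\circ}}=\kappa$. As $\kappa(N^{\circ})=\mathcal{E}$, this says $\alpha_{p}\!\big(Z(\xi,r)-Z'(\xi,r)\big)=0$ for every $p\in\mathcal{E}$, where $\alpha_{p}$ is the evaluation functional of Lemma \ref{newnkfenkwnk}. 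That lemma then gives $Z(\xi,r)=Z'(\xi,r)$, proving uniqueness. I do not anticipate a serious obstacle: the proposition is essentially a coordinate-free rephrasing of Lemma \ref{nfeknkrkefnk}, with the constant summand $r$ absorbed by including the constant function $1$ in $\mathcal{A}_{\mathcal{E}}$; the only mild care required is the uniqueness step, which is where Lemma \ref{newnkfenkwnk} is needed to pass from equality of expectations under $K(p)$ to equality of random variables.
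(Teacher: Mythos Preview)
Your proposal is correct and follows the same approach the paper intends: the paper presents this proposition as a direct reformulation of Lemma~\ref{nfeknkrkefnk} (together with Lemma~\ref{newnkfenkwnk} for uniqueness) and does not give a separate detailed proof, so your filling-in of the details---defining $Z(\xi,r)=\sum_j\xi_jX_j+r$, verifying \eqref{jnekjneknfknk} by linearity of expectation, and invoking Lemma~\ref{newnkfenkwnk} for uniqueness---is exactly what is expected. A slightly cleaner route to bijectivity is to note directly that $\{X_1,\dots,X_n,1\}$ is a basis of $\mathcal{A}_{\mathcal{E}}$ (since $X_j=-\sum_k a_{jk}F_k+C_j$ with $A$ invertible and $\{F_1,\dots,F_n,1\}$ linearly independent), but your symplectic argument via effectiveness is equally valid.
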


\begin{definition}
	We shall call the linear bijection $Z:\mathbb{R}^{n}\oplus \mathbb{R}\to \mathcal{A}_{\mathcal{E}}$ of 
	Proposition \ref{nfeknekfnkn} the \textit{kernel of} $\moment$ \textit{relative to} $K$.
\end{definition}

\begin{example}\label{nfewnkefnkwnken}
	Suppose $N=\mathbb{P}_{n}(1)$ and $\mathcal{E}=\mathcal{P}_{n+1}^{\times}$. In this case, 
	the kernel of $\moment([z])=-\tfrac{4\pi}{\langle z,z\rangle}(|z_{1}|^{2},...,|z_{n}|^{2})$ relative to 
	$K([z])(x_{k})=\tfrac{|z_{k}|^{2}}{\langle z,z\rangle}$ is given by 
	\begin{eqnarray*}
		Z(\xi,r)(x_{k})=\left\lbrace
		\begin{array}{lll}
			-4\pi\xi_{k}+r  & \textup{if}  & k<n+1, \\
			r               & \textup{if}  & k=n+1,
		\end{array}
		\right.
	\end{eqnarray*}
	where $(\xi,r)\in \mathbb{R}^{n}\oplus\mathbb{R}$ and $x_{k}\in \{x_{1},...,x_{n+1}\}$. 
\end{example}

\begin{example}\label{jkwwjefjwkdjk}
	Suppose $N=\mathbb{P}_{1}(\tfrac{1}{n})$ and $\mathcal{E}=\mathcal{B}(n)$. In this case, 
	the kernel of $\moment([z_{1},z_{2}])=-4\pi n \tfrac{|z_{1}|^{2}}{\langle z,z\rangle}$ relative to 
	$K([z_{1},z_{2}])(k) =\binom{n}{k}\tfrac{|z_{1}^{k}z_{2}^{n-k}|^{2}}{(|z_{1}|^{2}+|z_{2}|^{2})^{n}}$ is given by 
	\begin{eqnarray*}
		Z(\xi,r)(k)=-4\pi \xi k+r,	
	\end{eqnarray*}
	where $(\xi,r)\in \mathbb{R}\oplus\mathbb{R}$ and $k\in \{0,1,...,n\}$. 
\end{example}

	Consider the group action of $\mathbb{S}_{m}$ on the space of real-valued functions $h$ on $\Omega$ given by 
	$(\sigma\cdot h)(x_{k})=h(x_{\sigma^{-1}(k)})$. It is easy to see that 
	\begin{eqnarray*}
		\mathbb{E}_{\sigma\cdot p}(X)=\mathbb{E}_{p}(\sigma^{-1}\cdot X)
	\end{eqnarray*}
	for all probability functions $p$ on $\Omega$ and all random variables $X:\Omega\to \mathbb{R}$. Using this and 
	Lemma \ref{newnkfenkwnk}, a straightforward computation shows the following result. 

\begin{lemma}\label{nefkwnefnknkn}
	Suppose that the estimator map $F$ is injective. 
	Let $Z$ and $Z'$ be the kernels of $\moment$ relative to $K$ and $K'$, respectively, and let 
	$\sigma\in \textup{Perm}(\mathcal{E})\subseteq \mathbb{S}_{m}$ be a permutation such that 
	$K'(p)=\sigma\cdot K(p)$ for all $p\in N$ (see Lemma \ref{nfeknkrenknfk}). 
	Then $Z(\xi,r)=\sigma^{-1}\cdot Z'(\xi,r)$ for all $(\xi,r)\in \mathbb{R}^{n}\oplus \mathbb{R}$. 
\end{lemma}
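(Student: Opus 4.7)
The plan is to chain together the defining equation \eqref{jnekjneknfknk} for both $Z$ and $Z'$, use the hypothesized relation $K'(p) = \sigma\cdot K(p)$ to transport expectations, and then invoke Lemma \ref{newnkfenkwnk} to conclude equality of the two elements in $\mathcal{A}_{\mathcal{E}}$. More concretely, fix $(\xi,r)\in\mathbb{R}^{n}\oplus\mathbb{R}$ and apply \eqref{jnekjneknfknk} to both kernels: for every $p\in N$,
\begin{equation*}
\mathbb{E}_{K(p)}\bigl(Z(\xi,r)\bigr) \;=\; \moment^{\xi}(p)+r \;=\; \mathbb{E}_{K'(p)}\bigl(Z'(\xi,r)\bigr).
\end{equation*}
Using the hypothesis $K'(p)=\sigma\cdot K(p)$ on the right-hand side, and the identity $\mathbb{E}_{\sigma\cdot q}(X)=\mathbb{E}_{q}(\sigma^{-1}\cdot X)$ recalled just above the lemma, the right-hand side becomes $\mathbb{E}_{K(p)}(\sigma^{-1}\cdot Z'(\xi,r))$.

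Next I would observe that under the injectivity hypothesis on $F$, property (P7) guarantees that $\mathcal{A}_{\mathcal{E}}$ is stable under the action of $\textup{Perm}(\mathcal{E})$, so $\sigma^{-1}\cdot Z'(\xi,r)$ genuinely lies in $\mathcal{A}_{\mathcal{E}}$. Thus both $Z(\xi,r)$ and $\sigma^{-1}\cdot Z'(\xi,r)$ are elements of $\mathcal{A}_{\mathcal{E}}$, and their expectations against $K(p)$ agree for every $p\in N$.

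To finish, restrict to $p\in N^{\circ}$, where $K(p)=\kappa(p)\in\mathcal{E}$ and where $\kappa$ is surjective onto $\mathcal{E}$ (by construction of a compatible projection as $\pi_{L}\circ F^{-1}$, combined with the surjectivity of $\pi_{L}$). This yields
\begin{equation*}
\alpha_{q}\bigl(Z(\xi,r)-\sigma^{-1}\cdot Z'(\xi,r)\bigr)=0 \qquad \textup{for every } q\in\mathcal{E},
\end{equation*}
with $\alpha_{q}$ as in Lemma \ref{newnkfenkwnk}. That lemma then forces $Z(\xi,r)-\sigma^{-1}\cdot Z'(\xi,r)=0$, which is the desired identity.

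I do not anticipate a serious obstacle here: the argument is essentially bookkeeping once one notices that the two sides of the sought-after equality both belong to the finite-dimensional space $\mathcal{A}_{\mathcal{E}}$ on which the expectation functionals separate points. The only point that requires attention is confirming that $\sigma\in\textup{Perm}(\mathcal{E})$ really acts on $\mathcal{A}_{\mathcal{E}}$ (hence (P7) and the injectivity of $F$ are used) and that $\kappa$ surjects onto $\mathcal{E}$ so that the separation-of-points lemma applies to every $q\in\mathcal{E}$.
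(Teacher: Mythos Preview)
Your argument is correct and matches the paper's own approach: the paper simply remarks that the result follows from the identity $\mathbb{E}_{\sigma\cdot p}(X)=\mathbb{E}_{p}(\sigma^{-1}\cdot X)$ together with Lemma~\ref{newnkfenkwnk}, and what you have written is exactly that straightforward computation spelled out in full, including the use of (P7) to keep $\sigma^{-1}\cdot Z'(\xi,r)$ inside $\mathcal{A}_{\mathcal{E}}$ and the surjectivity of $\kappa$ onto $\mathcal{E}$ to invoke the separation-of-points lemma.
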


	Therefore, if $F$ is injective, then the images of $Z(\xi,r)$ and $Z'(\xi,r)$ coincide for all 
	$(\xi,r)\in \mathbb{R}^{n}\oplus \mathbb{R}$. This fact will be important in the definition of the spectrum of 
	a K\"{a}hler function (Definition \ref{nfeknkenkenkf}).

\subsection{Weyl group of $\textup{Aut}(N,g)^{0}$ and equivariance of $Z$.}\label{nekenrkefnknk}

	We now discuss symmetry properties. Throughout this section, we assume that the 
	estimator map $F:\Omega\to \mathbb{R}^{n}$ is injective, or equivalently that 
	$\mathcal{A}_{\mathcal{E}}$ separates the points of $\Omega$ (see Remark \ref{nemwwnkefnkwnk}). Thus, by (P6), the group 
	$\textup{Diff}(\mathcal{E},h_{F},\nabla^{(e)})$ of affine isometries of $\mathcal{E}$ 
	coincides with the group $\textup{Perm}(\mathcal{E})$ of permutations of $\mathcal{E}$ 
	(see Section \ref{nkennkfenknk}). Recall that every element of $\textup{Perm}(\mathcal{E})$ is of the form 
	$T_{\sigma}$ for some $\sigma\in \mathbb{S}_{m}$, where $T_{\sigma}:\mathcal{E}\to \mathcal{E}$ 
	is defined by $T_{\sigma}(p)(x_{k})=p(x_{\sigma(k)}).$ 
	Since $F$ is injective, the map $\textup{Perm}(\mathcal{E})\to \mathbb{S}_{m}$, $T_{\sigma}\to \sigma^{-1}$ 
	defines a group isomorphism from $\textup{Perm}(\mathcal{E})$ to a subgroup of $\mathbb{S}_{m}$. Because of this, we will often 
	regard $\textup{Perm}(\mathcal{E})$ as a subgroup of $\mathbb{S}_{m}$. 
	
	Recall that $T=\Phi(\mathbb{T}^{n})$ is a maximal torus and a Cartan subgroup of $\textup{Aut}(N,g)$ (see Section \ref{nkennkfenknk}). 
	Let $W^{S}$ be the Weyl group (in the sense of Segal) of $\textup{Aut}(N,g)$ associated to $T$. 
	Let $\pi=\kappa\circ \tau$ be a toric factorization. By (P4) and (P6), 
	the map 
	\begin{eqnarray}\label{nfnfkenknknk}
		\textup{Perm}(\mathcal{E})\to W^{S}, \,\,\,\,\,\,\,\, 
		\sigma\mapsto [\textup{lift}_{\tau}(T_{\sigma^{-1}})],
	\end{eqnarray}
	is a group isomorphism, where $[\textup{lift}_{\tau}(T_{\sigma^{-1}})]$ 
	denotes the equivalence class of $\textup{lift}_{\tau}(T_{\sigma^{-1}})\in N(T)
	\subset \textup{Aut}(N,g)$ in $W^{S}$. Let $W$ be the Weyl group (in the classical sense) 
	of $\textup{Aut}(N,g)^{0}$ associated to $T$. Then $W$ is naturally a subgroup of $W^{S}$. 
	Moreover, if $\textup{Perm}(\mathcal{E})^{0}$ denotes the subgroup of $\textup{Perm}(\mathcal{E})$ 
	consisting of elements $\psi\in \textup{Perm}(\mathcal{E})$ satisfying $\textup{lift}_{\tau}(\psi)\in \textup{Aut}(N,g)^{0}$, 
	then \eqref{nfnfkenknknk} restricts to a group isomorphism $\textup{Perm}(\mathcal{E})^{0}\to W$, making the following 
	diagram commutative:
\begin{eqnarray}\label{knkdfnknkndknk}
	\begin{tikzcd}
		\textup{Perm}(\mathcal{E})^{0} \arrow{r}{\displaystyle \cong } \arrow{d}{\displaystyle } &  W \arrow{d}{\displaystyle } \\
		 \textup{Perm}(\mathcal{E}) \arrow[swap]{r}{\displaystyle \cong}  &  W^{S}
	\end{tikzcd}
\end{eqnarray}
	where vertical arrows are inclusion maps. 

	Let $Z:\mathbb{R}^{n}\oplus \mathbb{R}\to \mathcal{A}_{\mathcal{E}}$ be the kernel of $\moment$
	relative to $K:N\to \overline{\mathcal{E}}$. Recall that $W$ acts linearly on $\mathbb{R}^{n}\oplus \mathbb{R}$ 
	via the formula $w\cdot (\xi,r)=(w\cdot \xi,r+\sigma_{\moment}(w)(\xi))$, where $\sigma_{\moment}:W\to (\mathbb{R}^{n})^{*}$ 
	is the one-cocycle of $\moment$. Since $Z$ is a linear bijection, there is a unique linear action of $W$ on 
	$\mathcal{A}_{\mathcal{E}}$ such that 
	\begin{eqnarray}\label{ndefwnkefndknkenk}
		Z\big(w\cdot (\xi,r)\big)=w\cdot Z(\xi,r)
	\end{eqnarray}
	for all $w\in W$ and $(\xi,r)\in \mathbb{R}^{n}\oplus \mathbb{R}$. 

\begin{lemma}\label{ncdknknfeknk}
	The action of the Weyl group $W\subseteq \mathbb{S}_{m}$ on $\mathcal{A}_{\mathcal{E}}$ is given by:
	\begin{eqnarray*}
		(\sigma\cdot f)(x_{k})=f(x_{\sigma^{-1}(k)}),
	\end{eqnarray*}
	where $\sigma\in W$, $f\in \mathcal{A}_{\mathcal{E}}$ and $x_{k}\in \Omega$. 
\end{lemma}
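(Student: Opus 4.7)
The plan is to transfer the action of $W$ on $\mathcal{A}_{\mathcal{E}}$ to a concrete computation on $\overline{\mathcal{E}}$ using the identification of $W$ with $\textup{Perm}(\mathcal{E})^{0}\subseteq \mathbb{S}_{m}$ provided by diagram \eqref{knkdfnknkndknk}, and then pin down the permutation via the injectivity statement of Lemma \ref{newnkfenkwnk}. Fix a toric factorization $\pi=\kappa\circ \tau$ and let $K$ be the natural extension of $\kappa$. Given $w\in W$, pick $\varphi\in N(T)\cap \textup{Aut}(N,g)^{0}$ with $[\varphi]=w$. By (P4), (P6) and \eqref{knkdfnknkndknk}, there is a unique $\psi\in \textup{Perm}(\mathcal{E})^{0}$ with $\varphi=\textup{lift}_{\tau}(\psi)$; write $\psi=T_{\alpha}$ for $\alpha\in \mathbb{S}_{m}$, so that the image of $w$ in $\mathbb{S}_{m}$ under the inclusion $\textup{Perm}(\mathcal{E})\hookrightarrow \mathbb{S}_{m}$, $T_{\sigma}\mapsto \sigma^{-1}$, is $\alpha^{-1}$.

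Next, I would transport the permutation from $\mathcal{E}$ to $\overline{\mathcal{E}}$ via $K$. The definition of lift together with diagram \eqref{knkdfnknkndknk} gives $\kappa\circ \varphi=T_{\alpha}\circ \kappa$ on $N^{\circ}$. Since $T_{\alpha}(p)(x_{k})=p(x_{\alpha(k)})$ extends continuously to $\mathcal{P}_{m}$ by the same formula, and $N^{\circ}$ is dense in $N$, continuity of $K$ and $K\circ \varphi$ yields
\begin{eqnarray*}
	K(\varphi^{-1}(p))(x_{k})=K(p)(x_{\alpha^{-1}(k)})\qquad\text{for all }p\in N,\ k\in\{1,\dots,m\}.
\end{eqnarray*}

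The core of the argument is then a short calculation combining the cocycle identity, Proposition \ref{nfeknekfnkn} and the relation above. By Definition \ref{neknwkenknk} and the definition of the $W$-action on $\mathbb{R}^{n}\oplus\mathbb{R}$,
\begin{eqnarray*}
	\mathbb{E}_{K(p)}\bigl(Z(w\cdot(\xi,r))\bigr)
	=\moment^{w\cdot \xi}(p)+r+\sigma_{\moment}(w)(\xi)
	=\moment^{\xi}(\varphi^{-1}(p))+r
	=\mathbb{E}_{K(\varphi^{-1}(p))}\bigl(Z(\xi,r)\bigr).
\end{eqnarray*}
Expanding the rightmost expectation and reindexing by $j=\alpha^{-1}(k)$,
\begin{eqnarray*}
	\mathbb{E}_{K(\varphi^{-1}(p))}\bigl(Z(\xi,r)\bigr)
	=\sum_{k}K(p)(x_{\alpha^{-1}(k)})\,Z(\xi,r)(x_{k})
	=\sum_{j}K(p)(x_{j})\,Z(\xi,r)(x_{\alpha(j)})
	=\mathbb{E}_{K(p)}(h),
\end{eqnarray*}
where $h(x_{j}):=Z(\xi,r)(x_{\alpha(j)})$ is the image of $Z(\xi,r)$ under the permutation action of $\alpha^{-1}$ in the sense of (P7); note $h\in \mathcal{A}_{\mathcal{E}}$ by (P7). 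Restricting to $p\in N^{\circ}$ (so that $K(p)\in \mathcal{E}$) and invoking Lemma \ref{newnkfenkwnk}, which says that the linear forms $\alpha_{p}$ separate points of $\mathcal{A}_{\mathcal{E}}$, we conclude $Z(w\cdot(\xi,r))=\alpha^{-1}\cdot Z(\xi,r)$. Since $\alpha^{-1}$ is precisely the image of $w$ in $\mathbb{S}_{m}$, this is the claimed formula.

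The main obstacle I anticipate is bookkeeping rather than substance: keeping the three conventions straight, namely (i) the direction of the lift relation $\kappa\circ \varphi=\psi\circ \kappa$, (ii) the appearance of $\varphi^{-1}$ rather than $\varphi$ forced by the sign in Definition \ref{neknwkenknk}, and (iii) the inversion in the embedding $\textup{Perm}(\mathcal{E})\hookrightarrow \mathbb{S}_{m}$, $T_{\sigma}\mapsto \sigma^{-1}$. Once these are aligned, the final passage from an identity of expectations to an identity of functions in $\mathcal{A}_{\mathcal{E}}$ is exactly what Lemma \ref{newnkfenkwnk} is designed to provide.
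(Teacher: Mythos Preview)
Your proof is correct and follows essentially the same route as the paper's: both use the cocycle identity to rewrite $\mathbb{E}_{K(p)}(Z(w\cdot(\xi,r)))$ as $\mathbb{E}_{K(\varphi^{-1}(p))}(Z(\xi,r))$, then push $\varphi^{-1}$ through $K$ via the lift relation to obtain a permuted expectation, and finally invoke Lemma \ref{newnkfenkwnk} and (P7) to conclude. One small wording issue: after picking an arbitrary representative $\varphi$ of $w$, (P4) only guarantees $[\varphi]=[\textup{lift}_{\tau}(\psi)]$, not $\varphi=\textup{lift}_{\tau}(\psi)$; but since both $\moment^{\xi}$ and $K$ are $T$-invariant, you may simply take $\varphi:=\textup{lift}_{\tau}(\psi)$ from the outset, and the rest goes through unchanged (the paper makes the same harmless elision).
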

\begin{proof}
	Let $w=[\phi]\in W$ be arbitrary, where $\phi\in N(T)\subseteq \textup{Aut}(N,g)^{0}$. 
	Suppose that $p\in N$ and $(\xi,r)\in \mathbb{R}^{n}\oplus \mathbb{R}$ are arbitrary. 
	Using \eqref{nekdnkfnnkenefkkn} and the fact that $Z$ is a kernel, we compute:
	\begin{eqnarray}
		\mathbb{E}_{K(p)}\big(Z(w\cdot(\xi,r))\big) &=&  \mathbb{E}_{K(p)}\big(Z(w\cdot \xi,r+\sigma_{\moment}(w)(\xi))\big)\nonumber \\
		&=& \moment^{w\cdot \xi}(p)+r+\sigma_{\moment}(w)(\xi))\nonumber\\
		&=& \moment^{\xi}(\phi^{-1}(p))+r\nonumber\\
		&=& \mathbb{E}_{K(\phi^{-1})(p)}(Z(\xi,r)).\label{nkwnkefnwknk}
	\end{eqnarray}
	By the discussion above, there is a permutation $\sigma\in \mathbb{S}_{m}$ such that $\phi=\textup{lift}_{\tau}(T_{\sigma^{-1}})$ and 
	$T_{\sigma^{-1}}\in \textup{Perm}(\mathcal{E})^{0}$. By Proposition \ref{nfeknwkneknk}, the map 
	$\textup{Diff}(\mathcal{E},h_{F},\nabla^{(e)})\to \textup{Aut}(N,g)$, $\psi\mapsto \textup{lift}_{\tau}(\psi)$, is a 
	group homomorphism, and thus
	\begin{eqnarray*}
		\phi^{-1}=\big(\textup{lift}_{\tau}(T_{\sigma^{-1}})\big)^{-1}=\textup{lift}_{\tau}\big((T_{\sigma^{-1}})^{-1}\big)
		= \textup{lift}_{\tau}(T_{\sigma}). 
	\end{eqnarray*}
	It follows from this and the fact that $K$ is the natural extension of a compatible projection that 
	\begin{eqnarray*}
		K(\phi^{-1}(p)) =K\big(\textup{lift}_{\tau}(T_{\sigma})(p)) = (T_{\sigma}\circ K)(p).
	\end{eqnarray*}
	Thus
	\begin{eqnarray}
		\mathbb{E}_{K(\phi^{-1})(p)}(Z(\xi,r))&=& \mathbb{E}_{(T_{\sigma}\circ K)(p)}(Z(\xi,r))\nonumber\\
		&=& \sum_{k=1}^{m}Z(\xi,r)(x_{k})(T_{\sigma}\circ K)(p)(x_{k})\nonumber\\
		&=& \sum_{k=1}^{m}Z(\xi,r)(x_{k})K(p)(x_{\sigma(k)})\nonumber\\
		&=& \sum_{k=1}^{m}Z(\xi,r)(x_{\sigma^{-1}(k)})K(p)(x_{k})\nonumber\\
		&=& \mathbb{E}_{K(p)}(Z^{\sigma}(\xi,r)), \label{nfekdkenfknk}
	\end{eqnarray}
	where $Z^{\sigma}(\xi,r)$ is defined by $Z^{\sigma}(\xi,r)(x_{k})=Z(\xi,r)(x_{\sigma^{-1}(k)})$. 
	It follows from \eqref{nkwnkefnwknk} and \eqref{nfekdkenfknk} that 
	\begin{eqnarray}\label{nefknknfeknk}
		\mathbb{E}_{K(p)}\big(Z(w\cdot(\xi,r))\big)=\mathbb{E}_{K(p)}(Z^{\sigma}(\xi,r)).
	\end{eqnarray}
	By (P7), we have $Z^{\sigma}\in \mathcal{A}_{\mathcal{E}}$, which implies, in view of 
	\eqref{nefknknfeknk} and Lemma \ref{newnkfenkwnk}, that $Z(w\cdot(\xi,r))=Z^{\sigma}(\xi,r)$. This completes the proof.
\end{proof}

\subsection{Main result.} In this section, we state and prove the main result of this paper, and define the spectrum 
	of a K\"{a}hler function.

\begin{theorem}\label{newnknfkenkndvknfk}
	Suppose that $\Phi:\mathbb{T}^{n}\times N\to N$ is a K\"{a}hler toric manifold and a regular torification 
	of an exponential family $\mathcal{E}$ defined over a finite set $\Omega=\{x_{1},...,x_{m}\}$, as in 
	Setting \ref{mnekmnwknefknk}. Let $K:N\to \overline{\mathcal{E}}$ be the natural extension of some 
	compatible projection. Given a smooth function $f:N\to \mathbb{R}$, the following hold. 
	\begin{enumerate}[(a)]
		\item $f$ is K\"{a}hler if and only if there is a random variable $X\in \mathcal{A}_{\mathcal{E}}$ 
			and a holomorphic isometry $\phi\in \textup{Aut}(N,g)^{0}$ such that 
			$f(p)=\mathbb{E}_{K(\phi(p))}(X)$ for all $p\in N$. 
		\item Suppose that the estimator map $F:\Omega\to \mathbb{R}^{n}$ is injective. 
			If $f(p)=\mathbb{E}_{K(\phi(p))}(X)=\mathbb{E}_{K(\phi'(p))}(X')$ for all $p\in N$, where 
			$\phi,\phi'\in \textup{Aut}(N,g)^{0}$ and $X,X'\in \mathcal{A}_{\mathcal{E}}$, then 
			there is a permutation $\sigma\in \textup{Perm}(\mathcal{E})\subseteq \mathbb{S}_{m}$ such that 
			$X(x_{k})=X'(x_{\sigma(k)})$ for all $k\in \{1,...,m\}$. 
	\end{enumerate}
\end{theorem}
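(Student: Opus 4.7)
The plan is to combine Proposition \ref{nenekfnsddnekn}, which characterizes Kähler functions on any Kähler toric manifold in terms of the momentum map, with Proposition \ref{nfeknekfnkn}, which converts $\moment^{\xi}+r$ into the expectation of $Z(\xi,r)$ with respect to $K$. The torification hypothesis is exactly what makes these two languages interchangeable; part (a) becomes a direct translation, and part (b) reduces to tracking the $W$-action on $\mathcal{A}_{\mathcal{E}}$ carried across by the kernel $Z$.

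For part (a), I would first assume $f$ is Kähler and apply Proposition \ref{nenekfnsddnekn}(a) to obtain $\xi\in\mathbb{R}^{n}$, $\varphi\in\textup{Aut}(N,g)^{0}$ and $r\in\mathbb{R}$ with $f=\moment^{\xi}\circ\varphi+r$. Letting $Z$ be the kernel of $\moment$ relative to $K$, Proposition \ref{nfeknekfnkn} applied at $q=\varphi(p)$ gives $f(p)=\moment^{\xi}(\varphi(p))+r=\mathbb{E}_{K(\varphi(p))}(Z(\xi,r))$, so $X:=Z(\xi,r)\in\mathcal{A}_{\mathcal{E}}$ and $\phi:=\varphi$ provide the desired decomposition. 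Conversely, given $f(p)=\mathbb{E}_{K(\phi(p))}(X)$ with $X\in\mathcal{A}_{\mathcal{E}}$, the surjectivity of $Z$ lets me write $X=Z(\xi,r)$; the same identity then yields $f=\moment^{\xi}\circ\phi+r$, which is Kähler by Proposition \ref{nenekfnsddnekn}(a).

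For part (b), suppose $f(p)=\mathbb{E}_{K(\phi(p))}(X)=\mathbb{E}_{K(\phi'(p))}(X')$, and pick $(\xi,r)$, $(\xi',r')$ with $X=Z(\xi,r)$ and $X'=Z(\xi',r')$. Applying Proposition \ref{nfeknekfnkn} twice translates the assumption into $\moment^{\xi}\circ\phi+r=\moment^{\xi'}\circ\phi'+r'$ on $N$, and Proposition \ref{nenekfnsddnekn}(b) then produces $w\in W$ with $(\xi,r)=w\cdot(\xi',r')$, where $W$ acts on $\mathbb{R}^{n}\oplus\mathbb{R}$ via the cocycle-twisted formula \eqref{nceknfkfneknk}. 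The equivariance property \eqref{ndefwnkefndknkenk} of the kernel then gives $X=Z(w\cdot(\xi',r'))=w\cdot Z(\xi',r')=w\cdot X'$. Finally, I would invoke the isomorphism $W\cong\textup{Perm}(\mathcal{E})^{0}\subseteq\textup{Perm}(\mathcal{E})\subseteq\mathbb{S}_{m}$ from diagram \eqref{knkdfnknkndknk} to identify $w$ with a permutation $\tau$, and use Lemma \ref{ncdknknfeknk} to conclude $X(x_{k})=(w\cdot X')(x_{k})=X'(x_{\tau^{-1}(k)})$; setting $\sigma:=\tau^{-1}\in\textup{Perm}(\mathcal{E})$ completes the proof.

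I expect the main obstacle to be purely bookkeeping, in two respects: ensuring that the $W$-action on $\mathbb{R}^{n}\oplus\mathbb{R}$ used in Proposition \ref{nenekfnsddnekn}(b) is literally the twisted action of \eqref{nceknfkfneknk} for which the equivariance \eqref{ndefwnkefndknkenk} of $Z$ is stated (otherwise the identification $X=w\cdot X'$ would not be clean), and keeping careful track of whether permutations act by $\sigma$ or by $\sigma^{-1}$ when passing through the isomorphism \eqref{knkdfnknkndknk} and Lemma \ref{ncdknknfeknk}. Both verifications are already contained in the results cited, so no new techniques will be needed.
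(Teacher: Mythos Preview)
Your proposal is correct and follows essentially the same route as the paper: both parts hinge on translating between the momentum-map description of K\"ahler functions (Proposition \ref{nenekfnsddnekn}) and the expectation description via the kernel $Z$ (Proposition \ref{nfeknekfnkn}), and part (b) is completed exactly as you outline using the $W$-equivariance of $Z$ together with Lemma \ref{ncdknknfeknk}. Your explicit bookkeeping on $\sigma$ versus $\sigma^{-1}$ is slightly more careful than the paper's own proof, which simply asserts the existence of the required permutation.
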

\begin{proof}
	(a) This follows from Proposition \ref{nenekfnsddnekn} and Lemma \ref{nfeknekfnkn}. (b) Let
	$Z$ be the kernel of $\moment$ relative to $K$. 
	Write $X=Z(\xi,r)$ and $X'=Z(\xi',r')$, where $(\xi,r)$ and $(\xi',r')$ are in $\mathbb{R}^{n}\oplus \mathbb{R}$.
	Then
	\begin{eqnarray*}
		\mathbb{E}_{K(\phi(p))}(X)=\mathbb{E}_{K(\phi'(p))}(X') \,\,\,\,\,\,\,&\Rightarrow& \,\,\,\,\,\,\,
		\mathbb{E}_{K(\phi(p))}(Z(\xi,r))=\mathbb{E}_{K(\phi'(p))}(Z(\xi',r'))\\
		&\Rightarrow& \,\,\,\,\,\,\,\moment^{\xi}(\phi(p))+r=\moment^{\xi'}(\phi'(p))+r'.
	\end{eqnarray*}
	It follows from this and Proposition \ref{nenekfnsddnekn}(b) that 
	there is $w\in W$ such that $(\xi,r)=w\cdot (\xi',r')$, 
	where $W$ is the Weyl group of $\textup{Aut}(N,g)^{0}$ associated to the maximal torus $T=\Phi(\mathbb{T}^{n})$. 
	Since $Z$ is $W$-equivariant (see \eqref{ndefwnkefndknkenk}), we get 
	\begin{eqnarray*}
		X=Z(\xi,r)=Z(w\cdot (\xi',r'))=w\cdot Z(X',r')=w\cdot X'.
	\end{eqnarray*}
	Thus $X=w\cdot X'$. Since the estimator map is injective, $W$ acts on $\mathcal{A}_{\mathcal{E}}$ by permutations 
	(see Lemma \ref{ncdknknfeknk}). This means that there is $\sigma\in \mathbb{S}_{m}$ such that $(w\cdot X')(x_{k})=X'(x_{\sigma(k)})$ 
	for all $k\in \{1,...,m\}$. Therefore $X(x_{k})=X'(x_{\sigma(k)})$ for all $k\in \{1,...,m\}$, which completes the proof.
\end{proof}

\begin{corollary}
	Suppose that the estimator map $F$ is injective. Let $f$ be a K\"{a}hler function on $N$ such that 
	$f(p)=\mathbb{E}_{K(\phi(p))}(X)=\mathbb{E}_{K'(\phi'(p))}(X')$ for all $p\in N$, 
	where $\phi,\phi'\in \textup{Aut}(N,g)^{0}$, $X,X'\in \mathcal{A}_{\mathcal{E}}$, and where $K$ and $K'$ are 
	the natural extensions of some compatible projections. Then the images of $X$ and $X'$ coincide. 
\end{corollary}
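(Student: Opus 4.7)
The plan is to reduce the statement to the special case already handled by part (b) of Theorem \ref{newnknfkenkndvknfk}, in which the two compatible projections coincide. First I would invoke Lemma \ref{nfeknkrenknfk}: since $F$ is injective, there is a permutation $\sigma\in\textup{Perm}(\mathcal{E})\subseteq\mathbb{S}_{m}$ such that $K(p)(x_{k})=K'(p)(x_{\sigma(k)})$ for all $p\in N$ and all $k$. I would use this identity to rewrite the expectation of $X'$ with respect to $K'(\phi'(p))$ as an expectation with respect to $K(\phi'(p))$: a straightforward index substitution yields
\begin{eqnarray*}
\mathbb{E}_{K'(\phi'(p))}(X')=\sum_{k=1}^{m}X'(x_{k})K(\phi'(p))(x_{\sigma^{-1}(k)})=\mathbb{E}_{K(\phi'(p))}(X''),
\end{eqnarray*}
where $X''$ is the random variable defined by $X''(x_{j})=X'(x_{\sigma(j)})$.

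The next step is to check that $X''$ lies in $\mathcal{A}_{\mathcal{E}}$, so that Theorem \ref{newnknfkenkndvknfk}(b) can be applied. This is exactly property (P7): since $F$ is injective, $\textup{Perm}(\mathcal{E})$ acts linearly on $\mathcal{A}_{\mathcal{E}}$ via $(\tau\cdot Y)(x_{k})=Y(x_{\tau^{-1}(k)})$, and $X''=\sigma^{-1}\cdot X'$ under this action, which stays in $\mathcal{A}_{\mathcal{E}}$. (Alternatively, one could appeal to Lemma \ref{nefkwnefnknkn}, which says precisely that the kernels of $\moment$ relative to $K$ and $K'$ differ by this same permutation.)

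Having rewritten everything with the single projection $K$, I would now have
\begin{eqnarray*}
f(p)=\mathbb{E}_{K(\phi(p))}(X)=\mathbb{E}_{K(\phi'(p))}(X'')
\end{eqnarray*}
for all $p\in N$, with $X,X''\in\mathcal{A}_{\mathcal{E}}$ and $\phi,\phi'\in\textup{Aut}(N,g)^{0}$. Theorem \ref{newnknfkenkndvknfk}(b) then produces a permutation $\rho\in\textup{Perm}(\mathcal{E})\subseteq\mathbb{S}_{m}$ with $X(x_{k})=X''(x_{\rho(k)})$ for every $k$. Since the image of $X''$ is by definition just a reordering of the image of $X'$, I conclude
\begin{eqnarray*}
X(\Omega)=X''(\Omega)=X'(\Omega),
\end{eqnarray*}
which is the desired equality of images.

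The only substantive obstacle I anticipate is the bookkeeping around indices in the translation between $K$ and $K'$ (keeping careful track of $\sigma$ versus $\sigma^{-1}$), together with the verification that the resulting $X''$ remains in $\mathcal{A}_{\mathcal{E}}$; this latter point is where the injectivity hypothesis on $F$ is essential, via (P7). Once those pieces are in place, the corollary reduces cleanly to the case already settled in Theorem \ref{newnknfkenkndvknfk}(b).
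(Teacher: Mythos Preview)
Your proposal is correct and follows essentially the same route as the paper. The paper's proof simply cites Lemmas \ref{nfeknkrenknfk} and \ref{nefkwnefnknkn} together with Theorem \ref{newnknfkenkndvknfk}(b); your argument unpacks exactly what those citations amount to, performing explicitly the index substitution that converts $\mathbb{E}_{K'(\phi'(p))}(X')$ into $\mathbb{E}_{K(\phi'(p))}(X'')$ with $X''=\sigma^{-1}\cdot X'\in\mathcal{A}_{\mathcal{E}}$, and then reducing to the single-projection case.
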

\begin{proof}
	This follows from Lemmas \ref{nfeknkrenknfk} and \ref{nefkwnefnknkn}, and Theorem \ref{newnknfkenkndvknfk}(b).
\end{proof}

\begin{definition}[\textbf{Spectrum of a K\"{a}hler function}]\label{nfeknkenkenkf}
	Suppose that the estimator map $F$ is injective. 
	Let $f(p)=\mathbb{E}_{K(\phi(p))}(X)$ be a K\"{a}hler function on $N$, where 
	$X\in \mathcal{A}_{\mathcal{E}}$, $\phi\in \textup{Aut}(N,g)^{0}$ and 
	$K$ is the natural extension of some compatible projection. 
	We shall call the image of $X$ the \textit{spectrum} of $f$, and denote it by $\textup{spec}(f)$. Thus
	\begin{eqnarray*}
		\textup{spec}(f):=\{X(x_{1}),...,X(x_{m})\}. 
	\end{eqnarray*}
\end{definition}

	We now focus our attention on two concrete examples. Let $\textup{Herm}_{n}$ denote the space 
	of Hermitian matrices of size $n$, and let $ \mathscr{K}(\mathbb{P}_{n}(1))$ be the space of 
	K\"{a}hler functions on $\mathbb{P}_{n}(1)$. Recall that the map 
	$\textup{Herm}_{n+1}\to \mathscr{K}(\mathbb{P}_{n}(1))$, $H\mapsto f_{H}$, is a 
	linear bijection, where $f_{H}([z]):= \tfrac{\langle z,Hz\rangle}{\langle z,z\rangle}.$ Below, we use 
	$\textup{spec}(H)$ to denote the spectrum (in the classical sense) of $H\in \textup{Herm}_{n+1}$.

\begin{proposition}\label{neeknereknfk}
	Let $\mathbb{P}_{n}(1)$ be the complex projective space of holomorphic sectional curvature $c=1$, 
	regarded as a regular torification of $\mathcal{P}_{n+1}^{\times}$. 
	Then, for every $H\in \textup{Herm}_{n+1}$, we have $\textup{spec}(f_{H})=\textup{spec}(H)$. 
\end{proposition}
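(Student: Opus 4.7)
The plan is to combine the classical spectral theorem for Hermitian matrices with Proposition \ref{neknkefnwknfekn} (specialized to $c=1$) and the explicit kernel computed in Example \ref{nfewnkefnkwnken}. First I would diagonalize $H\in \textup{Herm}_{n+1}$ via the classical spectral theorem, writing $H=U^{*}DU$ with $U\in SU(n+1)$ (any unitary diagonalizer can be rescaled by a scalar of modulus one to land in $SU(n+1)$ without affecting $\phi_{U}$) and $D=\textup{diag}(\lambda_{1},\ldots,\lambda_{n+1})$ listing the eigenvalues of $H$, so that $\textup{spec}(H)=\{\lambda_{1},\ldots,\lambda_{n+1}\}$.

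Next, I would choose $r$ and $\xi=(\xi_{1},\ldots,\xi_{n})$ to put $D$ into the normal form appearing in Proposition \ref{neknkefnwknfekn} with $c=1$: namely $r:=\lambda_{n+1}$ and $\xi_{k}:=(\lambda_{n+1}-\lambda_{k})/(4\pi)$ for $k\in\{1,\ldots,n\}$. That proposition then yields $f_{H}=\moment_{1}^{\xi}\circ \phi_{U}+r$ on $\mathbb{P}_{n}(1)$. Since $\textup{Aut}(\mathbb{P}_{n}(1))=PU(n+1)$ is connected, $\phi_{U}$ automatically lies in $\textup{Aut}(\mathbb{P}_{n}(1))^{0}$, so this is a decomposition of the form required by Theorem \ref{newnknfkenkndvknfk}(a). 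Taking $K=K_{\textup{can}}$ to be the natural extension of the compatible projection of Proposition \ref{njewndknknknk}(a) and letting $Z:\mathbb{R}^{n}\oplus\mathbb{R}\to \mathcal{A}_{\mathcal{P}_{n+1}^{\times}}$ denote the kernel of $\moment_{1}$ relative to $K$, Proposition \ref{nfeknekfnkn} rewrites this as
\[
f_{H}(p)\;=\;\moment_{1}^{\xi}(\phi_{U}(p))+r\;=\;\mathbb{E}_{K(\phi_{U}(p))}\bigl(Z(\xi,r)\bigr),\qquad p\in \mathbb{P}_{n}(1).
\]

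Finally, I would read off the image of $Z(\xi,r)$ from Example \ref{nfewnkefnkwnken}: $Z(\xi,r)(x_{k})=-4\pi\xi_{k}+r=\lambda_{k}$ for $k<n+1$ and $Z(\xi,r)(x_{n+1})=r=\lambda_{n+1}$. The estimator map of $\mathcal{P}_{n+1}^{\times}$ has components $F_{i}(x_{j})=\delta_{ij}$ (Example \ref{exa:5.5}), hence is injective, so $\textup{spec}(f_{H})$ is well-defined via Definition \ref{nfeknkenkenkf} and equals the image of $Z(\xi,r)$, namely $\{\lambda_{1},\ldots,\lambda_{n+1}\}=\textup{spec}(H)$. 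I do not foresee any serious obstacle: the argument amounts to a dictionary between the eigenvalue parametrization supplied by Proposition \ref{neknkefnwknfekn} and the explicit kernel formula of Example \ref{nfewnkefnkwnken}, together with the connectedness of $PU(n+1)$.
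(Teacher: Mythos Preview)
Your proposal is correct and uses essentially the same ingredients as the paper (Proposition \ref{neknkefnwknfekn} with $c=1$ and the kernel formula of Example \ref{nfewnkefnkwnken}); the only difference is the direction of the argument: you start from the classical diagonalization $H=U^{*}DU$ and solve for $(\xi,r)$, whereas the paper starts from the decomposition $f=\moment^{\xi}\circ\varphi+r$ supplied by Proposition \ref{nenekfnsddnekn} and then invokes Proposition \ref{neknkefnwknfekn} to identify the corresponding $H$. Both routes arrive at the same identity $\textup{spec}(f_{H})=\{-4\pi\xi_{1}+r,\ldots,-4\pi\xi_{n}+r,r\}=\textup{spec}(H)$.
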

\begin{proof}
	Let $f$ be any K\"{a}hler function on $\mathbb{P}_{n}(1)$. Let $\moment:\mathbb{P}_{n}(1)\to \mathbb{R}^{n}$ 
	be the momentum map defined by $\moment([z])=-\tfrac{4\pi}{\langle z,z\rangle}(|z_{1}|^{2},...,|z_{n}|^{2})$. 
	By Proposition \ref{nenekfnsddnekn}, there are 
	$\phi\in \textup{Aut}(\mathbb{P}_{n}(1))^{0}$ and $(\xi,r)\in \mathbb{R}^{n}\oplus 
	\mathbb{R}$ such that $f([z])=\moment^{\xi}(\phi([z]))+r$ for all $[z]\in \mathbb{P}_{n}(1)$. 
	Let $Z$ be the kernel of $\moment$ relative to $K([z])(x_{k})=\tfrac{|z_{k}|^{2}}{\langle z,z\rangle}.$
	By definition of a kernel, we have $f([z])=\mathbb{E}_{K(\phi([z]))}(Z(\xi,r))$, and thus the spectrum of 
	$f$ is the image of $Z(\xi,r)$, which is given, in view of Example \ref{nfewnkefnkwnken}, by:
	\begin{eqnarray}\label{nekkwnkfnek}
		\textup{spec}(f)=\textup{image of}\,\,Z(\xi,r)=\{-4\pi\xi_{1}+r,...,-4\pi\xi_{n}+r,r\}. 
	\end{eqnarray}
	On the other hand, $\phi$ must be of the form $\phi([z])=[Uz]$ for some 
	$U\in SU(n+1),$ and thus Proposition \ref{neknkefnwknfekn} implies that $f=f_{H}$, where 
	$H=U^{*}DU$ and $D=\textup{diag}(-4\pi\xi_{1}+r,...,-4\pi\xi_{n}+r,r)$ (obvious notation). 
	Clearly, the spectrum of $H$ coincides with \eqref{nekkwnkfnek}. The result follows.
\end{proof}

	We now focus our attention on the case $\mathcal{E}=\mathcal{B}(n)$. 
	Let us identify the unite sphere $S^{2}\subset \mathbb{R}^{3}$ with $\mathbb{P}_{1}(\tfrac{1}{n})$ via the map 
	$\psi:S^{2}\to \mathbb{P}_{1}(\tfrac{1}{n})$ defined by 
	\begin{eqnarray}\label{nefknekfnknk}
		\psi(x,y,z)=\big[\cos(\beta/2)e^{i\alpha},\sin(\beta/2)\big],
	\end{eqnarray}
	where $S^{2}$ is parametrized by spherical coordinates:
	\begin{eqnarray*}
		\left \lbrace
		\begin{array}{lll}
			x &=& \cos(\alpha)\sin(\beta), \\
			y &=& \sin(\alpha)\sin(\beta), \\
			z &=& \cos(\beta), 
		\end{array}
		\right. 
		 \,\,\,\,\,\,\,\,\,\,\,\alpha\in [0,2\pi],\,\, \beta\in [0,\pi].
	\end{eqnarray*}
	Below we will regard $S^{2}$ as a regular torification of $\mathcal{B}(n)$. Under the identification 
	$S^{2}\cong \mathbb{P}_{1}(\tfrac{1}{n})$, 
	the natural extension $K$ of Example \ref{nekwnkefnk} and the momentum map $\moment$ of Example \ref{nfekwnknfekkn} read
	\begin{eqnarray}\label{jfekwnknefkn}
		\left\lbrace
		\begin{array}{lll}
			K(x,y,z)(k)    &=& 2^{-n}\binom{n}{k}(1+z)^{k}(1-z)^{n-k}, \\[0.4em]
			\moment(x,y,z) &=& -2\pi n(1+z),
		\end{array}
		\right.
	\end{eqnarray}
	where $(x,y,z)\in S^{2}$ and $k\in \{0,1,...,n\}$. (By convention $0^{0}=1$, and so 
	$K(0,0,1)=\delta_{n}$ and $K(0,0,-1)=\delta_{0}$, where $\delta_{j}:\Omega\to \mathbb{R}$ is defined by $\delta_{j}(i)=1$ if 
	$i=j$, $0$ otherwise.) The K\"{a}hler metric on $S^{2}\cong \mathbb{P}_{1}(\tfrac{1}{n})$ is $ng$, 
	where $g$ is the Riemannian metric on $S^{2}$ induced from the Euclidean metric on $\mathbb{R}^{3}$. The isometry group 
	of $(S^{2},ng)$ is the orthonormal group $O(3)$. Since $S^{2}$ is compact, the identity component of its group 
	of holomorphic isometries coincides with the identity component of the group of isometries of $S^{2}$ 
	(see Proposition \ref{jefknkefknk}). Thus $\textup{Aut}(S^{2},ng)^{0}=O(3)^{0}=SO(3)$. 
	Regarding K\"{a}hler functions, we have the following result.

\begin{proposition}\label{nfekwnkefnknkn}
	The space of K\"{a}hler functions on $\mathbb{P}_{1}(\tfrac{1}{n})\cong S^{2}$ is spanned by the coordinate functions 
	$x,y,z$ and the constant function $1$. If $\alpha,\beta,\gamma,\delta$ are real numbers, then 
	the spectrum of $f(x,y,z)=\alpha x+\beta y+\gamma z+\delta$ is given by:
	\begin{eqnarray}\label{nekwwnknefk}
		\textup{spec}(f)=\bigg\{\dfrac{2}{n}\|(\alpha,\beta,\gamma)\|\bigg(-\dfrac{n}{2}+k\bigg)+\delta\,\,\bigg\vert\,\,k=0,1,...,n
		\bigg\},
	\end{eqnarray}
	where $\|(\alpha,\beta,\gamma)\|$ is the Euclidean norm of $(\alpha,\beta,\gamma)\in \mathbb{R}^{3}$. In particular, 
	if $\|(\alpha,\beta,\gamma)\|=\tfrac{n}{2}=:j$ and $\delta=0$, then $\textup{spec}(f)=\{-j,-j+1,...,j-1,j\}$.
\end{proposition}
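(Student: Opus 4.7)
\medskip
\noindent\textbf{Proof proposal.} The plan is to apply Proposition~\ref{nenekfnsddnekn}(a) together with the explicit description of the momentum map, the isometry group, and the kernel already computed in Example~\ref{jkwwjefjwkdjk}. First I would identify the Kähler functions. By Proposition~\ref{nenekfnsddnekn}(a), $f$ is Kähler if and only if there exist $\xi\in\mathbb{R}$, $\varphi\in \textup{Aut}(S^{2},ng)^{0}$ and $r\in\mathbb{R}$ with $f=\moment^{\xi}\circ\varphi+r$, where by \eqref{jfekwnknefkn} we have $\moment^{\xi}(x,y,z)=-2\pi n\,\xi(1+z)$, and where the remark preceding the proposition gives $\textup{Aut}(S^{2},ng)^{0}=SO(3)$. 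Writing $\varphi(p)=Up$ for $U\in SO(3)$, we obtain
\begin{equation*}
	f(x,y,z)=-2\pi n\,\xi\,\bigl(U_{31}x+U_{32}y+U_{33}z\bigr)+\bigl(r-2\pi n\,\xi\bigr).
\end{equation*}
Since the third row of a matrix in $SO(3)$ can be any unit vector of $\mathbb{R}^{3}$, varying $\xi\in\mathbb{R}$, $U\in SO(3)$ and $r\in\mathbb{R}$ realises exactly the functions $\alpha x+\beta y+\gamma z+\delta$ with $(\alpha,\beta,\gamma)\in\mathbb{R}^{3}$ and $\delta\in\mathbb{R}$, which proves the first assertion.

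\medskip
Next I would compute the spectrum. Given $f(x,y,z)=\alpha x+\beta y+\gamma z+\delta$, set $u=(\alpha,\beta,\gamma)$; assume first $u\neq 0$. Matching coefficients in the display above, I would choose $\xi=-\|u\|/(2\pi n)$, take $U\in SO(3)$ whose third row is $u/\|u\|$, and set $r=\delta+2\pi n\,\xi=\delta-\|u\|$, so that $f=\moment^{\xi}\circ\varphi+r$ with $\varphi(p)=Up$. By the definition of a kernel (Proposition~\ref{nfeknekfnkn}) and the explicit formula of Example~\ref{jkwwjefjwkdjk}, we have $f(p)=\mathbb{E}_{K(\varphi(p))}(Z(\xi,r))$ where
\begin{equation*}
	Z(\xi,r)(k)=-4\pi\,\xi\,k+r=\tfrac{2\|u\|}{n}\,k+\delta-\|u\|=\tfrac{2}{n}\|u\|\bigl(-\tfrac{n}{2}+k\bigr)+\delta
\end{equation*}
for $k\in\{0,1,\dots,n\}$. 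Since the estimator map $F(k)=k$ is injective on $\Omega=\{0,\dots,n\}$, Theorem~\ref{newnknfkenkndvknfk}(b) guarantees that the image of $Z(\xi,r)$ is independent of the choices made, and hence Definition~\ref{nfeknkenkenkf} gives the formula \eqref{nekwwnknefk}. The case $u=0$ is handled by taking $\xi=0$, $r=\delta$, producing the single value $\delta$, which again agrees with \eqref{nekwwnknefk}. The final claim is the arithmetic identity $\tfrac{2}{n}\cdot\tfrac{n}{2}\bigl(-\tfrac{n}{2}+k\bigr)=-j+k$ with $j=n/2$.

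\medskip
I do not foresee a genuine obstacle: once the momentum map and the identity component of $\textup{Aut}(S^{2},ng)$ are in hand, the argument is a direct translation via the kernel formula. The only point requiring minor care is the sign and normalisation of $\xi$, which I would fix unambiguously by matching both the linear and the constant parts of $f$ as above.
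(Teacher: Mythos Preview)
Your proof is correct and follows essentially the same route as the paper: apply Proposition~\ref{nenekfnsddnekn}(a) with $\textup{Aut}(S^{2},ng)^{0}=SO(3)$ and the momentum map \eqref{jfekwnknefkn} to identify the K\"{a}hler functions, then read off the spectrum via the kernel of Example~\ref{jkwwjefjwkdjk}. The only cosmetic difference is that the paper begins from an arbitrary decomposition $f=\moment^{\xi}\circ\phi_{U}+r$, expresses $\alpha,\beta,\gamma,\delta$ in terms of $(\xi,U,r)$, and then handles the sign ambiguity in $\xi$ by the symmetry $Z(\xi,r)(k)=Z(-\xi,r)(n-k)$; you instead start from $f=\alpha x+\beta y+\gamma z+\delta$ and make a definite choice $\xi=-\|u\|/(2\pi n)$, which sidesteps the sign discussion altogether.
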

\begin{proof}
	Let $f:S^{2}\to \mathbb{R}$ be any K\"{a}hler function. By Proposition \ref{nenekfnsddnekn}, there are $U=(U_{ij})\in SO(3)$ and 
	$(\xi,r)\in \mathbb{R}^{2}$ such that $f=\moment^{\xi}\circ \phi_{U}+r$, where $\phi_{U}$ is the holomorphic isometry 
	on $S^{2}$ defined by $\phi_{U}(p)=Up$. In view of \eqref{jfekwnknefkn}, we have 
	\begin{eqnarray}\label{nekwnkefnkn}
		f(x,y,z)&=&-2\pi n\xi (1+U_{31}x+U_{32}y+U_{33}z)+r=\alpha x +\beta y +\gamma z+\delta, 
	\end{eqnarray}
	where 
	\begin{eqnarray}\label{nfeknknefkd}
	\left\lbrace
		\begin{array}{lll}
		\alpha &= & -2\pi n \xi U_{31}, \\
		\beta  &= &-2\pi n \xi U_{32},  \\
		\gamma &= &-2\pi n \xi U_{33},  \\
		\delta &=&-2\pi n \xi +r.
		\end{array}
	\right.
	\end{eqnarray}
	Note that 
	\begin{eqnarray*}
		\|(\alpha,\beta,\gamma)\|^{2}=4\pi^{2}n^{2}\xi^{2}(U_{31}^{2}+U_{32}^{2}+U_{33}^{2})=4\pi^{2}n^{2}\xi^{2}
	\end{eqnarray*}
	and hence $\|(\alpha,\beta,\gamma)\|=2\pi n|\xi|$. It follows from \eqref{nekwnkefnkn} that every K\"{a}hler function 
	on $S^{2}$ is a linear combination of $1,x,y,z$. By choosing appropriately $U$ and $(\xi,r)$, 
	one sees that the functions $1,x,y,z$ are all K\"{a}hler functions on $S^{2}$. Therefore 
	the space of K\"{a}hler functions on $S^{2}$ is precisely the vector space spanned by $1,x,y,z$.

	Let $Z$ be the kernel of $\moment$ relative to $K$. 
	Thus $f(p)=\mathbb{E}_{K(\phi_{U}(p))}(Z(\xi,r))$ for all $p\in S^{2}$ and the spectrum of $f$ is the image of $Z(\xi,r)$. 
	By Example \ref{jkwwjefjwkdjk}, we have $Z(\xi,r)(k)=-4\pi\xi k+r$ for all $k=0,1,...,n$, and since 
	$\delta =-2\pi n \xi +r$ (see \eqref{nfeknknefkd}), we get 
	\begin{eqnarray}\label{nfekwnkfenkn}
		Z(\xi,r)(k)=4\pi \xi (n/2-k)+\delta.
	\end{eqnarray}
	Note that $Z(\xi,r)(k)=Z(-\xi,r)(n-k)$ for all $k=0,1,...,n$, which 
	implies that $\textup{spec}(f)$ does not depend on the sign of $\xi.$ It follows that 
	\begin{eqnarray}
		\textup{spec}(f) = \textup{image of}\,\,Z(|\xi|,r)= \{4\pi |\xi| (n/2-k)+\delta\,\,\vert\,\,k=0,1,...,n\}.
	\end{eqnarray}
	The rest of the proof follows from this and the formula $\|(\alpha,\beta,\gamma)\|=2\pi n|\xi|$.
\end{proof}

\section{Cram\'er-Rao equality}\label{nfeknkefnknkn} 

	
	In this section, we prove an analogue of a result from information geometry known as the 
	\textit{Cram\'er-Rao equality} (see Theorem \ref{nceknkenknknk} and Lemma \ref{nenkenkfnknk}), 
	and discuss two applications. 

	We assume throughout that $\Phi:\mathbb{T}^{n}\times N\to N$ is a K\"{a}hler toric manifold and a regular torification of an 
	exponential family $\mathcal{E}$ defined over a finite set $\Omega=\{x_{1},...,x_{m}\}$, as in Setting \ref{mnekmnwknefknk}. 
	Unless we specify otherwise, we do not require the estimator map $F$ to be injective. 
	Let $\moment:N\to \mathbb{R}^{n}$ be a momentum map associated to the torus action.

\subsection{Cram\'er-Rao equality.}

	Given a probability function $p:\Omega\to [0,1]$ and a random variable $X:\Omega\to \mathbb{R}$, 
	we will denote by $\mathbb{V}_{p}(X)$ the variance of $X$ with respect to $p$. Thus 
	\begin{eqnarray*}
		\mathbb{V}_{p}(X)=\mathbb{E}_{p}\big((X-\mathbb{E}_{p}(X))^{2}\big).
	\end{eqnarray*}
	Given a Riemannian manifold $(M,h)$, we shall use $\textup{grad}^{h}(f)$ to denote the Riemannian 
	gradient of a function $f:M\to \mathbb{R}$ (thus $h(\textup{grad}^{h}(f)(p),u)=df(u)$ for all $u\in TM$). 
	If $u\in TM$, then $\|u\|:=\sqrt{h(u,u)}$.

\begin{theorem}[\textbf{Cram\'er-Rao equality, toric version}]\label{nceknkenknknk}
	Let the hypotheses be as in Setting \ref{mnekmnwknefknk}. Let $f(p)=\mathbb{E}_{K(\phi(p))}(X)$ be a K\"{a}hler function on $N$, where 
	$K$ is the natural extension of some compatible projection, $\phi\in \textup{Aut}(N,g)^{0}$ and $X\in \mathcal{A}_{\mathcal{E}}$. 
	The following identity holds for all $p\in N$:
	\begin{eqnarray}\label{nkwwnkenfkn}
		\mathbb{V}_{K(\phi(p))}(X)=\|\,\textup{grad}^{g}(f)(p)\,\|^{2}.
	\end{eqnarray}
\end{theorem}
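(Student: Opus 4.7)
The plan is to first reduce to the case $\phi=\mathrm{id}_N$. Since $\phi\in\textup{Aut}(N,g)^0$ is an isometry, setting $\tilde f := f\circ \phi^{-1}$ gives $\tilde f(q)=\mathbb{E}_{K(q)}(X)$, and
\[
\|\textup{grad}^g f(p)\|^2 = \|\textup{grad}^g \tilde f(\phi(p))\|^2,\qquad \mathbb{V}_{K(\phi(p))}(X)=\mathbb{V}_{K(q)}(X)\big|_{q=\phi(p)},
\]
so it suffices to prove $\|\textup{grad}^g \tilde f(q)\|^2=\mathbb{V}_{K(q)}(X)$ for all $q\in N$.

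Next I would identify the relevant Riemannian submersion. Fix a toric parametrization $(L,Y,F)$ inducing the given compatible projection, so $\kappa=\pi_L\circ F^{-1}:N^{\circ}\to\mathcal{E}$. From Dombrowski's construction (Section \ref{nkwwnkwnknknfkn}), the Kähler metric on $T\mathcal{E}$ in coordinates $(q,r)$ has block form $\mathrm{diag}(h_F(q),h_F(q))$, so $\pi:T\mathcal{E}\to\mathcal{E}$ is a Riemannian submersion whose vertical distribution is spanned by the $\partial/\partial r^i$. The deck action of $\Gamma(L)$ is by translations in the fibers, hence by isometries preserving this vertical distribution; therefore the metric descends and $\pi_L:M_L\to\mathcal{E}$ is a Riemannian submersion. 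Since $F$ is a Kähler isometry, $\kappa:(N^{\circ},g)\to(\mathcal{E},h_F)$ is a Riemannian submersion as well.

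On $N^{\circ}$ the function $\tilde f$ coincides with $f_X\circ\kappa$, where $f_X(p):=\mathbb{E}_p(X)$ is smooth on $\mathcal{E}$. The standard identity for Riemannian submersions gives
\[
\|\textup{grad}^g(f_X\circ\kappa)(p)\|^2 \;=\; \|\textup{grad}^{h_F} f_X(\kappa(p))\|^2, \qquad p\in N^{\circ}.
\]
I would then invoke the classical Cramér-Rao equality on $\mathcal{E}$: writing $X=\sum_k a_k F_k + c$, so that $f_X=\sum_k a_k \eta_k + c$ in the expectation parameters, the exponential-family identity $\partial_i\eta_k=(h_F)_{ik}$ yields $\textup{grad}^{h_F} f_X=\sum_i a_i\,\partial_{\theta_i}$, and hence
\[
\|\textup{grad}^{h_F} f_X(q)\|^2 \;=\; \sum_{i,j} a_i a_j\,(h_F)_q(\partial_i,\partial_j) \;=\; \sum_{i,j} a_i a_j\,\mathrm{Cov}_q(F_i,F_j) \;=\; \mathbb{V}_q(X).
\]
Combining these two formulas proves \eqref{nkwwnkenfkn} on the open dense set $N^{\circ}$.

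To conclude, I would extend by continuity. Both sides of \eqref{nkwwnkenfkn} are continuous in $p\in N$: the left side $\|\textup{grad}^g f\|^2$ is smooth, and for the right side the maps $p\mapsto K(\phi(p))(x_k)$ are smooth on $N$ (Section \ref{ndknvfknkfnvknk}), so $\mathbb{V}_{K(\phi(p))}(X)=\sum_k X(x_k)^2 K(\phi(p))(x_k) - \big(\sum_k X(x_k)K(\phi(p))(x_k)\big)^2$ is smooth too. By density of $N^{\circ}$ (property (F4)), the identity extends to all of $N$. The main technical obstacle I anticipate is the precise verification that $\kappa$ is a Riemannian submersion through the chain $T\mathcal{E}\xrightarrow{q_L} M_L\xrightarrow{F} N^{\circ}$; specifically, one must check that the action of $\Gamma(L)$ preserves both the metric and the horizontal/vertical splitting coming from Dombrowski's construction, and that $F$ transports this submersion structure to $N^{\circ}$ faithfully.
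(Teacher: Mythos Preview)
Your proposal is correct and takes essentially the same approach as the paper: reduction to $\phi=\mathrm{id}$, the classical Cram\'er--Rao identity on $\mathcal{E}$ (your computation is exactly Lemma~\ref{nenkenkfnknk}), and extension by density of $N^{\circ}$. The only cosmetic difference is that where you argue $\kappa:N^{\circ}\to\mathcal{E}$ is a Riemannian submersion, the paper instead works upstairs on $T\mathcal{E}$ via the coordinate identity $\|\textup{grad}^{\tilde g}(\bar f\circ\pi)\|=\|\textup{grad}^{h_F}(\bar f)\circ\pi\|$ from Dombrowski's block-diagonal form (Lemma~\ref{nckndwkenknkennk}) and then transfers to $N^{\circ}$ through the local isometry $\tau$---this is precisely the verification you flagged as the main technical obstacle.
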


	The proof is broken up into a few lemmas. 

\begin{lemma}\label{nckndwkenknkennk}
	Suppose that $(M,h,\nabla)$ is a dually flat manifold. Let $g$ be the K\"{a}hler metric on $TM$ 
	associated to Dombrowski's construction (see Section \ref{nkwwnkwnknknfkn}), and let $\pi:TM\to M$ denote the canonical projection. 
	Given a smooth function $f:M\to \mathbb{R}$, we have 
	\begin{eqnarray*}
	\|\textup{grad}^{g}(f\circ \pi)(u)\|=\|\textup{grad}^{h}(f)(\pi(u))\|
	\end{eqnarray*}
	for all $u\in TM$. 
\end{lemma}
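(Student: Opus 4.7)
The plan is a direct computation in suitably chosen coordinates, exploiting the explicit block-diagonal form of the K\"{a}hler metric $g$ recalled in Section~\ref{nkwwnkwnknknfkn}. Fix $u\in TM$ and set $p=\pi(u)$. Pick affine coordinates $(x_{1},\ldots,x_{n})$ for $\nabla$ in a neighborhood of $p$, and let $(q_{1},\ldots,q_{n},r_{1},\ldots,r_{n})$ be the induced coordinates on $\pi^{-1}(U)\subseteq TM$, where $q_{i}=x_{i}\circ \pi$ and the $r_{i}$ are the fiber coordinates. In these coordinates, the metric $g$ takes the block-diagonal form $g(q,r)=\textup{diag}\bigl(h(x),h(x)\bigr)$, so the inverse is $g^{-1}(q,r)=\textup{diag}\bigl(h(x)^{-1},h(x)^{-1}\bigr)$.

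Next, I would observe that $f\circ\pi$ is constant along the fibers of $\pi$, hence $\partial_{r_{i}}(f\circ\pi)=0$ for every $i$, while $\partial_{q_{i}}(f\circ\pi)(q,r)=(\partial_{i}f)(x)$. Combined with the block-diagonal form of $g^{-1}$, this shows that the gradient has no vertical component:
\begin{eqnarray*}
\textup{grad}^{g}(f\circ\pi)(u)=\sum_{i,j}h^{ij}(p)\,(\partial_{j}f)(p)\,\dfrac{\partial}{\partial q_{i}}\bigg\vert_{u},
\end{eqnarray*}
where $(h^{ij})$ denotes the inverse of the matrix $(h_{ij})=\bigl(h(\partial_{i},\partial_{j})\bigr)$.

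Finally, I would compute the squared norm using the $q$-block of $g$, which equals $h(x)$:
\begin{eqnarray*}
\|\textup{grad}^{g}(f\circ\pi)(u)\|^{2}
&=&\sum_{i,j,k,l}h_{ij}(p)\,h^{ik}(p)(\partial_{k}f)(p)\,h^{jl}(p)(\partial_{l}f)(p)\\
&=&\sum_{k,l}h^{kl}(p)(\partial_{k}f)(p)(\partial_{l}f)(p)\\
&=&\|\textup{grad}^{h}(f)(p)\|^{2},
\end{eqnarray*}
which is the desired identity. There is no serious obstacle here: the whole argument rests on the fact that $f\circ\pi$ is horizontal and that Dombrowski's construction produces a K\"{a}hler metric whose horizontal block is exactly $h$, so the gradient of a horizontally-lifted function is itself a horizontal lift isometric to $\textup{grad}^{h}(f)$. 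The only thing to be careful about is bookkeeping of indices and using the coordinate-dependent identification of the horizontal subspace with $T_{\pi(u)}M$ provided by the chart.
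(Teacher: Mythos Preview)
Your proof is correct and follows essentially the same approach as the paper: both arguments work in affine coordinates, use the block-diagonal form $g=\textup{diag}(h,h)$ from Dombrowski's construction, note that $f\circ\pi$ has vanishing $r$-derivatives, and reduce the squared gradient norm to $\sum_{k,l}h^{kl}(\partial_{k}f)(\partial_{l}f)$. The only cosmetic difference is that you write out the gradient vector explicitly before taking its norm, whereas the paper applies the formula $\|\textup{grad}^{g}F\|^{2}=\sum g^{ij}\partial_{i}F\,\partial_{j}F$ directly.
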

\begin{proof}
	Let $(x_{1},...,x_{n})$ be a coordinate system on $U\subseteq M$, and let $(q,r)=
	(q_{1},...,q_{n},r_{1},...,r_{n})$ be the corresponding 
	coordinates on $\pi^{-1}(U)$ (see Section \ref{nkwwnkwnknknfkn}). With standard notation, 
	we have $\textup{grad}^{h}(f)=\sum_{i,j=1}^{n}h^{ij}\tfrac{\partial f}{\partial x_{i}}\tfrac{\partial}{\partial x_{j}}$ 
	and hence 
        \begin{eqnarray}\label{neknkneknkn}
		\|\textup{grad}^{h}(f)\|^{2}=\sum_{i,j=1}^{n}h^{ij}\dfrac{\partial f}{\partial x_{i}}\dfrac{\partial f}{\partial x_{j}}
	\end{eqnarray}
	on $U$. Without loss of generality, we may assume that $(x_{1},...,x_{n})$ are affine coordinates with respect to $\nabla$. 
	In this case, the local expression of $g$ in the coordinates $(q,r)$ is given by $g(q,r)=
	\big[\begin{smallmatrix}
		h(x)  &  0\\
		0     &   h(x)
	\end{smallmatrix}
	\big]$, where $h(x)$ is the matrix representation of $h$ in the affine coordinates $x=(x_{1},...,x_{n})$.
	From this it follows that $g^{ij}=g^{i+n\,j+n}=h^{ij}\circ \pi$ if $i,j\in \{1,...,n\}$ and $g^{ij}=0$ otherwise. 
	Applying \eqref{neknkneknkn} with $g$ and $f\circ \pi$ in place of $h$ and $f$, respectively, we find 
        \begin{eqnarray}\label{nfekwknrvknk}
		\|\textup{grad}^{g}(f\circ \pi)\|^{2}=\sum_{i,j=1}^{n}(h^{ij}\circ \pi)
		\dfrac{\partial (f\circ \pi)}{\partial q_{i}}\dfrac{\partial (f\circ \pi)}{\partial q_{j}}
		+\sum_{i,j=1}^{n}(h^{ij}\circ \pi)\dfrac{\partial (f\circ \pi)}{\partial r_{i}}\dfrac{\partial (f\circ \pi)}{\partial r_{j}}.
	\end{eqnarray}
	Clearly $\tfrac{\partial (f\circ \pi)}{\partial q_{i}}=\tfrac{\partial f}{\partial x_{i}}\circ \pi$ 
	and $\tfrac{\partial (f\circ \pi)}{\partial r_{i}}=0$, and so \eqref{nfekwknrvknk} can be rewritten as 
        \begin{eqnarray*}
		\|\textup{grad}^{g}(f\circ \pi)\|^{2}=
		\bigg(\sum_{i,j=1}^{n}h^{ij}\dfrac{\partial f}{\partial x_{i}}\dfrac{\partial f}{\partial x_{j}}\bigg)\circ \pi=
		\|\textup{grad}^{h}(f)\circ \pi\|^{2},
	\end{eqnarray*}
	from which the result follows. 
\end{proof}

\begin{lemma}[\textbf{Cram\'er-Rao equality, classical version}]\label{nenkenkfnknk}
	Let $\mathcal{E}$ be an exponential family defined over a finite set $\Omega=\{x_{0},...,x_{m}\}$, as in 
	Setting \ref{mnekmnwknefknk}(a). Let $f:\mathcal{E}\to \mathbb{R}$ be a function of the 
	form $f(p)=\mathbb{E}_{p}(X)$, where $X\in \mathcal{A}_{\mathcal{E}}$ and $p\in \mathcal{E}$. Then 
	\begin{eqnarray*}
		\mathbb{V}_{p}(X)=\|\textup{grad}^{h_{F}}(f)(p)\|^{2}
	\end{eqnarray*}
	for all $p\in \mathcal{E}$.
\end{lemma}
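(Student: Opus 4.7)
The plan is to pass to natural coordinates $\theta=(\theta_1,\ldots,\theta_n)$ on $\mathcal{E}$ and carry out a direct computation, exploiting the duality between the natural parameters $\theta_i$ and the \emph{expectation parameters} $\eta_i(\theta):=\mathbb{E}_{p_\theta}(F_i)$ that is characteristic of exponential families.

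First I record two standard identities for $\mathcal{E}$. Starting from $\sum_{x\in\Omega}p(x;\theta)=1$ together with the explicit form $p(x;\theta)=\exp\bigl[C(x)+\sum_i \theta_i F_i(x)-\psi(\theta)\bigr]$, one differentiation yields $\partial_i\psi(\theta)=\eta_i(\theta)$ and a second differentiation yields $\partial_i\eta_j(\theta)=\mathrm{Cov}_{p_\theta}(F_i,F_j)$. Combined with $\partial_i\ln p_\theta = F_i-\eta_i$, the definition of the Fisher metric becomes
\begin{equation*}
(h_F)_{ij}(\theta)=\mathbb{E}_{p_\theta}\bigl((F_i-\eta_i)(F_j-\eta_j)\bigr)=\mathrm{Cov}_{p_\theta}(F_i,F_j)=\partial_i\eta_j(\theta).
\end{equation*}
In particular the Jacobian of the change of variables $\theta\mapsto\eta$ is precisely the Fisher matrix.

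Second, since $X\in\mathcal{A}_\mathcal{E}$, I can write $X=a_0+\sum_{i=1}^n a_i F_i$, so that $f(p_\theta)=a_0+\sum_i a_i\eta_i(\theta)$. Differentiating in $\theta$ gives $\partial_i f=\sum_k a_k\,\partial_i\eta_k=\sum_k a_k (h_F)_{ik}$. Plugging into $\|\mathrm{grad}^{h_F}(f)\|^2=\sum_{i,j}h_F^{ij}\,\partial_i f\,\partial_j f$ and applying the contraction $\sum_i h_F^{ij}(h_F)_{ik}=\delta^j_k$ twice collapses the sums to
\begin{equation*}
\|\mathrm{grad}^{h_F}(f)(p_\theta)\|^2=\sum_{k,l}a_k a_l (h_F)_{kl}(\theta)=\sum_{k,l}a_k a_l\,\mathrm{Cov}_{p_\theta}(F_k,F_l)=\mathbb{V}_{p_\theta}\Bigl(\sum_k a_k F_k\Bigr)=\mathbb{V}_{p_\theta}(X),
\end{equation*}
where the last equality uses that subtracting the constant $a_0$ does not change the variance.

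There is essentially no real obstacle here, as the identity is classical; the only care point is that $\partial_i f$ is \emph{not} $a_i$, but $\sum_k a_k(h_F)_{ik}$, because $f$ depends on $\theta$ only through the expectation parameters $\eta(\theta)$. Equivalently, and perhaps more transparently, one may change coordinates to $(\eta_1,\ldots,\eta_n)$: in these coordinates $f$ becomes the affine function $a_0+\sum_i a_i\eta_i$ with constant partials $a_i$, and the Fisher metric matrix becomes the inverse Jacobian sandwich $h_F^{-1}\cdot h_F\cdot h_F^{-1}=h_F^{-1}$, so its dual (inverse) matrix is $h_F$; then $\|\mathrm{grad}^{h_F}(f)\|^2=\sum_{i,j}(h_F)_{ij}a_i a_j=\mathbb{V}_p(X)$ with no intermediate contraction needed.
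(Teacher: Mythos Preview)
Your proof is correct and follows essentially the same approach as the paper: write $X=a_0+\sum_i a_i F_i$, express $f$ in terms of the expectation parameters $\eta_i$, and reduce $\|\mathrm{grad}^{h_F}(f)\|^2$ to the covariance form $\sum_{k,l}a_ka_l(h_F)_{kl}=\mathbb{V}_p(X)$. The only cosmetic difference is that the paper cites the duality $\mathrm{grad}^{h_F}(\eta_i)=\partial/\partial\theta_i$ from the literature, whereas you derive the equivalent statement $(h_F)_{ij}=\partial_i\eta_j$ directly and then contract; your alternative $\eta$-coordinate remark is a nice bonus.
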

\begin{proof}
	Let $f(p)=\mathbb{E}_{p}(X)$, where $X\in \mathcal{A}_{\mathcal{E}}$. By definition of $\mathcal{A}_{\mathcal{E}}$, 
	there are real numbers $a_{0},....,a_{n}$ such that $X=a_{0}+a_{1}F_{1}+...+a_{n}F_{n}$, and thus
	$f(p)=a_{0}+\sum_{i=1}^{n}a_{i}\eta_{i}$, where $\eta_{i}:\mathcal{E}\to \mathbb{R}$ is defined by $\eta_{i}(p)=\mathbb{E}_{p}(F_{i})$. 
	Taking the gradient with respect to $h_{F}$, we get 
	$\textup{grad}^{h_{F}}(f)=\sum_{i=1}^{n}a_{i}\textup{grad}^{h_{F}}(\eta_{i})$, and thus
	\begin{eqnarray*}
		\|\textup{grad}^{h_{F}}(f)\|^{2}=\sum_{i,j=1}^{n}a_{i}a_{j}
		h_{F}(\textup{grad}^{h_{F}}(\eta_{i}),\textup{grad}^{h_{F}}(\eta_{j})). 
	\end{eqnarray*}
	In the context of information geometry, it is proven that (see, e.g., \cite{Amari-Nagaoka}):
	\begin{enumerate}[(1)]
	\item $\textup{grad}^{h_{F}}(\eta_{i})=\tfrac{\partial}{\partial \theta_{i}}$ for all $i\in \{1,...,n\}$.
	\item $(h_{F})_{ij}(p)=\mathbb{E}_{p}\big((F_{i}-\mathbb{E}_{p}(F_{i})(F_{j}-\mathbb{E}_{p}(F_{j}))\big)$ for all $p\in \mathcal{E}$ and 
		$i,j\in \{1,...,n\}$, where $(h_{F})_{ij}=h_{F}(\tfrac{\partial}{\partial \theta_{i}},\tfrac{\partial}{\partial \theta_{j}})$.
	\end{enumerate}
	Thus 
	\begin{eqnarray*}
		\|\textup{grad}^{h_{F}}(f)\|^{2} &=& \sum_{i,j=1}^{n}a_{i}a_{j}(h_{F})_{ij}= \mathbb{E}_{p}\bigg[\sum_{i,j=1}^{n}a_{i}a_{j}
		(F_{i}-\mathbb{E}_{p}(F_{i})(F_{j}-\mathbb{E}_{p}(F_{j}))\bigg]\\
		&=&\mathbb{E}_{p}\bigg[\bigg(\sum_{i=1}^{n}a_{i}(F_{i}-\mathbb{E}_{p}(F_{i}))\bigg)^{2}\bigg] = \mathbb{V}_{p}(X-a_{0}).
	\end{eqnarray*}
	It follows that $\|\textup{grad}^{h_{F}}(f)\|^{2}=\mathbb{V}_{p}(X-a_{0})$. 
	Since $\mathbb{V}_{p}(X-a_{0})=\mathbb{V}_{p}(X)$, the lemma follows. 
\end{proof}

\begin{proof}[Proof of Theorem \ref{nceknkenknknk}]
	Let $f(p)=\mathbb{E}_{K(\phi(p))}(X)$ be as in Theorem \ref{nceknkenknknk}. 
	First we observe that it is enough to prove the result when $\phi$ is the identity map. Indeed, 
	the fact that $\phi$ is an isometry implies that 
	\begin{eqnarray*}
		\|\textup{grad}^{g}(f)(p)\|^{2}=\|\textup{grad}^{g}(\widetilde{f})(\phi(p))\|^{2}
	\end{eqnarray*}
	for all $p\in N$, where $\widetilde{f}:N\to \mathbb{R}$ is the function defined 
	by $\widetilde{f}(p)=\mathbb{E}_{K(p)}(X)$. Thus \eqref{nkwwnkenfkn} holds for all $p\in N$ 
	if and only if $\mathbb{V}_{K(q)}(X)=\|\,\textup{grad}^{g}(\widetilde{f})(q)\,\|^{2}$ holds for all $q=\phi(p)\in N$. 
	Because of this, it suffices to assume that $f=\widetilde{f}$. 

	Let $\pi=\kappa\circ \tau:T\mathcal{E}\to \mathcal{E}$ be a toric factorization such that $K$ is the natural extension of $\kappa$, 
	and let $\overline{f}:\overline{\mathcal{E}}\to \mathbb{R}$ be the function defined by $\overline{f}(p)=\mathbb{E}_{p}(X)$. 
	Note that $f=\overline{f}\circ K$ on $N$ and $f\circ \tau=\overline{f}\circ \pi$ on $T\mathcal{E}$.
	Let $\widetilde{g}$ be the K\"{a}hler metric on $T\mathcal{E}$ associated to Dombrowski's construction.	
	By Lemmas \ref{nckndwkenknkennk} and \ref{nenkenkfnknk}, we have 
        \begin{eqnarray}\label{neknkwnefknk}
		\|\textup{grad}^{\widetilde{g}}(\overline{f}\circ \pi)(u)\|^{2}=\|\textup{grad}^{h_{F}}(\overline{f})(\pi(u))\|^{2}=
		\mathbb{V}_{\pi(u)}(X)=\mathbb{V}_{K(\tau(u))}(X) 
	\end{eqnarray}
	for all $u\in T\mathcal{E}$. On the other hand, the fact that $\tau$ is locally a Riemannian isometry implies that
        \begin{eqnarray*}
		\textup{grad}^{g}(f)(\tau(u))=\tau_{*}\,\textup{grad}^{\widetilde{g}}(f\circ \tau)(u)=
		\tau_{*}\,\textup{grad}^{\widetilde{g}}(\overline{f}\circ \pi)(u),
	\end{eqnarray*}
	and so, 
	\begin{eqnarray*}
		\|\textup{grad}^{g}(f)(\tau(u))\|^{2}=\|\textup{grad}^{\widetilde{g}}(\overline{f}\circ \pi)(u)\|^{2}.
	\end{eqnarray*}
	It follows from this and \eqref{neknkwnefknk} that 
        \begin{eqnarray*}
		\|\textup{grad}^{g}(f)(\tau(u))\|^{2}=\mathbb{V}_{K(\tau(u))}(X)
	\end{eqnarray*}
	for all $u\in T\mathcal{E}$. Since $\tau:T\mathcal{E}\to N^{\circ}$ is surjective, 
	$\|\textup{grad}^{g}(f)(p)\|^{2}=\mathbb{V}_{K(p)}(X)$ for all $p\in N^{\circ}$. The theorem follows from this and the 
	fact that $N^{\circ}$ is dense in $N$. 
\end{proof}

\subsection{Application I: spectrum and critical values.}

	Given a real-valued function $f$ on $N$, we shall denote by $\textup{Crit}(f)$ the set of critical values of $f$. 
	Thus a real number $\lambda$ is in $\textup{Crit}(f)$ if and only if there is $p\in N$ such that 
	$f(p)=\lambda$ and $f_{*_{p}}=0$. 

\begin{proposition}\label{neknkendkwnkn}
	Let the hypotheses be as in Setting \ref{mnekmnwknefknk}. Suppose that the estimator map $F$ is injective. 
	If $f$ is a K\"{a}hler function on $N$, then 
	\begin{eqnarray}\label{nfeknkenfnk}
		\textup{Crit}(f)\subseteq \textup{spec}(f).
	\end{eqnarray}
\end{proposition}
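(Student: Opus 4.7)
The plan is to reduce the statement to the Cram\'er-Rao equality (Theorem \ref{nceknkenknknk}) combined with the elementary fact that a random variable with zero variance is almost surely constant. Concretely, suppose $\lambda \in \textup{Crit}(f)$, so there is $p \in N$ with $f(p) = \lambda$ and $f_{*_{p}} = 0$. By Theorem \ref{newnknfkenkndvknfk}(a), write $f(q) = \mathbb{E}_{K(\phi(q))}(X)$ for some $X \in \mathcal{A}_{\mathcal{E}}$, $\phi \in \textup{Aut}(N,g)^{0}$ and $K$ the natural extension of a compatible projection. Since $\textup{grad}^{g}(f)(p) = 0$, Theorem \ref{nceknkenknknk} gives
\begin{eqnarray*}
    \mathbb{V}_{K(\phi(p))}(X) = \|\textup{grad}^{g}(f)(p)\|^{2} = 0.
\end{eqnarray*}

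Next I would translate this vanishing of the variance into a statement about the values of $X$ on $\Omega$. Writing out the variance,
\begin{eqnarray*}
    0 = \sum_{k=1}^{m} K(\phi(p))(x_{k}) \bigl( X(x_{k}) - f(p) \bigr)^{2},
\end{eqnarray*}
and since every term is nonnegative, we conclude that $X(x_{k}) = f(p) = \lambda$ whenever $K(\phi(p))(x_{k}) > 0$. As $K(\phi(p))$ is a probability function on $\Omega$, its support is nonempty, so at least one such index $k_{0}$ exists. Consequently $\lambda = X(x_{k_{0}}) \in \{X(x_{1}), \ldots, X(x_{m})\} = \textup{spec}(f)$, which proves \eqref{nfeknkenfnk}.

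No step in this argument looks delicate; the main conceptual content is supplied by the Cram\'er-Rao identity, which packages the geometric condition $df_{p} = 0$ as a probabilistic degeneracy of $X$ under $K(\phi(p))$. The only mild point to check is that $\textup{spec}(f)$ is well-defined (so that the specific choice of $K$, $\phi$, $X$ used above does not matter), but this is exactly the content of the corollary stated just before Definition \ref{nfeknkenkenkf} and relies on the injectivity of the estimator map $F$, which is assumed here.
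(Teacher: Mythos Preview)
Your proof is correct and follows essentially the same line as the paper's: write $f$ via Theorem \ref{newnknfkenkndvknfk}(a), apply the Cram\'er-Rao equality (Theorem \ref{nceknkenknknk}) at a critical point to get zero variance, and conclude that $X$ takes the value $\lambda=f(p)$ on the (nonempty) support of $K(\phi(p))$. The only cosmetic difference is that the paper packages the ``zero variance implies constant on the support'' step as Lemma \ref{nfeknkrnefknk}, whereas you expand the variance sum directly.
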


	Before we carry out the proof, we make a few technical remarks and establish some notation.
	Given a probability function $p:\Omega\to [0,1]$, we will denote by $\textup{supp}(p)$ 
	the set $\{x\in \Omega\,\,\vert\,\,p(x)\neq 0\}$. We shall call $\textup{supp}(p)$ the \textit{support} of $p$. 
	The following result is immediate.

\begin{lemma}\label{nfeknkrnefknk}
	Let $p:\Omega\to [0,1]$ be a probability function and $X:\Omega\to \mathbb{R}$ a random variable. Then 
	$\mathbb{V}_{p}(X)=0$ if and only if $X$ is constant on $\textup{supp}(p)$. 
	In this case, $X(x)=\mathbb{E}_{p}(X)$ for all $x\in \textup{supp}(p)$.
\end{lemma}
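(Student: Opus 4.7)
The plan is to expand the variance as a sum over $\Omega$ of manifestly non-negative terms and invoke the elementary fact that a sum of non-negative reals vanishes if and only if each summand vanishes.

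More precisely, I would start from the definition and write
\begin{equation*}
  \mathbb{V}_p(X) \;=\; \mathbb{E}_p\bigl((X - \mathbb{E}_p(X))^2\bigr) \;=\; \sum_{x \in \Omega} p(x)\bigl(X(x) - \mathbb{E}_p(X)\bigr)^2,
\end{equation*}
in which every summand is non-negative. Hence $\mathbb{V}_p(X) = 0$ if and only if $p(x)\bigl(X(x) - \mathbb{E}_p(X)\bigr)^2 = 0$ for every $x \in \Omega$, which, by definition of $\textup{supp}(p)$, is equivalent to $X(x) = \mathbb{E}_p(X)$ for all $x \in \textup{supp}(p)$. This single computation simultaneously establishes the ``only if'' direction of the equivalence and the ``in this case'' clause identifying the constant value as $\mathbb{E}_p(X)$.

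For the ``if'' direction, I would assume $X$ is constant on $\textup{supp}(p)$, say $X(x) = c$ for all $x \in \textup{supp}(p)$, and then observe that
\begin{equation*}
  \mathbb{E}_p(X) \;=\; \sum_{x \in \Omega} p(x) X(x) \;=\; \sum_{x \in \textup{supp}(p)} p(x)\, c \;=\; c,
\end{equation*}
where the second equality uses that $p(x) = 0$ off the support and the third uses $\sum_{x \in \Omega} p(x) = 1$ together with $p$ vanishing outside $\textup{supp}(p)$. Substituting $\mathbb{E}_p(X) = c$ back into the displayed expansion of $\mathbb{V}_p(X)$ makes every summand vanish, giving $\mathbb{V}_p(X) = 0$.

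The main obstacle: there is essentially none — the lemma is ``immediate'' as the author remarks, and the only minor care needed is to verify that the common value of $X$ on $\textup{supp}(p)$ must in fact coincide with $\mathbb{E}_p(X)$, rather than being an a priori unspecified constant; this is precisely what the short normalization computation above delivers.
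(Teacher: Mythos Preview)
Your proof is correct and is precisely the elementary argument the paper has in mind when it declares the result ``immediate'' without giving a proof. There is nothing to add.
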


\begin{proof}[Proof of Proposition \ref{neknkendkwnkn}]
	Let $f(p)=\mathbb{E}_{K(\phi(p))}(X)$ be a K\"{a}hler function on $N$, where 
	$K$ is the natural extension of some compatible projection, 
	$\phi\in \textup{Aut}(N,g)^{0}$ and $X\in \mathcal{A}_{\mathcal{E}}$. Let $\lambda\in \textup{Crit}(f)$ be arbitrary. 
	By definition, there is $p\in N$ such that $f(p)=\lambda$ and $f_{*_{p}}=0$. The latter condition implies that 
	$\textup{grad}^{g}(f)(p)=0$. By the Cram\'er-Rao equality, $\mathbb{V}_{K(\phi(p))}(X)=0$. 
	By Lemma \ref{nfeknkrnefknk}, $X(x)=\mathbb{E}_{K(\phi(p))}(X)=f(p)=\lambda$ for all $x\in \textup{supp}(K(\phi(p)))$. 
	Thus $\lambda$ is in image of $X$, which means that $\lambda\in \textup{spec}(f)$. 
\end{proof}

	In general, the inclusion \eqref{nfeknkenfnk} is strict, as the following example shows. 

\begin{example}\label{nfeknkenknkefn}
	Suppose that $\mathcal{E}=\mathcal{B}(n)$ and $N=S^{2}\cong \mathbb{P}_{1}(\tfrac{1}{n})$. Let $f$ be a K\"{a}hler function on 
	$S^{2}$. By Proposition \ref{nfekwnkefnknkn}, there are $u\in \mathbb{R}^{3}$ 
	and $r\in \mathbb{R}$ such that $f(p)=\langle u,p\rangle+r$ for all $p\in S^{2}$, where $\langle\,,\, \rangle$ is the Euclidean 
	product on $\mathbb{R}^{3}$. The spectrum of $f$ is the set $\{\lambda_{k}\,\,\vert\,\,k=0,1,...,n\}$, where 
	\begin{eqnarray*}
		\lambda_{k}=\tfrac{2}{n}\|u\|\big(-\tfrac{n}{2}+k\big)+r
	\end{eqnarray*}
	(see \eqref{nekwwnknefk}). Note that $\lambda_{0}=-\|u\|+r$ and $\lambda_{n}=\|u\|+r$. 
	For simplicity, suppose that $u\neq 0.$ Then $f_{*_{p}}=0$ if and only if 
	$p=\pm\tfrac{u}{\|u\|}$. The corresponding critical values are given by $f(\tfrac{u}{\|u\|})=\lambda_{n}$ and 
	$f(-\tfrac{u}{\|u\|})=\lambda_{0}$. Thus $\textup{Crit}(f)=\{\lambda_{0},\lambda_{n}\}$. 
\end{example}

	We end this section with the an important special case. 
%

\begin{proposition}\label{nfeknkrnefknk}
	Let $\mathbb{P}_{n}(1)$ be the complex projection space of holomorphic sectional curvature $c=1$, regarded as a torification 
	of $\mathcal{P}_{n+1}^{\times}$. Then for every K\"{a}hler 
	function $f$ on $\mathbb{P}_{n}(1)$, we have $\textup{Crit}(f)=\textup{spec}(f)$. 
\end{proposition}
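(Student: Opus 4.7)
The plan is to verify the two inclusions separately. The easy direction, $\textup{Crit}(f)\subseteq \textup{spec}(f)$, is already a special case of Proposition \ref{neknkendkwnkn}: for $\mathcal{E}=\mathcal{P}_{n+1}^{\times}$ the estimator map of Example \ref{exa:5.5} sends $x_j$ to the standard basis vector $e_j\in \mathbb{R}^n$ for $j\leq n$ and $x_{n+1}$ to $0$, hence is injective, so the proposition applies verbatim.

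For the converse, $\textup{spec}(f)\subseteq \textup{Crit}(f)$, the strategy is to exhibit, for each $\lambda\in \textup{spec}(f)$, a critical point of $f$ taking the value $\lambda$. Using Theorem \ref{newnknfkenkndvknfk}(a), I would write $f(p)=\mathbb{E}_{K(\phi(p))}(X)$ with $X\in \mathcal{A}_{\mathcal{E}}$, $\phi\in \textup{Aut}(\mathbb{P}_n(1))^{0}$ and $K$ the natural extension of some compatible projection. By Definition \ref{nfeknkenkenkf}, $\textup{spec}(f)$ is the image of $X$, so there is $x_k\in \Omega$ with $X(x_k)=\lambda$. If I can find $p$ such that $K(\phi(p))=\delta_{x_k}$, then $f(p)=\mathbb{E}_{\delta_{x_k}}(X)=\lambda$, and the Cram\'er--Rao equality (Theorem \ref{nceknkenknknk}) gives
\[
\|\textup{grad}^{g}(f)(p)\|^{2}=\mathbb{V}_{K(\phi(p))}(X)=\mathbb{V}_{\delta_{x_k}}(X)=0,
\]
so $p$ is a critical point with critical value $\lambda$, as required.

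The construction of $p$ exploits the geometry of the torus action. The fixed points of $\Phi_{n}$ on $\mathbb{P}_n(1)$ are precisely the $n+1$ points $[e_1],\ldots,[e_{n+1}]$, which is a short direct verification from formula \eqref{neknknknknfdkn}; for the canonical compatible projection of Proposition \ref{njewndknknknk}(a), the extension \eqref{nfewnkefnwkk} satisfies $K_{\textup{can}}([e_j])=\delta_{x_j}$. For an arbitrary $K$, Lemma \ref{nfeknkrenknfk} says $K$ and $K_{\textup{can}}$ differ only by a permutation of $\Omega$, so the set $\{K([e_1]),\ldots,K([e_{n+1}])\}$ is again $\{\delta_{x_1},\ldots,\delta_{x_{n+1}}\}$; in particular there is $j$ with $K([e_j])=\delta_{x_k}$. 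Setting $p:=\phi^{-1}([e_j])$ completes the construction.

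The computational heart of the argument is the Cram\'er--Rao equality combined with the observation that torus fixed points are sent by $K$ to Dirac masses. The main (mild) obstacle is the bookkeeping between compatible projections, for which Lemma \ref{nfeknkrenknfk} is the key tool; once that is in hand the argument is entirely formal.
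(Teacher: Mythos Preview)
Your proof is correct and follows the same overall strategy as the paper: one inclusion via Proposition~\ref{neknkendkwnkn}, and for the reverse, find $p$ with $K(\phi(p))=\delta_{x_k}$ and apply the Cram\'er--Rao equality. The only difference is that where you explicitly locate preimages of Dirac masses via the torus fixed points $[e_j]$ and Lemma~\ref{nfeknkrenknfk}, the paper simply invokes the surjectivity of $K:\mathbb{P}_n(1)\to\overline{\mathcal{P}_{n+1}^{\times}}=\mathcal{P}_{n+1}$ (established in Section~\ref{ndknvfknkfnvknk}) together with the bijectivity of $\phi$ to obtain such a $p$ in one line.
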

\begin{proof}
	In view of Proposition \ref{neknkendkwnkn}, we need only show that every $\lambda$ in $\textup{spec}(f)$ is a critical value of $f$.  
	So let $\lambda\in \textup{spec}(f)$ be arbitrary. Suppose that $f(p)=\mathbb{E}_{K(\phi(p))}(X)$ for all $p\in \mathbb{P}_{n}(1)$, 
	where $K$ is the natural extension of some compatible projection, $\phi\in \textup{Aut}(\mathbb{P}_{n}(1))$ and $X$ 
	is a random variable on $\Omega$. By definition of $\textup{spec}(f)$, there is $x\in \Omega$ such that $\lambda=X(x)$. 
	Let $\delta_{x}:\Omega\to \mathbb{R}$ be the probability function defined by 
	$\delta_{x}(y)=1$ if $x=y$, 0 otherwise. Then $\lambda=\mathbb{E}_{\delta_{x}}(X)$. Since 
	$K:\mathbb{P}_{n}(1)\to \overline{\mathcal{P}_{n+1}^{\times}}=\mathcal{P}_{n+1}$ is surjective, 
	there is $p\in \mathbb{P}_{n}(1)$ such that $K(\phi(p))=\delta_{x}$, and so
	$\lambda=\mathbb{E}_{K(\phi(p))}(X)=f(p)$. Moreover, it follows from the Cram\'er-Rao equality and Lemma \ref{nfeknkrnefknk} that 
	$\|\textup{grad}(f)(p)\|^{2}=\mathbb{V}_{K(\phi(p))}(X)=\mathbb{V}_{\delta_{x}}(X)=0$, which shows that 
	$f_{*_{p}}=0$. Therefore $\lambda$ is a critical value of $f$. 
\end{proof}


\subsection{Application II: fixed points and Dirac functions.}
	
	In this section, we turn our attention to the relationship between the fixed points of the torus action and 
	the Dirac functions in $\overline{\mathcal{E}}$. 
	Given $p\in N$, we will use $G_{p}$ to denote the isotropy subgroup of $p$ for the torus action, that is, 
	$G_{p}=\{a\in \mathbb{T}^{n}\,\,\vert\,\,\Phi(a,p)=p\}$. We recall that $G_{p}$ is a closed Lie subgroup of 
	$\mathbb{T}^{n}$ whose Lie algebra is the set of $\xi\in \mathbb{R}^{n}=\textup{Lie}(\mathbb{T}^{n})$ 
	satisfying $\xi_{N}(p)=0$, where $\xi_{N}$ is the fundamental vector field on $N$ associated to $\xi$. By definition, 
	a point $p\in N$ is a \textit{fixed point} if $G_{p}=\mathbb{T}^{n}$. 

\begin{lemma}\label{nfeknkfenknkn}
	Let $Z:\mathbb{R}^{n}\oplus \mathbb{R}\to \mathcal{A}_{\mathcal{E}}$ be the kernel of the momentum map $\moment:N\to \mathbb{R}^{n}$ 
	relative to $K$, and let $Z_{0}:\mathbb{R}^{n}\to \mathcal{A}_{\mathcal{E}}$ be the map defined by $Z_{0}(\xi)=Z(\xi,0)$.
	If $\xi \in \mathbb{R}^{n}$ and $p,q\in N$, then 
	\begin{enumerate}[(a)]
	\item $\mathbb{V}_{K(p)}(Z_{0}(\xi))=\|\xi_{N}(p)\|^{2}$.
	\item $\textup{Lie}(G_{p})=\{\xi\in \mathbb{R}^{n}\,\,\vert\,\,Z_{0}(\xi)\,\,\textup{is constant on}\,\,\textup{supp}(K(p))\}$. 
	\item If $\textup{supp}(K(p))=\textup{supp}(K(q))$, then $G_{p}=G_{q}$. 
	\end{enumerate}
\end{lemma}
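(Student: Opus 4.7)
The plan is to reduce all three assertions to the Cram\'er-Rao equality (Theorem \ref{nceknkenknknk}) applied to the coordinate functions of $\moment$ itself. Parts (b) and (c) should then be short corollaries of (a).

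For (a), since $Z$ is the kernel of $\moment$ relative to $K$, Proposition \ref{nfeknekfnkn} with $r=0$ gives $\moment^{\xi}(p)=\mathbb{E}_{K(p)}(Z_{0}(\xi))$ for every $\xi\in \mathbb{R}^{n}$ and every $p\in N$; so $\moment^{\xi}$ is exactly of the form required by Theorem \ref{nceknkenknknk}, with $\phi=\textup{Id}$ and $X=Z_{0}(\xi)$. The Cram\'er-Rao equality then gives
\[
\mathbb{V}_{K(p)}(Z_{0}(\xi))=\|\textup{grad}^{g}(\moment^{\xi})(p)\|^{2}.
\]
It remains to identify $\|\textup{grad}^{g}(\moment^{\xi})\|$ with $\|\xi_{N}\|$, which is the standard K\"{a}hler identity: by the defining relation $\omega(\xi_{N},\cdot)=d\moment^{\xi}(\cdot)$ the Hamiltonian vector field of $\moment^{\xi}$ is $\xi_{N}$, and on a K\"{a}hler manifold this Hamiltonian vector field is related to the Riemannian gradient by $\xi_{N}=\pm J\,\textup{grad}^{g}(\moment^{\xi})$, so the two fields have equal pointwise norms since $J$ is a $g$-isometry.

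For (b), the chain of equivalences is immediate once (a) is established: $\xi\in\textup{Lie}(G_{p})$ means by definition $\xi_{N}(p)=0$, which by (a) is equivalent to $\mathbb{V}_{K(p)}(Z_{0}(\xi))=0$, and Lemma \ref{nfeknkrnefknk} reformulates the vanishing of the variance as the statement that $Z_{0}(\xi)$ is constant on $\textup{supp}(K(p))$.

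For (c), the hypothesis combined with (b) immediately yields the equality of Lie algebras $\textup{Lie}(G_{p})=\textup{Lie}(G_{q})$. The one non-formal step is promoting this to the equality $G_{p}=G_{q}$ of closed subgroups of $\mathbb{T}^{n}$. For this I would invoke the well-known consequence of the Marle--Guillemin--Sternberg local normal form for Hamiltonian torus actions, namely that on a symplectic toric manifold the isotropy group at every point is a connected subtorus; once connectedness is in hand, a closed connected subgroup of $\mathbb{T}^{n}$ is determined by its Lie algebra, and (c) follows. I expect this last step to be the only place where input from outside the lemma is needed; everything else is direct bookkeeping around the Cram\'er-Rao equality.
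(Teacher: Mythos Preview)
Your proposal is correct and follows essentially the same approach as the paper: part (a) is obtained by applying the Cram\'er-Rao equality to $\moment^{\xi}=\mathbb{E}_{K(\cdot)}(Z_{0}(\xi))$ together with the K\"{a}hler identity $\xi_{N}=-J\,\textup{grad}^{g}(\moment^{\xi})$, part (b) is the combination of (a) with Lemma~\ref{nfeknkrnefknk}, and part (c) uses (b) plus connectedness of the isotropy subgroups on a symplectic toric manifold. The only cosmetic difference is that the paper cites Delzant's Lemme~2.2 directly for the connectedness of isotropy subgroups, whereas you invoke the local normal form; these are the same fact.
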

\begin{proof}
	Before we get started, let us establish some notation. 
	Given a smooth function $f:N\to \mathbb{R}$, we will denote by $X_{f}$ and $\textup{grad}^{g}(f)$ the Hamiltonian vector 
	field and Riemannian gradient of $f$, respectively. Since $N$ is a K\"{a}hler manifold, we have 
	$X_{f}=-J\,\textup{grad}^{g}(f)$, where $J:TN\to TN$ is the complex structure. 

	(a) If $f=\moment^{\xi}$, where $\xi\in \mathbb{R}^{n}$, then $X_{f}=\xi_{N}$, and so $\xi_{N}=-J\,\textup{grad}^{g}(\moment^{\xi})$.
	Taking the norm, we get $\|\xi_{N}(p)\|=\|\textup{grad}^{g}(\moment^{\xi})(p)\|$
	for all $p\in N$, where we have use the fact that $J$ is an isometry on each tangent space $T_{p}N$. 
	It follows from this and the Cram\'er-Rao equality that 
	$\mathbb{V}_{K(p)}(Z_{0}(\xi))=\|\xi_{N}(p)\|^{2}$. This shows (a). 

	(b) This follows from (a) and Lemma \ref{nfeknkrnefknk}. 

	(c) If $\textup{supp}(K(p))=\textup{supp}(K(q))$, then (b) implies that $\textup{Lie}(G_{p})=\textup{Lie}(G_{q})$. 
	Because $N$ is a symplectic toric manifold, every isotropy subgroup for the torus action 
	is connected (see \cite{Delzant}, Lemme 2.2). Thus $G_{p}$ and $G_{q}$ are connected Lie subgroups of $\mathbb{T}^{n}$ 
	with the same Lie algebra, and thus they are equal. 
\end{proof}

	We shall say that a probability function $p:\Omega\to [0,1]$ is a \textit{Dirac function} if $\textup{supp}(p)=\{x\}$ 
	for some $x\in \Omega$. In this case, we write $p=\delta_{x}$. (Thus $\delta_{x}(y)=1$ if $x=y$, $0$ otherwise.)

\begin{proposition}[\textbf{Fixed points and Dirac functions}]\label{ndknkefnknk}
	Let the hypotheses be as in Setting \ref{mnekmnwknefknk}. 
	Let $K$ be the natural extension of some compatible projection. Assume that 
	$\mathcal{A}_{\mathcal{E}}$ separates the points of $\Omega$. Given $p\in N$, the following conditions are equivalent:
	\begin{enumerate}[(a)]
	\item $p$ is a fixed point for the torus action.
	\item $K(p)=\delta_{x}$ for some $x\in \Omega.$
	\end{enumerate}
\end{proposition}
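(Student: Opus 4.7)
The plan is to pivot on Lemma \ref{nfeknkfenknkn}(b), which already translates the condition ``$p$ is a fixed point'' into a statement about functions in $\mathcal{A}_{\mathcal{E}}$ being constant on $\textup{supp}(K(p))$.

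\textbf{Preliminary observation.} First I would record that the constant function $1$ belongs to the image of $Z$. Indeed, applying the defining identity \eqref{jnekjneknfknk} with $\xi=0$ and $r=1$ gives $\mathbb{E}_{K(p)}(Z(0,1))=1$ for every $p\in N$. Since the constant function $\mathbf{1}\in\mathcal{A}_{\mathcal{E}}$ trivially satisfies this same identity, the injectivity part of Lemma \ref{newnkfenkwnk} (and thus of the kernel $Z$) forces $Z(0,1)=\mathbf{1}$. Consequently $\mathcal{A}_{\mathcal{E}}=Z_{0}(\mathbb{R}^{n})+\mathbb{R}\cdot\mathbf{1}$.

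\textbf{Implication $(b)\Rightarrow(a)$.} Suppose $K(p)=\delta_{x}$, so that $\textup{supp}(K(p))=\{x\}$ is a singleton. Every real-valued function on $\Omega$ is trivially constant on a singleton, so for every $\xi\in\mathbb{R}^{n}$, $Z_{0}(\xi)$ is constant on $\textup{supp}(K(p))$. Lemma \ref{nfeknkfenknkn}(b) then yields $\textup{Lie}(G_{p})=\mathbb{R}^{n}$. Since $N$ is a symplectic toric manifold, the isotropy subgroup $G_{p}$ is connected (this is the fact from \cite{Delzant} invoked in the proof of Lemma \ref{nfeknkfenknkn}(c)), hence $G_{p}$ is the unique connected Lie subgroup of $\mathbb{T}^{n}$ with Lie algebra $\mathbb{R}^{n}$, namely $G_{p}=\mathbb{T}^{n}$. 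Thus $p$ is a fixed point.

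\textbf{Implication $(a)\Rightarrow(b)$.} Conversely, suppose $p$ is a fixed point, so that $\textup{Lie}(G_{p})=\mathbb{R}^{n}$. By Lemma \ref{nfeknkfenknkn}(b), every $Z_{0}(\xi)$, $\xi\in\mathbb{R}^{n}$, is constant on $S:=\textup{supp}(K(p))$. Combined with the preliminary observation that constants lie in $\mathcal{A}_{\mathcal{E}}$, we get that \emph{every} function in $\mathcal{A}_{\mathcal{E}}=Z_{0}(\mathbb{R}^{n})+\mathbb{R}\cdot\mathbf{1}$ is constant on $S$. Since $\mathcal{A}_{\mathcal{E}}$ separates the points of $\Omega$ by hypothesis, $S$ must be a singleton $\{x\}$, i.e.\ $K(p)=\delta_{x}$.

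No step here looks delicate in itself; the real work has been done in Lemma \ref{nfeknkfenknkn} (via the Cram\'er-Rao equality) and in the identification of the constant function with $Z(0,1)$. The only mild subtlety is remembering to invoke connectedness of isotropy subgroups of a symplectic toric manifold in order to conclude $G_{p}=\mathbb{T}^{n}$ from the equality of Lie algebras.
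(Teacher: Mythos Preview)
Your proof is correct and follows essentially the same line as the paper's: both directions pivot on Lemma \ref{nfeknkfenknkn} together with the connectedness of isotropy subgroups from \cite{Delzant}, and the separation hypothesis on $\mathcal{A}_{\mathcal{E}}$ is used exactly as in the paper to force the support to be a singleton. The only cosmetic differences are that for $(b)\Rightarrow(a)$ you invoke part (b) of Lemma \ref{nfeknkfenknkn} directly rather than unwinding via part (a) and Lemma \ref{nfeknkrnefknk}, and for $(a)\Rightarrow(b)$ you isolate $Z(0,1)=\mathbf{1}$ explicitly whereas the paper cites the formula $Z(\xi,r)=Z(\xi,0)+r$ from Proposition \ref{nfeknekfnkn}; these amount to the same thing.
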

\begin{proof}
	$(a)\Rightarrow (b)$ Suppose that $p\in N$ is a fixed point. Thus $G_{p}=\mathbb{T}^{n}$ and $\textup{Lie}(G_{p})
	=\mathbb{R}^{n}$. By Lemma \ref{nfeknkfenknkn}(b), $Z_{0}(\xi)=Z(\xi,0)$ is constant 
	on $\textup{supp}(K(p))$ for all $\xi\in \mathbb{R}^{n}$. Recall that if $r\in \mathbb{R}$, then $Z(\xi,0)+r=Z(\xi,r)$ (see 
	Proposition \ref{nfeknekfnkn}). 
	Thus $Z(\xi,r)$ is constant on $\textup{supp}(K(p))$ for all $(\xi,r)\in \mathbb{R}^{n}\oplus \mathbb{R}$. 
	Since the image of $Z$ is $\mathcal{A}_{\mathcal{E}}$, 
	we obtain that every $X$ in $\mathcal{A}_{\mathcal{E}}$ is constant on $\textup{supp}(K(p))$. Because $\mathcal{A}_{\mathcal{E}}$ 
	separates the points of $\Omega$, this forces $\textup{supp}(K(p))$ to be a singleton, say $\{x\}$. 
	Thus $K(p)=\delta_{x}$. 

	$(b)\Rightarrow (a)$ Suppose that $K(p)=\delta_{x}$ for some $x\in \Omega.$ By Lemmas \ref{nfeknkrnefknk} and 
	\ref{nfeknkfenknkn}(a), we have $\|\xi_{N}(p)\|=0$ for all $\xi\in \mathbb{R}^{n}$, which means that the Lie algebra of $G_{p}$ 
	coincides with the Lie algebra of $\mathbb{T}^{n}$. Since $G_{p}$ is connected (\cite{Delzant}, Lemme 2.2), this implies 
	$G_{p}=\mathbb{T}^{n}$, and shows that $p$ is a fixed point.
\end{proof}

\begin{example}
	Suppose that $N=\mathbb{P}_{n}(1)$ and $\mathcal{E}=\mathcal{P}_{n+1}^{\times}$. Let $\Phi$ and $K$ 
	be as in Example \ref{neknkenfkn}. 
	Then the fixed points for the torus action are $p_{1}=[1,0,...,0]$,..., $p_{n+1}=[0,...,0,1]$, and we have 
	$K(p_{j})=\delta_{x_{j}}$ for all $j\in\{1,...,n+1\}$. 
\end{example}

\appendix 

\section{Transformation groups}\label{nknkvnkfnkfnk}
	In this section, we review some basic results on transformation groups. 
	Our purpose is to get a basic understanding of the Lie group structure of the group of holomorphic isometries of a 
	connected K\"{a}hler manifold. The presentation is as self-contained as possible. For more advanced reading, 
	we refer to \cite{Kobayashi-transformation} and \cite{palais}.

	Let $\Phi:G\times M\to M$ be a Lie group action of a Lie group $G$ on a manifold $M$. 
	Let $\mathfrak{g}=\textup{Lie}(G)$ be the Lie algebra of $G$. Recall that the 
	\textit{fundamental vector field} of $\xi\in \mathfrak{g}$, denoted by $\xi_{M}$, is the vector field on $M$ which is defined 
	by $\xi_{M}(p)=\tfrac{d}{dt}\,\big\vert_{0}\,\Phi(\textup{exp}(t\xi),p)$,
	where $p\in M$ and $\textup{exp}\,:\,\mathfrak{g}\rightarrow G$ is the standard exponential map. 
	If $\mathfrak{X}(M)$ denotes the Lie algebra of vector fields on $M$ 
	endowed with the usual Lie bracket, then the map $f:\mathfrak{g}\to \mathfrak{X}(M)$, defined by $f(\xi)=\xi_{M}$, is an 
	anti-homomorphism of Lie algebras, that is, it is a linear map satisfying 
	\begin{eqnarray}\label{nfeknwkefnkwnk}
		f([\xi,\eta])=-[f(\xi),f(\eta)]
	\end{eqnarray}
	for all $\xi,\eta\in \mathfrak{g}$ (see, e.g., \cite{Ortega}). 

	Given $g\in G$, let $\Phi_{g}:M\to M$, $p\mapsto \Phi(g,p)$. Recall that $\Phi$ is \textit{effective} 
	if $\Phi_{g}=\textup{Id}_{M}$ implies $g=e$, 
	where $e$ is the identity element $G$. Equivalently, $\Phi$ is effective if $\cap_{p\in M}G_{p}=\{e\}$, where 
	$G_{p}=\{g\in G\,\,\vert\,\,\Phi(g,p)=p\}$ is the 
	isotropy subgroup of $p\in M$.

\begin{lemma}\label{wkjkjkefjwkjk}
	Let $\Phi:G\times M\to M$ be a Lie group action of a Lie group $G$ on a manifold $M$. 
	If $\Phi$ is effective, then the map $\textup{Lie}(G)\to \mathfrak{X}(M),$ 
	$\xi\mapsto \xi_{M}$, is injective. 
\end{lemma}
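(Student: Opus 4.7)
The plan is to exploit the standard relationship between a fundamental vector field and its flow: the one-parameter group of diffeomorphisms generated by $\xi_M$ is precisely $t \mapsto \Phi_{\exp(t\xi)}$. This is immediate from the definition $\xi_M(p) = \tfrac{d}{dt}\big\vert_{0} \Phi(\exp(t\xi),p)$ combined with the fact that $t\mapsto \exp(t\xi)$ is a one-parameter subgroup of $G$, so that $\Phi(\exp((t+s)\xi),p) = \Phi(\exp(t\xi), \Phi(\exp(s\xi),p))$.

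Concretely, I would argue as follows. Suppose $\xi \in \textup{Lie}(G)$ satisfies $\xi_M \equiv 0$ on $M$. Since the flow of the zero vector field is the identity on $M$, the observation above gives $\Phi_{\exp(t\xi)} = \textup{Id}_M$ for every $t \in \mathbb{R}$. By effectiveness of $\Phi$, this forces $\exp(t\xi) = e$ for every $t \in \mathbb{R}$. Differentiating at $t=0$ yields $\xi = 0$. This shows that the kernel of the linear map $\xi \mapsto \xi_M$ is trivial, which is injectivity.

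The only delicate point is justifying that $t \mapsto \Phi_{\exp(t\xi)}$ is indeed the flow of $\xi_M$; once this is in hand the rest is one line. I would include a brief verification of this from the one-parameter subgroup property, since it is the only nontrivial ingredient. No obstacle of substance is expected.
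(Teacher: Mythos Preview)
Your proposal is correct and follows essentially the same approach as the paper: both assume $\xi_M\equiv 0$, deduce that $\exp(t\xi)$ acts as the identity on $M$ (you via the flow of $\xi_M$, the paper via the isotropy-subgroup Lie algebras), invoke effectiveness to get $\exp(t\xi)=e$ for all $t$, and differentiate at $t=0$.
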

\begin{proof}
	Let $\xi\in \mathfrak{g}=\textup{Lie}(G)$ be such that $\xi_{M}\equiv 0$. Thus $\xi_{M}(p)=0$ for all $p\in M$, which means 
	that $\xi$ is in the Lie algebra of $G_{p}$ for all $p\in M$. Because $G_{p}$ is a Lie subgroup 
	of $G$, its Lie algebra coincides with the set $\{\eta\in \mathfrak{g}\,\,\vert\,\,\textup{exp}(t\eta)\in G_{p}\,\,\forall\,
	t\in \mathbb{R}\}$. Thus $\textup{exp}(t\xi)\in \cap_{p\in M}G_{p}=\{e\}$ for 
	all $t\in \mathbb{R}$. It follows that 
	$\textup{exp}(t\xi)=e$ for all $t\in \mathbb{R}$. Taking the derivative with respect to $t$ at $0$ yields $\xi=0.$
	The lemma follows. 
\end{proof}

	The following lemma is a direct consequence of Lemma \ref{wkjkjkefjwkjk} and \eqref{nfeknwkefnkwnk}. 

\begin{lemma}\label{nfdnkneknwkdnfkn}
	Let $\Phi:G\times M\to M$ be an effective Lie group action of a Lie group $G$ on a manifold $M$. 
	Let $A=\{\xi_{M}\,\,\vert\,\,\xi\in \textup{Lie}(M)\}\subset \mathfrak{X}(M)$, 
	endowed with minus the usual Lie bracket of vector fields. Then the map $\textup{Lie}(G)\to A$, $\xi\mapsto \xi_{M}$ 
	is a Lie algebra isomorphism. 
\end{lemma}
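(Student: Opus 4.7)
The plan is to assemble this directly from the two ingredients the paper has already established, treating the statement as essentially a bookkeeping exercise. The map $f:\textup{Lie}(G)\to\mathfrak{X}(M)$ given by $\xi\mapsto \xi_M$ is linear, and equation \eqref{nfeknwkefnkwnk} asserts that it is an anti-homomorphism for the standard Lie bracket on $\mathfrak{X}(M)$, i.e.\ $f([\xi,\eta])=-[f(\xi),f(\eta)]$. Replacing the bracket on $\mathfrak{X}(M)$ by its negative, this identity reads $f([\xi,\eta])=[f(\xi),f(\eta)]_{-}$, so $f$ becomes an honest Lie algebra homomorphism from $(\textup{Lie}(G),[\,,\,])$ into $(\mathfrak{X}(M),[\,,\,]_{-})$.

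Next, I would restrict the codomain to the image $A=\{\xi_M\mid \xi\in\textup{Lie}(G)\}$, which is by construction a linear subspace of $\mathfrak{X}(M)$ and, by the homomorphism property just recorded, closed under $[\,,\,]_{-}$. Hence $A$ is a Lie subalgebra of $(\mathfrak{X}(M),[\,,\,]_{-})$, and the corestricted map $\textup{Lie}(G)\to A$, $\xi\mapsto \xi_M$, is a surjective Lie algebra homomorphism.

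Finally, injectivity is supplied by Lemma \ref{wkjkjkefjwkjk}: since $\Phi$ is effective, $\xi_M\equiv 0$ forces $\xi=0$. Combining surjectivity onto $A$, injectivity, and the homomorphism property gives that $\xi\mapsto \xi_M$ is a Lie algebra isomorphism from $\textup{Lie}(G)$ onto $A$, which is exactly the claim.

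There is essentially no obstacle here; the only thing one has to be slightly careful about is the sign convention, namely remembering that the map is an \emph{anti}-homomorphism for the ordinary bracket on $\mathfrak{X}(M)$, which is precisely why $A$ is equipped with minus the usual bracket in the statement. Once that convention is fixed, the proof is a one-line assembly of the two earlier results.
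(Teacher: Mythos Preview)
Your proposal is correct and matches the paper's approach exactly: the paper simply states that the lemma is a direct consequence of Lemma \ref{wkjkjkefjwkjk} and equation \eqref{nfeknwkefnkwnk}, which is precisely the assembly you have spelled out.
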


	Now we specialize to the case in which $G$ is a subgroup of the group $\textup{Diff}(M)$ of diffeomorphims of the manifold $M$. 
	More precisely:

\begin{setting}\label{ncekwndeknknk}
	Let $M$ be a manifold and $G$ a subgroup of $\textup{Diff}(M)$. We assume that:
	\begin{itemize}
	\item There is a smooth structure that makes $G$ into a Lie group. 
	\item The topology of $G$ is the compact-open topology (see below). 
	\item The natural action of $G$ on $M$ is smooth. (By ``natural action" we mean the evaluation map $G\times M\to M$, 
		$(g,p)\mapsto g(p)$.)
	\end{itemize}
\end{setting}	

	We continue to denote by $\mathfrak{g}$ the Lie algebra of $G$ and by $\textup{exp}:\mathfrak{g}\to G$ the 
	corresponding exponential map. If $\xi\in \mathfrak{g}$, then $\xi_{M}$ is the fundamental vector 
	field of $\xi$ associated to the natural action of $G$ on $M$. 

\begin{remark}\label{nfekwnkenfkwndkn}
	Let $G\subset \textup{Diff}(M)$ be as in Setting \ref{ncekwndeknknk}.
	\begin{enumerate}[(a)]
	\item If $K$ and $H$ are Lie groups and $f:K\to H$ is a continuous group homomorphism, then $f$ is automatically 
	smooth (see, e.g., \cite{Lee}, Problem 20-11).
	An immediate consequence is that if $K$ is a topological group, that is, a group endowed with a topology such that the 
	multiplication and inversion maps are continuous, 
	then there is only one smooth structure that makes $K$ into a Lie group. 
	It follows from this that the smooth structure on $G$ is unique. 
	\item The natural action of $G$ on $M$ is effective.
	\end{enumerate}
\end{remark}
	Given a complete vector field $X$ on $M$, we will denote by $\varphi^{X}:\mathbb{R}\times M\to M$ the flow of $X$. 
	Given $t\in \mathbb{R}$, we use $\varphi_{t}^{X}$ to denote the map $M\to M$, $p\mapsto \varphi^{X}(t,p)$. 
\begin{lemma}\label{nknenfkwnkfnk}
	Let $G\subset \textup{Diff}(M)$ be as in Setting \ref{ncekwndeknknk}. If $\xi\in \mathfrak{g}$, then 
	$\textup{exp}(t\xi)=\varphi_{t}^{\xi_{M}}$ for all $t\in \mathbb{R}$. 
\end{lemma}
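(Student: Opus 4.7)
The plan is to show that for fixed $\xi \in \mathfrak{g}$ and $p \in M$, the curve $\gamma_p : \mathbb{R} \to M$ defined by $\gamma_p(t) := \textup{exp}(t\xi)(p)$ is an integral curve of the fundamental vector field $\xi_M$ starting at $p$. Since $\gamma_p$ is defined for all $t \in \mathbb{R}$, this will simultaneously give completeness of $\xi_M$ and identify its flow.

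First I would verify that $\gamma_p$ is smooth: the map $\mathbb{R} \to G$, $t \mapsto \textup{exp}(t\xi)$ is smooth by the definition of the exponential map on a Lie group, and the natural evaluation action $G \times M \to M$ is smooth by Setting \ref{ncekwndeknknk}, so $\gamma_p$ is the composition of smooth maps. Next, I would compute the tangent vector $\dot\gamma_p(s)$ at an arbitrary time $s$ using the one-parameter subgroup property $\textup{exp}((s+u)\xi) = \textup{exp}(u\xi)\cdot\textup{exp}(s\xi)$:
\begin{eqnarray*}
\dot\gamma_p(s) \;=\; \dfrac{d}{du}\bigg\vert_{0}\textup{exp}((s+u)\xi)(p) \;=\; \dfrac{d}{du}\bigg\vert_{0}\textup{exp}(u\xi)\big(\textup{exp}(s\xi)(p)\big),
\end{eqnarray*}
where I have used that the natural action is a \emph{left} action, so composition of group elements corresponds to composition of diffeomorphisms in the correct order. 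By the very definition of the fundamental vector field recalled at the top of this appendix, the right-hand side equals $\xi_M(\textup{exp}(s\xi)(p)) = \xi_M(\gamma_p(s))$. Hence $\gamma_p$ satisfies the ODE $\dot\gamma_p = \xi_M \circ \gamma_p$ with initial condition $\gamma_p(0) = p$, so it is an integral curve of $\xi_M$ through $p$.

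Finally, by the standard uniqueness theorem for integral curves of smooth vector fields, any integral curve of $\xi_M$ through $p$ agrees with $\gamma_p$ on the intersection of their domains. Since $\gamma_p$ is defined for all $t \in \mathbb{R}$, and this holds for every $p \in M$, we conclude that $\xi_M$ is complete and that the maximal flow $\varphi^{\xi_M}_t$ coincides pointwise with $\textup{exp}(t\xi)$ on all of $M$, i.e.\ $\textup{exp}(t\xi) = \varphi^{\xi_M}_t$ as diffeomorphisms of $M$ for every $t \in \mathbb{R}$.

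There is no real obstacle here; the proof is essentially a bookkeeping exercise. The only small thing to be careful about is the left-versus-right convention in expanding $\textup{exp}((s+u)\xi)$, which must match the convention that the natural action $G \times M \to M$ of a group of diffeomorphisms is a left action, so that the derivative at $u=0$ genuinely picks up the fundamental vector field evaluated at the translated base point $\textup{exp}(s\xi)(p)$ rather than at $p$ pushed forward by $\textup{exp}(s\xi)$.
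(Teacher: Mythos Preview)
Your proof is correct and follows essentially the same approach as the paper. The paper's proof simply invokes the standard fact that the flow of the fundamental vector field $\xi_M$ is $\varphi^{\xi_M}_t(p)=\Phi(\textup{exp}(t\xi),p)$ and then notes that $\Phi$ is evaluation; you unpack this fact explicitly by verifying the integral-curve ODE via the one-parameter subgroup identity, which is exactly what underlies the cited fact.
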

\begin{proof}
	For simplicity, let $\Phi$ denote the natural action of $G$ on $M$. Since $\Phi$ is smooth by hypothesis, 
	the flow of $\xi_{M}$ is given by $\varphi^{\xi_{M}}_{t}(p)=
	\Phi(\textup{exp}(t\xi),p)$. Since $\Phi$ is the evaluation map, $\Phi(\textup{exp}(t\xi),p)=\textup{exp}(t\xi)(p)$. Thus 
	$\varphi^{\xi_{M}}_{t}(p)=\textup{exp}(t\xi)(p)$. Since this holds for every $p\in M$, $\varphi^{\xi_{M}}_{t}=\textup{exp}(t\xi)$.  
\end{proof}

	Before we proceed with the properties of $G$, let us establish some notation. Given two topological spaces $X$ and $Y$, 
	we will denote by $C(X,Y)$ the set of continuous maps from $X$ to $Y$. 

\begin{definition}[\textbf{Compact-open topology}]
	Let $X$ and $Y$ be two topological spaces and $F\subseteq C(X,Y)$. Given a compact subset $K$ of $X$ and an open set $U\subseteq Y$, 
	let $W(K,U):=\{f\in F\,\big\vert\,f(K)\subseteq U\}$. The familly of all sets of the form $W(K,U)$ is a subbases for a topology on 
	$F$ called \textit{compact-open topology}. 
\end{definition}

	Note that if $F\subseteq C(X,Y)$, then the compact-open topology on $F$ coincides with the relative topology induced from 
	the compact-open topology on $C(X,Y)$. 

	It is well-known that if $X$ is a topological space and $(Y,d)$ is a metric space, 
	then a sequence $(f_{n})_{n\in \mathbb{N}}$ of continuous maps $f_{n}\in C(X,Y)$ 
	converges to $g\in C(X,Y)$ with respect to the compact-open topology if and only if $(f_{n})_{n\in \mathbb{N}}$ 
	converges uniformly on compact sets to $g$, that is, 
	if for every $\varepsilon>0$ and every compact set $K\subset X$, there exists $N\in \mathbb{N}$ such that $n\geq N$ 
	implies $d(f_{n}(x),g(x))<\varepsilon$ for all $x\in K$ 
	(see, e.g., \cite{Willard}). Because of this, the compact-open topology is also called the \textit{topology of 
	compact convergence}.

\begin{lemma}\label{nfekwnkenknkndk}
	Let $G\subset \textup{Diff}(M)$ be as in Setting \ref{ncekwndeknknk}. Let $X$ be a vector field on $M$. The following are equivalent:
	\begin{enumerate}[(1)]
		\item There exists $\xi\in \mathfrak{g}$ such that $X=\xi_{M}$.
		\item $X$ is complete and $\varphi_{t}^{X}\in G$ for all $t\in \mathbb{R}$. 
	\end{enumerate}
\end{lemma}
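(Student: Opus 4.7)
The plan is to prove the two implications separately. The direction $(1)\Rightarrow(2)$ is immediate: if $X=\xi_{M}$ for some $\xi\in\mathfrak{g}$, then Lemma \ref{nknenfkwnkfnk} gives $\varphi_{t}^{X}=\varphi_{t}^{\xi_{M}}=\textup{exp}(t\xi)$ for all $t\in\mathbb{R}$. Since $\textup{exp}(t\xi)$ is defined for every real $t$ and lies in $G$, the vector field $X$ is complete and its flow takes values in $G$.

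For the harder direction $(2)\Rightarrow(1)$, I would define the map $c:\mathbb{R}\to G$ by $c(t):=\varphi_{t}^{X}$, which by hypothesis is well-defined. The flow identity $\varphi_{t+s}^{X}=\varphi_{t}^{X}\circ\varphi_{s}^{X}$ says precisely that $c$ is a group homomorphism from $(\mathbb{R},+)$ to $G$. If I can show that $c$ is continuous, then Remark \ref{nfekwnkenfkwndkn}(a) promotes $c$ to a smooth homomorphism, hence a one-parameter subgroup of $G$; there exists $\xi\in\mathfrak{g}$ with $c(t)=\textup{exp}(t\xi)$ for all $t$. Invoking Lemma \ref{nknenfkwnkfnk} again gives $\varphi_{t}^{X}=c(t)=\textup{exp}(t\xi)=\varphi_{t}^{\xi_{M}}$ for every $t\in\mathbb{R}$, and differentiating at $t=0$ yields $X=\xi_{M}$.

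The main obstacle is the continuity of $c:\mathbb{R}\to G$ with respect to the compact-open topology on $G$ inherited from $C(M,M)$. The flow $\varphi^{X}:\mathbb{R}\times M\to M$ is smooth, in particular continuous. Since $M$ is a smooth manifold, it is locally compact Hausdorff, so the standard exponential law in general topology applies: a continuous map $F:\mathbb{R}\times M\to M$ induces a continuous adjoint $\widehat{F}:\mathbb{R}\to C(M,M)$, where $C(M,M)$ carries the compact-open topology. Applied to $F=\varphi^{X}$, this gives continuity of $t\mapsto \varphi_{t}^{X}$ into $C(M,M)$. Because the topology on $G$ is by hypothesis the subspace topology inherited from $C(M,M)$, the corestriction $c:\mathbb{R}\to G$ is continuous, as required.

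Putting the pieces together completes the proof. The only nontrivial ingredient beyond Lemma \ref{nknenfkwnkfnk} and the smoothness-from-continuity fact recalled in Remark \ref{nfekwnkenfkwndkn}(a) is the compact-open adjunction argument in the previous paragraph, which I would state explicitly since the compact-open topology on $G$ is the defining feature of the setting.
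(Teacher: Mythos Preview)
Your proof is correct and follows the same overall strategy as the paper: define the homomorphism $c(t)=\varphi_{t}^{X}$, establish its continuity for the compact-open topology, upgrade to smoothness via Remark \ref{nfekwnkenfkwndkn}(a), and conclude via Lemma \ref{nknenfkwnkfnk}. The only difference is in the continuity step: the paper verifies directly that $c^{-1}(W(K,U))$ is open via a sequential compactness argument, whereas you invoke the exponential/adjunction law for the compact-open topology---a cleaner packaging of the same compactness idea (and in fact the direction you need, continuous $F:\mathbb{R}\times M\to M \Rightarrow$ continuous $\widehat{F}:\mathbb{R}\to C(M,M)$, holds without any local-compactness hypothesis on $M$).
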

\begin{proof}
	Let $X$	be a vector field on $M$. 

	\noindent $(1)\Rightarrow (2)$ If $X=\xi_{M}$ for some $\xi\in \mathfrak{g}$, then $X$ is complete 
	(since fundamental vector fields are complete) and by Lemma \ref{nknenfkwnkfnk}, 
	$\varphi^{X}_{t}=\textup{exp}(t\xi)\in G$ for all $t\in \mathbb{R}$. 

	\noindent $(2)\Rightarrow (1)$ Suppose that $X$ is complete and 
	$\varphi_{t}^{X}\in G$ for all $t\in \mathbb{R}$. Thus the map $\alpha_{X}:\mathbb{R}\to G$, $t\mapsto 
	\varphi_{t}^{X}$ is well-defined. Note that $\alpha_{X}$ is a group homomorphism, 
	with $\mathbb{R}$ regarded as a Lie group under addition. We claim that $\alpha_{X}$ is continuous. 
	To see this, recall that the family of sets of the form $W(K,U)=\{\varphi\in G\,\,\vert\,\,\varphi(K)\subseteq U\}$, with 
	$K\subset M$ compact and $U\subset M$ open, is a subbases for the compact-open topology on $G$. 
	Thus it suffices to show that $\alpha_{X}^{-1}(W(K,U))$ is open in $\mathbb{R}$ for every compact $K\subset M$ and every 
	open set $U\subset M$. So fix one such $W(K,U)$. If $\alpha_{X}^{-1}(W(K,U))=\emptyset$, there is nothing to prove. 
	Suppose $\alpha_{X}^{-1}(W(K,U))\neq \emptyset$. If $\alpha_{X}^{-1}(W(K,U))$ were not open, there would be a 
	real number $t\in \alpha_{X}^{-1}(W(K,U))$ and a 
	sequence $(t_{n})_{n\in \mathbb{N}}$ of real numbers $t_{n}$ such that $t_{n}\to t$ 
	and $t_{n}\not\in \alpha_{X}^{-1}(W(K,U))$ for all $n\in \mathbb{N}$. 
	By definition of $\alpha_{X}$ and $W(K,U)$, this means that 
	$\varphi_{t}^{X}(K)\subset U$ and $\varphi_{t_{n}}^{X}(K)\not\subseteq U$ for all $n\in \mathbb{N}$. 
	In particular, for every $n\in \mathbb{N}$, there is 
	$k_{n}\in K$ such that $\varphi_{t_{n}}^{X}(k_{n})\not\in U$. Because $K$ is compact, we 
	can assume without loss of generality that $k_{n}\to k$ for some $k\in K$. Then 
	$\varphi_{t_{n}}^{X}(k_{n})=\varphi^{X}(t_{n},k_{n})\to 
	\varphi^{X}(t,k)\in \varphi_{t}^{X}(K)\subseteq U$. Since $U$ is open, this implies that 
	there exists $N\in \mathbb{N}$ such that $\varphi_{t_{n}}^{X}(k_{n})\in U$ 
	for all $n\geq N$, a contradiction. This conclude the proof of the claim. 

	It follows from the claim that $\alpha_{X}$ is a continuous group homomorphism between Lie groups. 
	This implies that $\alpha_{X}$ is smooth (see Remark \ref{nfekwnkenfkwndkn}), which in turn implies that 
	$\alpha_{X}$ is a one-parameter subgroup of $G$. Thus there exists $\xi\in \mathfrak{g}$ 
	such that $\alpha_{X}(t)=\textup{exp}(t\xi)$ for all $t\in \mathbb{R}$. 
	It follows, in view of Lemma \ref{nknenfkwnkfnk}, that $\varphi_{t}^{X}=\varphi_{t}^{\xi_{M}}$ for all $t\in \mathbb{R}$, 
	which implies $X=\xi_{M}$.  
\end{proof}

	Combining the results of Lemmas \ref{nfdnkneknwkdnfkn} and \ref{nfekwnkenknkndk}, we obtain the following result.
\begin{proposition}\label{necknknknfsknfk}
	Let $G\subset \textup{Diff}(M)$ be as in Setting \ref{ncekwndeknknk}. Let $A_{G}$ be the set of 
	complete vector fields $X$ on $M$ satisfying 
	$\varphi_{t}^{X}\in G$ for all $t\in \mathbb{R}$. 
	\begin{enumerate}[(1)]
		\item $A_{G}$ endowed with minus the usual Lie bracket of vector fields is a finite-dimensional Lie algebra. 
		\item The map $\mathfrak{g}\to A_{G}$, $\xi \mapsto \xi_{M}$, is a Lie algebra isomorphism, where $A_{G}$ is endowed with 
			minus the usual Lie bracket of vector fields.
		\item Let $X\in A_{G}$. Under the identification of Lie algebras 
			$\mathfrak{g}\cong A_{G}$, we have $\exp(tX)=\varphi_{t}^{X}$ for all $t\in \mathbb{R}$.  
	\end{enumerate}
\end{proposition}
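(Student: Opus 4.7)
The plan is to assemble the three conclusions directly from the lemmas that have already been established, so essentially no new computation is required. The key observation is that Lemma \ref{nfekwnkenknkndk} gives an explicit description of $A_G$: a vector field $X$ on $M$ lies in $A_G$ if and only if there exists $\xi\in\mathfrak{g}$ with $X=\xi_M$. In other words, $A_G$ coincides with the image of the map $\xi\mapsto\xi_M$.

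First I would dispose of (2). By Remark \ref{nfekwnkenfkwndkn}(b), the natural action of $G$ on $M$ is effective, hence Lemma \ref{nfdnkneknwkdnfkn} applies and tells us that the map $\mathfrak{g}\to\{\xi_M\,\vert\,\xi\in\mathfrak{g}\}$, $\xi\mapsto\xi_M$, is a Lie algebra isomorphism when the codomain is endowed with minus the usual Lie bracket of vector fields. By Lemma \ref{nfekwnkenknkndk}, this codomain is exactly $A_G$, which gives (2).

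Next I would deduce (1) from (2): since $\mathfrak{g}=\textup{Lie}(G)$ is finite-dimensional, the isomorphism in (2) transports the Lie algebra structure of $\mathfrak{g}$ to $A_G$, so $A_G$ is a finite-dimensional Lie algebra. Note that (2) also implicitly certifies that $A_G$ is closed under minus the Lie bracket of vector fields, since the bracket of two fundamental vector fields is again a fundamental vector field in view of \eqref{nfeknwkefnkwnk}.

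Finally, for (3), let $X\in A_G$ and write $X=\xi_M$ with $\xi\in\mathfrak{g}$ (uniquely determined by (2)). Lemma \ref{nknenfkwnkfnk} gives $\textup{exp}(t\xi)=\varphi_t^{\xi_M}=\varphi_t^X$ for all $t\in\mathbb{R}$, which is exactly the desired formula once $\xi$ is identified with $X$ via the isomorphism of (2). There is no real obstacle in this proof; the statement is essentially a packaging of Lemmas \ref{nfdnkneknwkdnfkn}, \ref{nknenfkwnkfnk} and \ref{nfekwnkenknkndk}, and the only point to be careful about is keeping track of the sign convention (minus the usual bracket) inherited from the fact that $\xi\mapsto\xi_M$ is an anti-homomorphism rather than a homomorphism of Lie algebras.
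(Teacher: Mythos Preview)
Your proposal is correct and matches the paper's approach exactly: the paper simply states that the proposition follows by combining Lemmas \ref{nfdnkneknwkdnfkn} and \ref{nfekwnkenknkndk}, and your write-up spells out precisely this combination (together with Lemma \ref{nknenfkwnkfnk} for part (3), which is implicit). There is nothing to add or correct.
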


	Very often, it is possible to identify $G$ with a ``concrete Lie group", as the following result shows.

\begin{lemma}[\textbf{Abstract versus concrete transformation groups}]\label{nknkdnknskndknk}
	Let $G\subset \textup{Diff}(M)$ be as in Setting \ref{ncekwndeknknk}. Let $K$ be a compact Lie group 
	and $\Phi:K\times M\to M$ an effective Lie group action such that $\Phi_{k}\in G$ for all $k\in K$. 
	Let $K'=\{\Phi_{k}\,\,\vert\,\,k\in K\}.$ 
	\begin{enumerate}[(i)]
		\item $K'$ is topologically closed in $G$ (thus $K'$ is a closed Lie subgroup of $G$).
		\item If any of the following holds, then the map $K\to K'$, $k\mapsto \Phi_{k}$ is a Lie group 
			isomorphism, where $K'$ is regarded as a closed Lie subgroup of $G$.
			\begin{enumerate}[(a)]
			\item $G$ is compact.
			\item There exists a $G$-invariant Riemannian metric $g$ on $M$. 			
			\end{enumerate}
	\end{enumerate}
\end{lemma}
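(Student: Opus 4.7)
The plan is to reduce the whole lemma to establishing the continuity of the evaluation homomorphism $\rho\colon K\to G$, $k\mapsto\Phi_{k}$, with respect to the compact-open topology on $G$. Once that continuity is secured, both parts will follow quickly from the compactness of $K$, Cartan's closed subgroup theorem, and the automatic smoothness of continuous homomorphisms between Lie groups (Remark~\ref{nfekwnkenfkwndkn}(a)).

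For part~(i), the first step is to prove $\rho$ is continuous. This is the standard adjunction between the compact-open topology and the product topology: given a subbasic open set $W(A,U)=\{g\in G\,:\,g(A)\subseteq U\}$ with $A\subset M$ compact and $U\subset M$ open, one shows that $\rho^{-1}(W(A,U))$ is open in $K$ by the usual tube-lemma argument. For each $a\in A$ with $\Phi(k_{0},a)\in U$, continuity of $\Phi$ supplies a product neighborhood $V_{a}\times W_{a}$ of $(k_{0},a)$ mapped into $U$; compactness of $A$ yields a finite subcover by the $W_{a}$'s, and the intersection of the corresponding $V_{a}$'s is a neighborhood of $k_{0}$ contained in $\rho^{-1}(W(A,U))$. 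With $\rho$ continuous, $K'=\rho(K)$ is compact in $G$, hence closed (since $G$ is Hausdorff as a Lie group); Cartan's closed subgroup theorem then promotes $K'$ to a closed Lie subgroup of $G$.

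For part~(ii), the map $\rho\colon K\to K'$ is surjective by construction and injective because $\Phi$ is effective: if $\Phi_{k}=\textup{Id}_{M}$, then $k$ lies in every isotropy subgroup of $\Phi$, forcing $k=e$. Since $K$ is compact and $K'$ is Hausdorff in the subspace topology inherited from $G$, the continuous bijection $\rho$ is automatically a homeomorphism (closed image of closed, hence compact, subsets). Invoking Remark~\ref{nfekwnkenfkwndkn}(a), $\rho$ is smooth as a continuous homomorphism of Lie groups, and by uniqueness of the Lie group structure on a topological group, the smooth structure transported from $K$ matches the closed-subgroup structure on $K'$, so $\rho$ is a Lie group isomorphism. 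The role of the hypotheses (a) and (b) is to supply the ambient rigidity that makes this matching automatic: under (a) the compactness of $G$ makes every step routine, while under (b) Myers--Steenrod embeds $G$ as a closed subgroup of the finite-dimensional isometry group of $(M,g)$, guaranteeing that $K'$ inherits the expected manifold structure.

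The main obstacle is the continuity of $\rho$ in the first step, since this is the only place where the precise definition of the compact-open topology has to be confronted directly; beyond that, everything reduces to formal manipulation of standard Lie-theoretic results (the compact-to-Hausdorff homeomorphism criterion, Cartan's theorem, and automatic smoothness of continuous homomorphisms).
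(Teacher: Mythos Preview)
Your proof is correct, and in fact slightly stronger than the paper's, but the route is genuinely different.

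For (i), the paper does \emph{not} prove that $\rho:K\to G$ is continuous. Instead it argues directly that $K'$ is sequentially closed: given $\Phi_{k_n}\to\psi$ in $G$, compactness of $K$ lets one pass to $k_n\to k$, and continuity of $\Phi$ forces $\psi=\Phi_k$. Your tube-lemma argument establishes continuity of $\rho$ outright, and then closedness of $K'$ drops out of ``compact image in Hausdorff space''. For (ii), the paper reduces (a) to (b) by averaging, and then proves continuity of $\rho$ by upgrading \emph{pointwise} convergence $\Phi_{k_n}(p)\to\Phi_k(p)$ to compact-open convergence via the Riemannian fact that pointwise convergence of isometries is automatically uniform on compacta (Kobayashi--Nomizu, Helgason). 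Your argument bypasses this: once $\rho$ is known to be continuous from part~(i), the compact-to-Hausdorff bijection criterion plus automatic smoothness finishes the job with no appeal to any metric.

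The payoff is that your argument actually shows (ii) holds \emph{without} assuming (a) or (b); those hypotheses are superfluous. Your closing paragraph about ``the role of the hypotheses (a) and (b)'' is therefore misplaced: you have not used them anywhere, and the vague talk of ``ambient rigidity'' and ``Myers--Steenrod embeds $G$'' is not doing any logical work in your proof. It would be cleaner to simply observe that your argument makes the extra hypotheses unnecessary, rather than try to retrofit a role for them.
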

\begin{proof}
	(i) Let $(k_{n})_{n\in \mathbb{N}}$ be a sequence of points $k_{n}\in K$ such that $(\Phi_{k_{n}})_{n\in \mathbb{N}}$ 
	converges to some $\psi$ in $G$ with respect to the topology of 
	compact convergence. Because compact convergence implies pointwise convergence, we have $\Phi_{k_{n}}(p)\to \psi(p)$
	for all $p\in M$. Since $K$ is compact, we may assume without loss of generality that $k_{n}\to k$ for some $k\in K$.
	It follows from this and the continuity of $\Phi$ that $\psi(p)=\Phi_{k}(p)$ for all $p\in M$. Thus 
	$\psi=\Phi_{k}\in K'$. This shows that $K'$ is closed in $G$. 

	(ii) For simplicity, let $\phi:K\to K'$, $k\mapsto \Phi_{k}$. If $G$ is compact, then one can construct a $G$-invariant Riemannian 
	metric $g$ on $M$ by standard averaging method (see \cite[Corollary B.35]{Guillemin}), 
	so it suffices to assume (b). Cleary $\phi$ is an isomorphism of groups. 
	Let $(k_{n})_{n\in \mathbb{N}}$ be a sequence of points $k_{n}\in K$ such that 
	$k_{n}\to k$ for some $k\in K$. By continuity of $\Phi$, we have $\Phi_{k_{n}}(p)\to \Phi_{k}(p)$ for all $p\in M$, 
	which means that $(\Phi_{k_{n}})_{n\in \mathbb{N}}$ converges pointwise to $\Phi_{k}$. 
	It is well-known that if $(\varphi_{n})_{n\in \mathbb{N}}$ is a sequence of isometries of a Riemannian manifold 
	that converges pointwise to an isometry $\varphi$, then $(\varphi_{n})_{n\in \mathbb{N}}$ converges to $\varphi$ for the
	compact-open topology (see \cite{Kobayashi-Nomizu}, Lemma 5, Chapter 1 and Theorem 3.10, Chapter 4, or \cite{helgason}, 
	Chapter IV, Lemma 2.4). Since $\Phi_{k_{n}}$ and $\Phi_{k}$ are isometries by (b), it follows that 
	$(\Phi_{k_{n}})_{n\in \mathbb{N}}$ converges to $\Phi_{k}$ with respect to the compact-open topology. This shows that 
	$\phi$ is continuous. Since $\phi$ is a continuous group homomorphism between Lie groups, it is smooth. Finally, it 
	is well-known that a Lie group homomorphism is a Lie group isomorphism if and only if 
	it is bijective (see \cite{Lee}, Corollary 7.6). Since $\phi$ is bijective, it is a Lie group isomorphism. 
\end{proof}

\begin{example}
	Let $S^{2}\subset \mathbb{R}^{3}$ be the unite sphere endowed with the round metric induced 
	from $\mathbb{R}^{3}$, and let $\textup{Isom}(S^{2})$ be the isometry group of
	$S^{2}$ endowed with the compact-open topology. Then $\textup{Isom}(S^{2})$ is a Lie group satisfying the conditions of 
	Setting \ref{ncekwndeknknk} (this follows from the theorem of Myers and Steenrod, see below). 
	On the other hand, the natural action of the orthogonal group $O(3)$ on $S^{2}$ induces a group isomorphism 
	\begin{eqnarray}\label{newnewknefknk}
		O(3)\to \textup{Isom}(S^{2}). 
	\end{eqnarray}
	It follows from Propositon \ref{nknkdnknskndknk} that \eqref{newnewknefknk} is actually a Lie group isomorphism.
\end{example}


	We now turn our attention to the isometry group of a Riemannian manifold. 
	Let $(M,g)$ be a Riemannian manifold. An isometry of $(M,g)$ is a 
	smooth diffeomorphism $\varphi:M\to M$ satisfying $\varphi^{*}g=g$. 
	We will denote by $\textup{Isom}(M)$ the group of isometries of $M$. 

	The following result is due to Myers and Steenrod \cite{Myers} (see also \cite{Kobayashi-transformation} 
	or \cite{Kobayashi-Nomizu} for a modern proof). 
\begin{theorem}\label{dkfjkejfkegjkr}
	Let $M$ be a connected Riemannian manifold. Then 
	$\textup{Isom}(M)$ is a Lie group with respect to the compact-open 
	topology, whose natural action on $M$ is smooth. If $M$ is compact, then $\textup{Isom}(M)$ 
	is also compact.
\end{theorem}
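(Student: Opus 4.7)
The plan is to prove the Myers--Steenrod theorem by decomposing the statement into three pieces: an analytic part (equicontinuity and compactness of families of isometries), a rigidity part (an isometry is uniquely determined by its 1-jet at a point), and a differential-geometric part (realizing $\textup{Isom}(M)$ as a closed embedded submanifold of the orthonormal frame bundle and transferring the smooth structure back).

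First I would observe that every isometry preserves the Riemannian distance $d_g$ and is therefore $1$-Lipschitz, so any family $\mathcal{F} \subseteq \textup{Isom}(M)$ is automatically equicontinuous. Combined with a standard Arzel\`a--Ascoli argument, this shows that a sequence $(\varphi_n)$ in $\textup{Isom}(M)$ with $\{\varphi_n(p_0)\}$ relatively compact in $M$ admits a subsequence converging in the compact-open topology to a map $\varphi$; since pointwise distance identities pass to limits, $\varphi$ is a bijective distance-preserving map, and a classical theorem (the hard analytic half of Myers--Steenrod) shows that such maps are automatically smooth isometries. If $M$ is compact then orbits are automatically relatively compact, and this immediately yields compactness of $\textup{Isom}(M)$.

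Next I would prove rigidity: if $\varphi,\psi \in \textup{Isom}(M)$ satisfy $\varphi(p) = \psi(p)$ and $d\varphi_p = d\psi_p$ at some $p \in M$, then $\varphi = \psi$. The key input is that isometries intertwine the exponential map, $\varphi \circ \exp_p = \exp_{\varphi(p)} \circ d\varphi_p$, which forces $\varphi = \psi$ on a normal neighborhood of $p$; connectedness of $M$ then propagates equality everywhere. Fixing $p_0 \in M$ and an orthonormal frame $e_0$ at $p_0$, I would introduce the evaluation map
\begin{equation*}
\textup{ev} : \textup{Isom}(M) \to F(M), \qquad \varphi \mapsto \bigl(\varphi(p_0),\, d\varphi_{p_0}(e_0)\bigr),
\end{equation*}
into the orthonormal frame bundle. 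Rigidity makes $\textup{ev}$ injective, and the Arzel\`a--Ascoli step makes it a homeomorphism onto its image with the induced topology.

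Finally, to produce the Lie group structure, I would show that $\textup{ev}(\textup{Isom}(M))$ is a closed embedded submanifold of $F(M)$ and pull the smooth structure back through $\textup{ev}^{-1}$. Smoothness of the group multiplication, inversion, and the natural action $\textup{Isom}(M) \times M \to M$ then reduces to smoothness of composition of maps defined in local charts via the exponential map. The main obstacle is precisely the submanifold assertion: one must identify the tangent space to $\textup{ev}(\textup{Isom}(M))$ at a point with the (finite-dimensional) space of Killing vector fields on $M$ and show that this identification arises as the level set of smooth equations of locally constant rank. This is the technical heart of the argument, and it is where the differential-geometric rigidity of Killing fields -- completeness and unique determination by a 1-jet -- gets used in earnest, ultimately identifying $\textup{Lie}(\textup{Isom}(M))$ with the Lie algebra of complete Killing vector fields on $M$.
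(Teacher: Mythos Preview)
The paper does not prove this theorem at all: it states the result as due to Myers and Steenrod and refers the reader to \cite{Myers}, \cite{Kobayashi-transformation}, and \cite{Kobayashi-Nomizu} for proofs. So there is no proof in the paper to compare your proposal against.

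That said, your outline is a faithful sketch of the classical argument as presented, for instance, in Kobayashi's \emph{Transformation Groups in Differential Geometry} or in Kobayashi--Nomizu. The three ingredients you isolate (equicontinuity plus Arzel\`a--Ascoli, rigidity via the 1-jet at a point, and the embedding into the orthonormal frame bundle) are exactly the standard ones. One small caution: in the final step you describe the image $\textup{ev}(\textup{Isom}(M))\subset F(M)$ as ``the level set of smooth equations of locally constant rank,'' which is not quite how the literature proceeds. The cleaner route is to observe that the action of $\textup{Isom}(M)$ on $F(M)$ is free (by rigidity) and proper (by the Arzel\`a--Ascoli step), so that the orbit through $e_0$ is a closed embedded submanifold; alternatively one invokes the Bochner--Montgomery theorem on locally compact effective transformation groups. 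Either way the conclusion is the same, and your identification of $\textup{Lie}(\textup{Isom}(M))$ with the complete Killing fields is correct and is precisely what the paper records in Remark~\ref{memdnrfkefneknk}.
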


%
\begin{remark}\label{memdnrfkefneknk}
	\textup{}
	\begin{enumerate}[(a)]
		\item $\textup{Isom}(M)$ satisfies the conditions of Setting \ref{ncekwndeknknk}. 
		\item By Proposition \ref{necknknknfsknfk}, the Lie algebra of 
			$\textup{Isom}(M)$ can be identified with the set of complete vector fields $X$ on $M$ 
			satisfying $\varphi_{t}^{X}\in \textup{Isom}(M)$ for all $t\in \mathbb{R}$, where the 
			Lie bracket is minus the usual Lie bracket of vector fields. 
	\end{enumerate}
\end{remark}

	Recall that a vector field $X$ on a Riemannian manifold $(M,g)$ is said to be \textit{Killing} if its flow consists of 
	isometries. We will use the following notation.
	\begin{itemize}
		\item $\mathfrak{i}(M)$ is the vector space of Killing vector fields on $M$.
		\item $\mathfrak{i}_{0}(M)$ is the set of Killing vector fields on $M$ that are complete. 
	\end{itemize}

	Thus we will identify the Lie algebra of $\textup{Isom}(M)$ with $\mathfrak{i}_{0}(M)$, where $\mathfrak{i}_{0}(M)$ 
	is endowed with minus the usual Lie 
	bracket of vector fields. Under this identification, the exponential map 
	is the map $\mathfrak{i}_{0}(M)\to \textup{Isom}(M)$, $X\mapsto \varphi^{X}_{1}$. 

\begin{remark}\label{nfekdwnkenkfnkn}
	On a complete Riemannian manifold $(M,g)$, Killing vector fields are complete, 
	that is, $\mathfrak{i}_{0}(M)=\mathfrak{i}(M)$ (see \cite{Kobayashi-Nomizu}). 
\end{remark}

	We now focus our attention on K\"{a}hler manifolds. Let $X$ and $Y$ be complex manifolds. 
	We denote by $C(X,Y)$ the set of continuous maps between $X$ and $Y$, endowed with the compact-open topology.
	Let $\mathscr{O}(X,Y)$ be the set of holomorphic maps from $X$ to $Y$. 

\begin{lemma}\label{lkdnknfeknkn}
	Let $(f_{n})_{n\in \mathbb{N}}$ a sequence of holomorphic maps 
	$f_{n}:X\to Y$. Suppose that there is a map $f:X\to Y$ such that 
	$(f_{n})_{n\in \mathbb{N}}$ converges uniformly to $f$ on every compact 
	subset of $X$. Then $f$ is holomorphic. Therefore $\mathscr{O}(X,Y)$ is closed in $C(X,Y)$ 
	with respect to the compact-open topology.
\end{lemma}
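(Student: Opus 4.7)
The plan is to reduce the statement to the classical result that a uniform-on-compacts limit of holomorphic functions between Euclidean open sets is holomorphic, and then derive the closure assertion as a formal consequence.

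The question of whether $f$ is holomorphic is local in both source and target, so I would fix $p\in X$ and aim to show holomorphicity on a neighborhood of $p$. Pick a holomorphic chart $(V,\psi)$ on $Y$ around $f(p)$ with $\psi(V)$ an open subset of $\mathbb{C}^m$. By continuity of $f$, one can choose a compact polydisc-type neighborhood $K$ of $p$ (inside some holomorphic chart $(U,\varphi)$ on $X$ with $\varphi(U)\subseteq \mathbb{C}^n$) such that $f(K)\subseteq V$. Because compact convergence in the compact-open topology implies uniform convergence on compacts (in any chart metric on $V$), there is $N$ such that for $n\ge N$ one has $f_n(K)\subseteq V$ and $\psi\circ f_n|_K\to \psi\circ f|_K$ uniformly on $K$. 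In this way the problem reduces to showing: if $g_n:\Omega\to\mathbb{C}^m$ are holomorphic on an open set $\Omega\subseteq\mathbb{C}^n$ and converge uniformly on compact subsets to a map $g:\Omega\to\mathbb{C}^m$, then $g$ is holomorphic.

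Working coordinate by coordinate, it is enough to treat the scalar case $m=1$. Pick a closed polydisc $\overline{D}\subset\Omega$ with distinguished boundary (torus) $T$. For every $z$ in the open polydisc $D$, the Cauchy integral formula in several variables gives
\begin{equation*}
g_n(z)=\frac{1}{(2\pi i)^n}\int_{T}\frac{g_n(w)}{(w_1-z_1)\cdots(w_n-z_n)}\,dw_1\cdots dw_n.
\end{equation*}
Since $g_n\to g$ uniformly on the compact set $T$, and the kernel $[(w_1-z_1)\cdots(w_n-z_n)]^{-1}$ is bounded on $T$ for each $z\in D$ (uniformly in $z$ over compact subsets of $D$), one may interchange limit and integral to obtain that $g$ satisfies the same Cauchy integral formula on $D$. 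The right-hand side is manifestly holomorphic in $z$ (differentiate under the integral sign), so $g$ is holomorphic on $D$, hence on $\Omega$. Composing back through the charts, $\psi\circ f\circ\varphi^{-1}$ is holomorphic near $\varphi(p)$, which shows that $f$ is holomorphic near $p$; since $p$ was arbitrary, $f\in\mathscr{O}(X,Y)$.

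For the closure statement, it suffices to observe that $Y$, being a complex manifold, is locally metrizable; convergence in the compact-open topology on $C(X,Y)$ coincides on each relatively compact chart neighborhood with uniform convergence on compacts with respect to any chart metric. Consequently, if a net (or sequence, in the first countable setting) $f_\alpha\in\mathscr{O}(X,Y)$ converges in $C(X,Y)$ to some $f$, the first part applies locally and gives $f\in\mathscr{O}(X,Y)$. The only subtle point is justifying the passage to limits inside the Cauchy integral, and this is routine because uniform convergence on the compact distinguished boundary is given by hypothesis. There is no serious obstacle; the argument is entirely local and classical.
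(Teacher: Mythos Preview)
Your proof is correct and follows the same approach as the paper's sketch: reduce via charts to the classical fact that a uniform-on-compacts limit of holomorphic functions on an open subset of $\mathbb{C}^n$ is holomorphic. The paper simply cites this Euclidean result (H\"ormander, Corollary~2.2.4), whereas you supply the Cauchy integral argument explicitly, but the strategy is identical.
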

\begin{proof}[Sketch of proof]
	This follows from the following classical result (see \cite[Corollary 2.2.4]{hormander}): 
	if $(f_{n})_{n\in\mathbb{N}}$ is a sequence of holomorphic functions $f_{n}:U\to \mathbb{C}$ defined 
	on an open set $U\subseteq \mathbb{C}^{m}$ that converges uniformly on compact sets to a function 
	$f:U\to \mathbb{C}$, then $f$ is holomorphic on $U$. 
\end{proof}

	Now let $N$ be a K\"{a}hler manifold with K\"{a}hler metric $g$. Let 
	$\textup{Aut}(N)$ be the group of biholomorphisms of $N$ and $\textup{Aut}(N,g)=\textup{Aut}(N)\cap \textup{Isom}(N)$. 
	The following result is an immediate consequence of Lemma \ref{lkdnknfeknkn}.

\begin{proposition}\label{nfekwnknefknk}
	Let $N$ be a connected K\"{a}hler manifold with K\"{a}hler metric $g$. Then $\textup{Aut}(N,g)$ is 
	closed in $\textup{Isom}(N)$ with respect to the compact-open topology. Therefore $\textup{Aut}(N,g)$ is a closed Lie subgroup of 
	$\textup{Isom}(N)$ whose natural action on $N$ is smooth. 
\end{proposition}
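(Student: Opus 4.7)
The plan is to deduce both assertions from the two ingredients already in place: Lemma \ref{lkdnknfeknkn} (the space of holomorphic maps $\mathscr{O}(N,N)$ is closed in $C(N,N)$ for the compact-open topology) and Theorem \ref{dkfjkejfkegjkr} (Myers--Steenrod) together with Cartan's closed subgroup theorem. The main point to establish is that $\textup{Aut}(N,g)$ is topologically closed in $\textup{Isom}(N)$; everything else is a routine consequence.

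To prove closedness, I would take a sequence $(\phi_{n})_{n\in \mathbb{N}}$ of elements of $\textup{Aut}(N,g)$ converging in the compact-open topology to some $\phi\in \textup{Isom}(N)$. Since the topology on $\textup{Isom}(N)$ is the subspace topology inherited from $C(N,N)$, the sequence $(\phi_{n})$ also converges to $\phi$ in $C(N,N)$. Each $\phi_{n}$ lies in $\mathscr{O}(N,N)$, so by Lemma \ref{lkdnknfeknkn} the limit $\phi$ is holomorphic. Being an isometry, $\phi$ is in particular a smooth diffeomorphism. Since $\phi_{*}$ is a linear isomorphism commuting with the complex structure $J$ at every point, its inverse $(\phi_{*})^{-1}=(\phi^{-1})_{*}$ also commutes with $J$; therefore $\phi^{-1}$ is holomorphic. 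Thus $\phi$ is a biholomorphism and an isometry, i.e.\ $\phi\in \textup{Aut}(N,g)$, which proves that $\textup{Aut}(N,g)$ is closed in $\textup{Isom}(N)$.

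From this, the remaining conclusions follow immediately. By the closed subgroup theorem applied to the Lie group $\textup{Isom}(N)$ (which exists and acts smoothly on $N$ by Theorem \ref{dkfjkejfkegjkr}), $\textup{Aut}(N,g)$ is a closed embedded Lie subgroup of $\textup{Isom}(N)$. The natural action of $\textup{Aut}(N,g)$ on $N$ is then obtained by restricting the smooth action $\textup{Isom}(N)\times N\to N$ to the embedded submanifold $\textup{Aut}(N,g)\times N$, and so it is smooth.

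I do not expect any serious obstacle: the only nontrivial step is checking that the limit of a convergent sequence of biholomorphic isometries remains a biholomorphism, and this is handled cleanly by combining Lemma \ref{lkdnknfeknkn} (for holomorphicity of the limit) with the elementary observation that the inverse of a holomorphic diffeomorphism is automatically holomorphic. One has to be mildly careful to note that the compact-open topology on $\textup{Isom}(N)$ agrees with the one induced from $C(N,N)$, so that Lemma \ref{lkdnknfeknkn} is directly applicable.
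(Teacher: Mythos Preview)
Your proposal is correct and follows essentially the same approach as the paper: the paper states that the proposition is ``an immediate consequence of Lemma \ref{lkdnknfeknkn}'', and your argument unpacks precisely this---using that lemma to ensure the limit is holomorphic, then noting that a holomorphic isometry is automatically a biholomorphic isometry. Your use of sequences is justified since $\textup{Isom}(N)$ is a Lie group (hence first-countable), though one could equally well write $\textup{Aut}(N,g)=\textup{Isom}(N)\cap \mathscr{O}(N,N)$ and conclude directly from the closedness of $\mathscr{O}(N,N)$ in $C(N,N)$.
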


	In view of Remark \ref{memdnrfkefneknk}, the Lie algebra of $\textup{Aut}(N,g)$ is the set of complete Killing vector fields $X$ on $M$ 
	whose flows consist of holomorphic transformations, and the corresponding Lie bracket is minus the usual Lie bracket for vector fields.
	Thus, if $J$ denotes the complex structure of $N$, we have 
	\begin{eqnarray*}
		\textup{Lie}\big(\textup{Aut}(N,g)\big)=\{X\in \mathfrak{i}_{0}(M)\,\,\vert\,\,\mathscr{L}_{X}J=0\},
	\end{eqnarray*}
	(here we use $\mathscr{L}_{X}$ to denote the Lie derivative in the direction of $X$).

	When $N$ is compact, it can be proven that every Killing vector field $X$ on $N$ satisfies $\mathscr{L}_{X}J=0$ 
	(see \cite{moroianu_2007}, Lemma 8.7 and Proposition 15.5), which implies that the Lie algebras of 
	$\textup{Aut}(N,g)$ and $\textup{Isom}(N)$ coincide. It follows immediately that 

\begin{proposition}\label{jefknkefknk}
	In Proposition \ref{nfekwnknefknk}, if $N$ is compact, then the identity components of $\textup{Aut}(N,g)$ and 
	$\textup{Isom}(N)$ are equal, that is, $\textup{Aut}(N,g)^{0}=\textup{Isom}(N)^{0}$. 
\end{proposition}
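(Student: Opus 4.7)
The plan is to reduce the statement to an equality of Lie algebras. By Proposition \ref{nfekwnknefknk}, $\textup{Aut}(N,g)$ is a closed Lie subgroup of $\textup{Isom}(N)$, so $\textup{Aut}(N,g)^{0}$ is a closed connected Lie subgroup of $\textup{Isom}(N)^{0}$. Two closed connected subgroups of the same Lie group that share a Lie algebra must coincide; therefore it suffices to show
\begin{eqnarray*}
	\textup{Lie}\big(\textup{Aut}(N,g)\big) = \textup{Lie}\big(\textup{Isom}(N)\big),
\end{eqnarray*}
both being regarded as subalgebras of the vector fields on $N$.

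Now I would identify the two sides explicitly. Since $N$ is compact, it is complete, so Remark \ref{nfekdwnkenkfnkn} gives $\textup{Lie}(\textup{Isom}(N))=\mathfrak{i}_{0}(N)=\mathfrak{i}(N)$, the full space of Killing vector fields. On the other hand, after Proposition \ref{nfekwnknefknk} the excerpt records the identification
\begin{eqnarray*}
	\textup{Lie}\big(\textup{Aut}(N,g)\big)=\{X\in \mathfrak{i}_{0}(N)\,\,\vert\,\,\mathscr{L}_{X}J=0\},
\end{eqnarray*}
where $J$ is the complex structure of $N$. Thus the desired Lie-algebra equality is equivalent to the statement that every Killing vector field on a compact Kähler manifold preserves $J$.

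The latter is the crux of the argument, and it is the point I would import rather than reprove: as cited in the excerpt, Lemma~8.7 and Proposition~15.5 of \cite{moroianu_2007} show that on a compact Kähler manifold $\mathscr{L}_{X}J=0$ for every $X\in \mathfrak{i}(N)$. The proof of that fact rests on the compactness of $N$ (it uses integration by parts together with the Kähler identities to force the $(0,1)$-part of $\nabla X$ to vanish), and it is really the only analytic ingredient of the present statement; everything else is bookkeeping with Lie-subgroup generalities.

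Putting the pieces together: the cited result yields $\{X\in \mathfrak{i}(N)\,\vert\,\mathscr{L}_{X}J=0\}=\mathfrak{i}(N)$, hence the two Lie algebras agree, and therefore $\textup{Aut}(N,g)^{0}=\textup{Isom}(N)^{0}$ as required. The main obstacle, as indicated, is not any step of the argument above but the invocation of the Kähler--compactness lemma $\mathscr{L}_{X}J=0$; the remainder is a direct consequence of the Lie group generalities already developed in the appendix.
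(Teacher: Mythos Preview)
Your proof is correct and follows essentially the same approach as the paper: reduce to the equality of Lie algebras via the cited result from \cite{moroianu_2007} that every Killing vector field on a compact K\"{a}hler manifold preserves $J$, then conclude from connected Lie subgroup uniqueness. The paper presents this more tersely, but the logic is identical.
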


\begin{footnotesize}\bibliography{bibtex}\end{footnotesize}
\end{document}